\documentclass[]{siamart220329}
\usepackage{geometry}

\usepackage{url}
\usepackage{amssymb}
\usepackage{cleveref}

\newsiamremark{remark}{Remark}
\usepackage[utf8]{inputenc}
\usepackage{amsmath} 
\usepackage{amsfonts}
\usepackage{array}
\usepackage[english]{babel}
\usepackage{float}
\newcommand{\enstq}[2]{\left\{#1\mathrel{}\middle|\mathrel{}#2\right\}}
\newcommand{\norm}[1]{\left\|#1\right\|}
\newcommand{\N}{\mathbb{N}}

\newcommand{\R}{\mathbb{R}}
\newcommand{\duality}[2]{\left\langle #1,#2\right\rangle}
\newcommand{\inner}[2]{\left( #1,#2\right)}
\newcommand{\abs}[1]{\left\lvert #1 \right\rvert}

\newcommand{\isdef}{\mathrel{\mathop:}=}

\usepackage{todonotes}
\DeclareMathOperator{\DL}{\textup{DL}}
\DeclareMathOperator{\W}{\textup{W}}
\DeclareMathOperator{\curl}{\mathbf{curl}} 
\let\div\undefined
\DeclareMathOperator{\div}{\textup{div}} 
\makeatletter
\newsavebox{\@brx}
\newcommand{\llangle}[1][]{\savebox{\@brx}{\(\m@th{#1\langle}\)}%
	\mathopen{\copy\@brx\kern-0.5\wd\@brx\usebox{\@brx}}}
\newcommand{\rrangle}[1][]{\savebox{\@brx}{\(\m@th{#1\rangle}\)}%
	\mathclose{\copy\@brx\kern-0.5\wd\@brx\usebox{\@brx}}}
\makeatother
\newcommand{\dduality}[2]{\llangle#1\,, #2\rrangle}
\makeatletter    
\renewcommand*{\vec}[1]{\boldsymbol{#1}}
\makeatother
\usepackage{mathrsfs}
\let\div\undefined
\DeclareMathOperator{\div}{\textup{div}} 
%
%

\newcommand{\vertiii}[1]{{\left\vert\kern-0.25ex\left\vert\kern-0.25ex\left\vert #1 
		\right\vert\kern-0.25ex\right\vert\kern-0.25ex\right\vert}}
\crefname{lemma}{Lemma}{Lemmas}
\crefname{corollary}{Corollary}{Lemmas}

\headers{Hypersingular equation on Multiscreens}{M. Averseng, X. CLaeys and R. Hiptmair}
\title{Boundary Element Methods for the Laplace Hypersingular Integral Equation on Multiscreens: a two-level Substructuring Preconditioner}
\author{Martin Averseng\thanks{Laboratoire Angevin de Recherche Mathématique, UMR 6093 CNRS, 2 Boulevard de Lavoisier, Angers, France.} \and Xavier Claeys\thanks{Laboratoire Jacques-Louis Lions, Université Pierre et Marie Curie, 4 place Jussieu, 75005 Paris, France. \url{https://claeys.pages.math.cnrs.fr/}} \and Ralf Hiptmair\thanks{Seminar For Applied Mathematics, ETH Zurich, Rämistrasse 101, Zurich, Switzerland. \url{https://math.ethz.ch/sam/the-institute/people/ralf-hiptmair.html}}}

\begin{document}
	\maketitle
	
	\begin{abstract}
		We present a preconditioning method for the linear systems arising from the boundary element discretization of the Laplace hypersingular equation on a $2$-dimensional triangulated surface $\Gamma$ in $\R^3$. We allow $\Gamma$ to belong to a large class of geometries that we call polygonal multiscreens, which can be non-manifold. After introducing a new, simple conforming Galerkin discretization, we analyze a substructuring domain-decomposition  preconditioner based on ideas originally developed for the Finite Element Method. The surface $\Gamma$ is subdivided into non-overlapping regions, and the application of the preconditioner is obtained via the solution of the hypersingular equation on each patch, plus a coarse subspace correction. We prove that the condition number of the preconditioned linear system grows poly-logarithmically with $H/h$, the ratio of the coarse mesh and fine mesh size, and our numerical results indicate that this bound is sharp. This domain-decomposition algorithm therefore guarantees significant speedups for iterative solvers, even when a large number of subdomains is used.
		
	\end{abstract}

	\section{Introduction}
		The problem that we study arises in the numerical computation, via the Boundary Element Method (BEM), of the solution $\mathcal{U}$ to the exterior Neumann boundary value problem
		\begin{equation}
			\label{PDE_intro}
			\left\{\begin{array}{rclll}
				\Delta \mathcal{U} &=& 0 && \textup{in } \R^3 \setminus \Gamma\,,\\
				\mathcal{U} &=& O(\norm{x}^{-1}) && \textup{uniformly for $\norm{x} \to \infty$}\,,\\
				\nabla \mathcal{U} \cdot \vec n &=& \vec g \cdot \vec n	&& \textup{on } \Gamma\,.
			\end{array}\right.
		\end{equation}
		Here, $\vec n$ is a normal vector field on $\Gamma$, $\vec g$ is a continuous vector field in $\R^3$, and $\Gamma$ is a ``polygonal multi-screen", that is a $2$-dimensional surface in $\R^3$ made of various flat panels allowed to intersect at non-manifold junction points and lines (a more precise definition of the allowed geometries is given below). An example of such a geometry is displayed in Figure \ref{fig:junction2} (left). The ideas that we present can likely be adapted to other constant-coefficient elliptic partial differential equations (PDEs). To keep the presentation focused, we restrict our analysis to the model problem \eqref{PDE_intro} for the time being.
		
		Both for the continuous and the discrete analysis, the challenge in solving eq.~\eqref{PDE_intro} lies in the singular nature of the geometry on which the boundary condition is imposed. Such singular geometric models occur regularly in engineering applications, see, e.g., \cite{alad2013capacitance,chen1970transmission,fan2020high,glisson1980simple,lenti2003bem,sladek1993nonsingular,zhao2020iterative,bettini2015boundary}.
		
		The first difficulty for the BEM is that, for general polygonal multi-screens $\Gamma$, a reformulation of eq.~\eqref{PDE_intro} as a boundary integral equation involving a coercive bilinear form acting on densities on $\Gamma$ has been analyzed only recently \cite{claeys2013integral}, and a {\em conforming} and {\em converging} Galerkin discretization of this variational problem has remained elusive. So far, all proposed methods involved a non-definite variational form on the finite-dimensional subspaces, and a ``quotient-space" iterative resolution see \cite{claeys2021quotient,cools2022calder,cools2022preconditioners}.

		Secondly, for such irregular surfaces $\Gamma$, reformulations of the PDE \eqref{PDE_intro} as a second-kind integral equation -- which are often preferred to first-kind alternatives due to their inherent good conditioning -- do not seem to be known, and hence, preconditioning becomes a crucial issue. This has been the main focus of the recent works \cite{cools2022calder,cools2022preconditioners} of Cools and Urz\'{u}a-Torres for acoustic and electromagnetic scattering\footnote{It is worth mentioning that the analysis in those references accommodates for a indefinite framework, whereas the present work relies heavily on the positive-definiteness of the bilinear form.}, using the idea of {\em operator preconditioning} \cite{christiansen2000preconditionneurs,hiptmair2006operator,steinbach1998construction}.
		
		The present contribution addresses both difficulties, with a focus on rigorous numerical analysis. The first part of this work describes a reformulation of the PDE \eqref{PDE_intro} into a \emph{coercive variational problem}, and proposes a conforming and converging Galerkin discretization, also covering key aspects of the implementation. The second part is concerned with preconditioning; here we opt for a domain-decomposition strategy. More precisely, we introduce a preconditioner in the form of a \emph{two-level additive Schwarz subspace decomposition via substructuring}. Although these tools were originally developed for Finite Element Methods (FEM) (this was started in \cite{bramble1986construction} by Bramble Pasciak and Schatz, see \cite[Chap. 5]{toselli2004domain} for a comprehensive presentation), their use in BEM has received some attention in the past 30 years, see e.g. \cite{heuer1996additive,tran1996additive,heuer1998multilevel,heuer1998iterative,heuer2001additive,maischak2009multilevel,leydecker2012additive,marchand2020two} and references therein.
		
	 	We generalize this type of methods to multiscreen geometries. Our approach is original concerning the analysis of the splitting of the discretized space of jumps. Instead of relying on ``almost local" properties of the $H^{1/2}$ norm, we harness stability results that are known for volume splittings in FEM, and transfer them to $\Gamma$ by applying the jump operator $[\cdot]_\Gamma$. We show that stability is preserved by this operation under a set of conditions related to the existence of stable extension operators from the trace space back to the volume, see also \cite[Thm 2.2]{hiptmair2012stable}. By checking that these conditions hold, we obtain an upper bound on the condition number of the preconditioned BEM linear system which is polylogarithmic in the ratio of the coarse and fine mesh size, see \Cref{thm:main}. This bound holds for all polygonal multi-screens, even those excluded from the analysis of \cite{cools2022calder} (such as the one represented in \Cref{fig:junction2}).

		\begin{figure}
			\label{fig:junction2}
			\centering
			\raisebox{-0.5\height}{\includegraphics[width=0.25\textwidth]{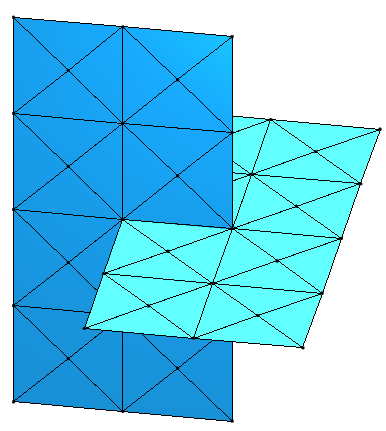}} \quad \raisebox{-0.5\height}{\includegraphics[width=0.35\textwidth]{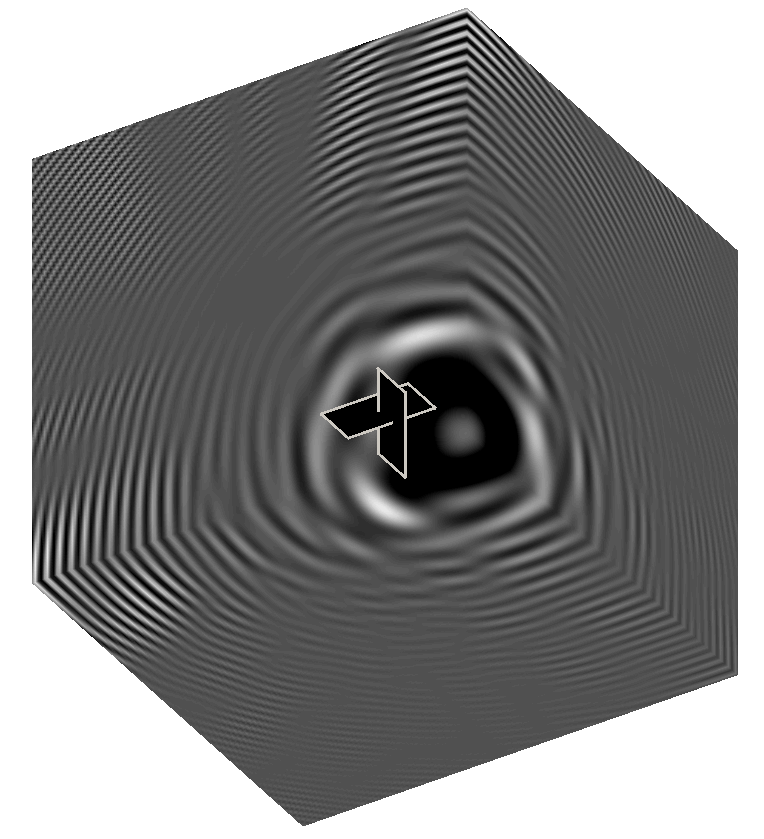}}
			\caption{Example of a 3-dimensional multi-screen (left) and plane-wave scattering by this non-manifold obstacle with incidence along the $(-1,-1,-1)$ direction. This computation has been performed using the Galerkin method described in this paper.}
		\end{figure}
		
		The outline is as follows. We state the main result and illustrate it with numerical experiments in \Cref{sec:main}. In \Cref{sec:HypersingularBIE}, we recast the PDE \eqref{PDE_intro} into a coercive variational problem, and present a conforming Galerkin discretization method in \Cref{sec:GalerkinBEM}. \Cref{sec:quotientSplit} deals with the stability of induced splittings on quotient spaces. We formulate the splitting of the jump space required to define our preconditioner in \Cref{sec:dd} and then prove the condition number estimate. We also collect in the appendix proofs of some useful results previously stated in the multi-screen literature (see \Cref{thm:densityXinfty,thmWweak}).
		
		A full Matlab/C++ prototype of the algorithm described in this paper is freely available and includes the scripts to reproduce the numerical results presented below.\footnote{\url{https://github.com/MartinAverseng/multi-screen-bem3D-ddm}}

	\section{Main result and numerical experiments}
	\label{sec:main}
	
	\subsection{Main result}
	
	We compute the solution $\mathcal{U}$ to \eqref{PDE_intro} as a suitable double-layer potential on $\Gamma$ (see \Cref{def:DL}) where the unknown density $\varphi\in \widetilde{H}^{1/2}([\Gamma])$ is the unique solution of a variational problem of the form
	\begin{equation}
		a(\varphi,\psi) = l_g(\psi) \quad \forall \psi \in \widetilde{H}^{1/2}([\Gamma])\,.
	\end{equation}
	The Hilbert space $\widetilde{H}^{1/2}([\Gamma])$ models Dirichlet jumps across $\Gamma$. Its precise definition is recalled in \Cref{sec:HypersingularBIE}; we will show that the symmetric bilinear form $a$ of \eqref{def:defa} induces an equivalent norm on this space, and that $l_g: \widetilde{H}^{1/2}([\Gamma]) \to \R$ defined by \eqref{defRhs} is a continuous linear form. Introducing a family of nested, shape-regular and quasi-uniform triangulations $(\mathcal{T}_h)_{h>0}$ of $\Gamma$, indexed by an upper bound $h > 0$ on the maximal element diameter, we build an asymptotically dense sequence of subspaces $\widetilde{V}_{h}([\Gamma]) \subset \widetilde{H}^{1/2}([\Gamma])$, which correspond to jumps of continuous piecewise linear functions on $\Gamma$, and define a converging sequence of approximations $\varphi_h$ of $\varphi$ via the Galerkin method
	\begin{equation}
		\label{eq:varfintro}
		a(\varphi_h,\psi_h) = l_g(\psi_h) \quad \forall \psi_h \in \widetilde{V}_{h}([\Gamma])\,.
	\end{equation}
	Given two triangulations, $\mathcal{T}_h,\mathcal{T}_H$, with $h < H$, we define an additive Schwarz preconditioner based on a subspace splitting \cite[Chap. 2]{toselli2004domain}
	\begin{equation}
		\label{schwarz_split_intro}
		\widetilde{V}_h(\Gamma) = \Big(\sum_{\mathcal{F} \textup{ element of } \mathcal{T}_H} \widetilde{V}_{\mathcal{F}} \Big)+ \widetilde{V}_{\mathcal{W}} + \widetilde{V}_H\,.
	\end{equation}
	The definition of the ``face spaces" $\{\widetilde{V}_{\mathcal{F}}\}_{\mathcal{F}}$ and the ``wire-basket" space $\widetilde{V}_{\mathcal{W}}$ is based on a decomposition of the vertex set of $\mathcal{T}_h$ into the vertices lying in the interior of a triangular element $\mathcal{F}$ of $\mathcal{T}_H$, and those lying on edges or vertices of $\mathcal{T}_H$, respectively. In addition, $\widetilde{V}_H := \widetilde{V}_H([\Gamma]) \subset \widetilde{V}_h([\Gamma])$ defines a coarse space for the splitting. The precise definitions of the subspaces are given in \Cref{def:proposedJumpSplit} and a sketch in Figure \ref{fig:sketchSplit} visualizes the elements of the subspaces. Additive Schwarz preconditioning based on this splitting turns the discrete variational problem \eqref{eq:varfintro} into an equation where the operator $P_{\rm ad}: \widetilde{V}_h([\Gamma]) \to \widetilde{V}_h([\Gamma])$ to be evaluated is defined by 
	\[P_{\rm ad}(H;h) \isdef \sum_{\mathcal{F} \textup{ face of } \Gamma} P_{\mathcal{F}} + P_{\mathcal{W}} + P_H\,,\] 
	with $P_X$ the $a(\cdot,\cdot)$ orthogonal projection of $\widetilde{V}_h([\Gamma])$ onto the subspace $\widetilde{V}_X$.  The main result of this paper is the following bound on the spectral condition number $\kappa(P_{\rm ad}(H;h))$ of this operator. 
	\begin{theorem}
		\label{thm:main}
		There exists $C > 0$ such that for all $0 < h < H$, 
		\[\kappa(P_{\rm ad}(H;h)) \leq C \left(1 + \log (H/h)^2\right)\,.\]
	\end{theorem}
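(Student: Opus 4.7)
The plan is to apply the standard abstract additive Schwarz framework (see \cite[Chap.~2]{toselli2004domain}): once the splitting \eqref{schwarz_split_intro} is equipped with exact local solvers $P_X$, the bound on $\kappa(P_{\rm ad}(H;h))$ reduces to proving two ingredients. First, a \emph{finite coloring} of the face spaces $\{\widetilde{V}_{\mathcal F}\}_{\mathcal F}$, which follows immediately from the fact that each $\widetilde{V}_{\mathcal F}$ is supported on a single coarse triangle $\mathcal F\in\mathcal T_H$, so that non-zero $a(\cdot,\cdot)$-interaction occurs only between faces sharing a coarse edge, yielding a coloring number independent of $h$ and $H$. Second, and the crux of the matter, a \emph{stable decomposition}: every $\varphi_h\in\widetilde V_h([\Gamma])$ admits a splitting $\varphi_h=\sum_{\mathcal F}\varphi_{\mathcal F}+\varphi_{\mathcal W}+\varphi_H$ with components in the respective subspaces and
\[
\sum_{\mathcal F}a(\varphi_{\mathcal F},\varphi_{\mathcal F})+a(\varphi_{\mathcal W},\varphi_{\mathcal W})+a(\varphi_H,\varphi_H)\le C\bigl(1+\log(H/h)\bigr)^{2}\,a(\varphi_h,\varphi_h).
\]

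To construct this decomposition I would follow the programme sketched in the introduction: transfer the Bramble--Pasciak--Schatz substructuring splitting from volume FEM to the jump space via the jump operator $[\cdot]_\Gamma$. Concretely, fix a shape-regular simplicial mesh of a box $\Omega\supset\Gamma$ compatible with $\mathcal T_h$ (and with $\mathcal T_H$), and let $V_h(\Omega)$, $V_H(\Omega)$ be the associated $P_1$-conforming $H^1$-spaces. Using a stable extension operator $E_h:\widetilde V_h([\Gamma])\to V_h(\Omega)$ with $[E_h\varphi_h]_\Gamma=\varphi_h$ and $\|E_h\varphi_h\|_{H^1(\Omega)}\lesssim\|\varphi_h\|_{\widetilde H^{1/2}([\Gamma])}\simeq a(\varphi_h,\varphi_h)^{1/2}$, I would apply the classical volumetric BPS decomposition to $u_h:=E_h\varphi_h$, split $u_h=\sum_{\mathcal F}u_{\mathcal F}+u_{\mathcal W}+u_H$ with the standard poly-logarithmic stability, and then set $\varphi_X:=[u_X]_\Gamma$. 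This is exactly the abstract mechanism that the paper advertises as the content of \Cref{sec:quotientSplit}: stability is inherited by the quotient provided a right-inverse of the jump operator exists at the discrete level.

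The two conditions I must then verify are: (i) the existence of the stable extension $E_h$ with constant independent of $h$ (here I would rely on the definition of $\widetilde H^{1/2}([\Gamma])$ as a quotient/trace space recalled in \Cref{sec:HypersingularBIE}, combined with a standard quasi-interpolation onto $V_h(\Omega)$); and (ii) that the volumetric substructuring components, when restricted by $[\cdot]_\Gamma$, actually land in the target spaces $\widetilde V_{\mathcal F}$, $\widetilde V_{\mathcal W}$, $\widetilde V_H$ defined in \Cref{def:proposedJumpSplit}. Point (ii) follows from the matching between the supports of the BPS components (face-interior, wire-basket, coarse) and the vertex-based partition used to define the surface subspaces.

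The main obstacle will be point (i): constructing a uniformly stable discrete extension $E_h$ from $\widetilde H^{1/2}([\Gamma])$ to $H^1(\Omega)$ that is \emph{jump-preserving at the discrete level} and that respects the multi-screen structure, in the spirit of \cite[Thm.~2.2]{hiptmair2012stable}. Non-manifold junctions are the delicate point, since the usual face-by-face extension arguments break down there; one must instead patch local lifts around junction edges and vertices while controlling the $H^1$-norm by the multi-screen $\widetilde H^{1/2}([\Gamma])$ norm. Once this discrete extension and its inverse (the jump) are shown to form a stable pair, the poly-logarithmic factor in the theorem is inherited directly from the volumetric BPS estimate, concluding the proof.
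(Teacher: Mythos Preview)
Your overall strategy---transfer a BPS-type volume splitting to the jump space via $[\cdot]_\Gamma$, using a uniformly stable discrete right inverse of the jump---is exactly the paper's approach, and your treatment of the stable-decomposition direction (the bound on $\lambda_{\min}(P_{\rm ad})$) is essentially correct: this is precisely what the paper encodes as condition~\textbf{(A)} in \Cref{sec:quotientSplit}, and your anticipated difficulty at junctions is resolved in \Cref{thmQI} by composing a Scott--Zhang interpolant with the ``jump-aware'' quasi-interpolant on $\mathbb{H}^{1/2}(\Gamma)$.

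There is, however, a genuine gap in your argument for the upper bound on $\lambda_{\max}(P_{\rm ad})$. You claim a finite coloring on the jump space because ``non-zero $a(\cdot,\cdot)$-interaction occurs only between faces sharing a coarse edge''. This is false: the hypersingular bilinear form $a$ is given by a non-local kernel $1/\lvert x-x'\rvert$ (see \Cref{thmWweak}), so $a(\varphi,\psi)\neq 0$ for $\varphi\in\widetilde V_{\mathcal F}$, $\psi\in\widetilde V_{\mathcal F'}$ even when $\mathcal F$ and $\mathcal F'$ are far apart. A strengthened Cauchy--Schwarz matrix for the surface splitting is therefore full, and no coloring argument on $\Gamma$ yields an $H$-independent bound. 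The paper circumvents this by doing the coloring \emph{in the volume}, where the $H^1$ inner product is local (proof of \Cref{thm:splitVol1}), and then transferring the resulting \emph{lower} bound on the splitting norm to the quotient. That transfer is not free: it requires, in addition to the global extension $E_h$, stable \emph{subspace-wise} extensions $E_{h,i}:\widetilde V_i\to V_i$ with uniformly bounded norms---this is condition~\textbf{(B)} of \Cref{mainthm}/\Cref{lem:weakening}, verified in \Cref{lem:satB}. For the face spaces this amounts to showing $\norm{u_h}_{H^1(\R^3\setminus\Gamma)}\lesssim \norm{[u_h]_\Gamma}_{\widetilde H^{1/2}([\Gamma])}$ uniformly in $H$, which uses the scale-invariant estimates of \cite{pechstein2013shape}; for the wire-basket no such bound is available, and the paper invokes the relaxed version \Cref{lem:weakening} to drop that one subspace. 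This subspace-wise extension step is the missing ingredient in your plan.
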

Numerical results in \Cref{subsec:numerics} show that this bound is sharp, and in particular that the logarithmic term cannot be removed. The method presented here and the condition number estimate in \Cref{thm:main} are very similar to the ones obtained in \cite[Theorem 1]{heuer2001additive} \cite[Theorem 1]{heuer1998iterative} for planar surfaces in dimension 3.

\begin{figure}[H]
	\centering
	\includegraphics[width=0.3\linewidth]{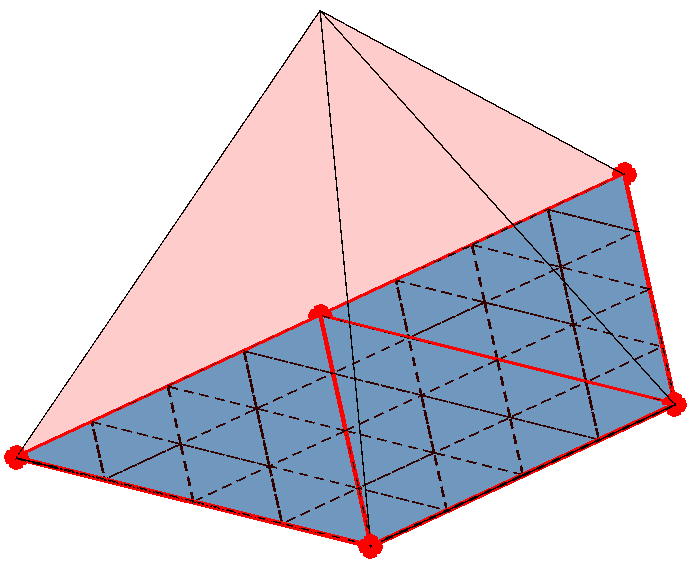} \includegraphics[width=0.3\linewidth]{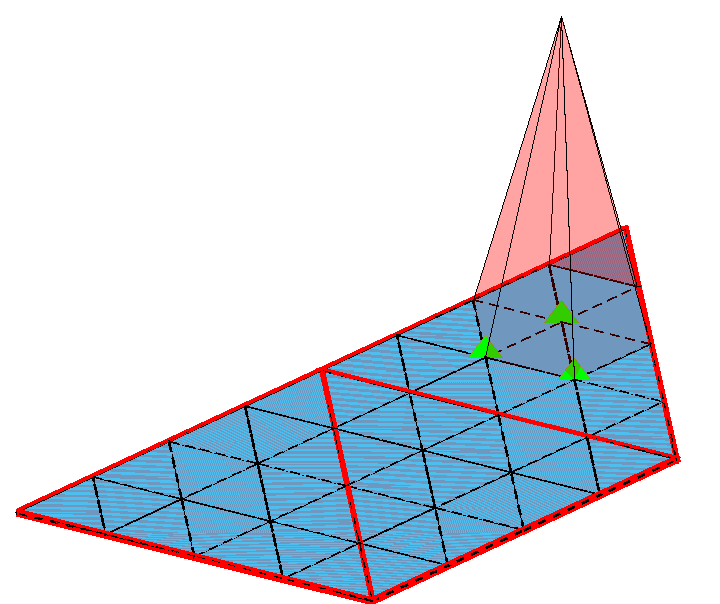}
	\includegraphics[width=0.3\linewidth]{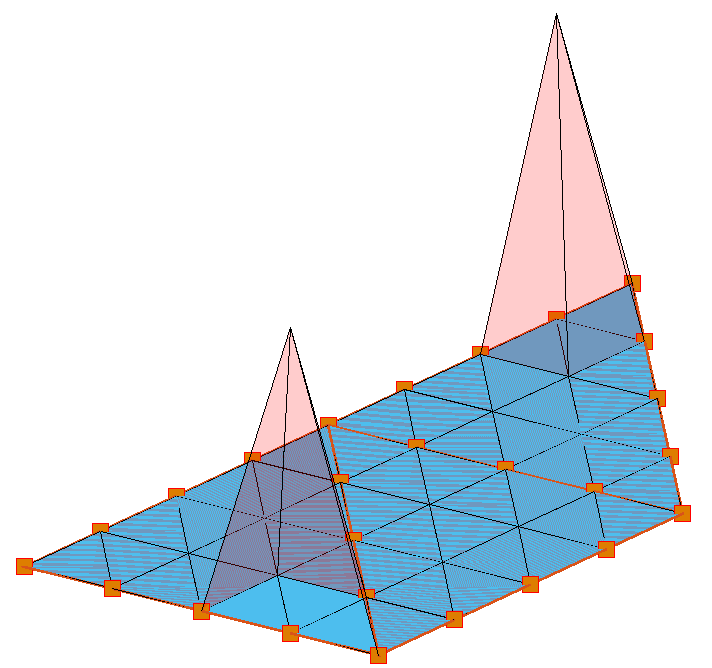}
	\caption{For a simple multiscreen $\Gamma$ composed of $3$ coarse triangles and for piecewise-linear $\widetilde{H}^{1/2}(\Gamma)$-conforming boundary element spaces, sketch of the boundary values of the basis functions belonging to each of the three types of sets in the splitting \eqref{eq:proposedSplit}. Left: a coarse basis function. Middle: a face basis function. Right: two wire-basket basis functions. {In this example, there are three face spaces, one associated to each coarse triangle. The vertices corresponding to the right-most face space are highlighted as green triangles in the middle figure. Similarly, the vertices corresponding to the wire-basket space are highlighted as orange squares in the third figure, and the vertices associated to the coarse space, as red circles in the first figure.}}
	\label{fig:sketchSplit}
\end{figure}
	\begin{remark}[Bound on the number of Preconditioned Conjugate Gradient iterations]
		Let $A_h: \widetilde{V}_h([\Gamma]) \to \widetilde{V}_h([\Gamma])'$ be the operator defined by
		\[\langle A_h u_h,v_h\rangle \isdef a(u_h,v_h)\,, \quad \forall u_h,v_h \in \widetilde{V}_h([\Gamma])\,,\]
		Then one can check that the variational problem \eqref{eq:varfintro} is equivalent to 
		\begin{equation}
			\label{eq:preconditioned}
			P_{\rm ad}(H;h) \varphi = M(H;h) l_g\,.
		\end{equation}
		with $M(H;h):\widetilde{V}_h([\Gamma])' \to \widetilde{V}_h([\Gamma])$ defined by
		$$\widetilde{V}_h([\Gamma])' \ni l \mapsto \sum_{\mathcal{F} \textup{ face of } \Gamma} \phi_{\mathcal{F}} + \phi_{\mathcal{W}} + \phi_H$$
		and where $\phi_X \in \widetilde{V}_X$ is the unique solution of the variational problem
		\[\textup{Find } \phi_X \in \widetilde{V}_X \,\textup{ s.t. } \, a(\phi_X,v_X) = l(v_X) \quad \forall v_X \in \widetilde{V}_X\,.\]
		Note that $P_{\rm ad}(H;h) = M(H;h)A_h$ and that $M(H;h)$ -- hence also $P_{\rm ad}(H;h)$ -- can be evaluated in parallel. The quantity $\kappa = \kappa(P_{\rm ad}(H;h))$ controls the rate of convergence, in the $a(\cdot,\cdot)^{1/2}$ norm, of the preconditioned conjugate gradient method for the resolution of \eqref{eq:preconditioned} in the sense that the error $e_n = \varphi_n - \varphi$ after $n$ iterations satisfies $a(e_n, e_n)^{1/2} \leq 2\rho^{n}a(e_0,e_0)^{1/2}$, where $\rho = \frac{\sqrt{\kappa}-1}{\sqrt{\kappa}+1}$, see e.g. \cite[p.163]{patterson2006iterative}. 
	\end{remark}
	\begin{remark}[Approximate solvers]
		It is possible to extend the theory to accommodate for ``approximate solvers" on the subspaces, which amounts to defining the operators $P_{X}$ in eq. \eqref{schwarz_split_intro} as $\widetilde{a}_X(\cdot,\cdot)$-orthogonal projections onto $\widetilde{V}_X$, for some suitable choice of the local bilinear form $\widetilde{a}_X(\cdot,\cdot)$ on the considered subspace $\widetilde{V}_X$. For instance, using the quasi-uniformity assumption of the mesh, it is possible to prove that the condition number bound of Theorem \ref{thm:main} still holds when replacing the exact bilinear form on the wire-basket by a cheaper, pointwise scalar product, much in the spirit of \cite[Remark 4.3]{bramble1986construction}. Similarly, it is a natural idea to consider approximate solvers on the faces using for instance Calder\'{o}n preconditioning (this is essentially the central idea of \cite{cools2022calder}), or other approximations of the Laplace layer potentials on screens \cite{hiptmair2018closed,averseng2022quasi}, but tracking the dependence with respect to the coarse mesh parameter $H$ seems more delicate in this case; we leave that question to future work.
	\end{remark}
\begin{remark}[Provenance of the logarithmic factor]
	The logarithmic factor comes from the use of a decomposition of $\Gamma$ into panels with {\em no overlap}, and, in the analysis, from discrete trace inequalities for edges in $\R^3$ \cite[Lemma 4.16]{toselli2004domain}. The work \cite{cools2022calder}, in which a similar condition number estimate is proved for a BEM preconditioner on multi-screens, can be thought of as using panels with {\em generous overlap}, a situation which in principle (in view of the corresponding properties for substructuring algorithms in FEM) should lead to the complete removal of the logarithmic factors. However, in that reference, {\em approximate solvers} are used on the face spaces, given by the standard Calder\'{o}n preconditioners. This re-introduces the logarithmic factor, but from a somewhat different source, namely the so-called ``duality mismatch" between the spaces $H^{\pm1/2}(S)$ when $S$ is a smooth manifold with boundary. 
\end{remark}

\begin{remark}[Case where $\Gamma$ is a manifold]
	All the material discussed in this paper also applies to the case where $\Gamma$ is a regular manifold with or without boundary. The Galerkin method then reduces to the standard boundary element method for the hypersingular equation on $\Gamma$. In this regard, our presentation differs from other works on BEM for multiscreens \cite{claeys2021quotient,cools2022calder,cools2022preconditioners}; the difference comes from the fact that we remove the kernel from the hypersingular operator, cf. \Cref{def:defa}. 
\end{remark}
	\subsection{Motivating numerical experiments}
	\label{subsec:numerics}
	\paragraph{Experiment 1. Failure of ``naive" BEM with multiscreens}

	We consider a ``plus-shaped" geometry $\Gamma = [-1,1] \times \{0\} \cup \{0\} \times [-1,1]$ and let 
	\[\mathcal{U}(x_1,x_2) = \textup{Re} \left(\frac{-1}{2iw}\right)\,,\]
	where $z = x_1 + ix_2$ and $w$ is defined by the conformal mapping $z = \frac{1}{2}\left(w + \frac{1}{w}\right)$
	from the region $\abs{z} > 1$ to the region $\mathbb{C} \setminus [-1,1]$ (see \cite[Exercise 8.16]{McLean}). Note that $\mathcal{U}$ is the potential generated by a dipole distribution of density $\varphi(x_1,x_2) = \sqrt{1 - x_1^2}$ on $\Gamma$. One can check that $\mathcal{U}$ is harmonic on $\R^2 \setminus \Gamma$ (it is even harmonic on $\R^2 \setminus ([-1,1] \times \{0\})$) and satisfies an appropriate decay condition at infinity. This explicit solution to the Laplace equation in the complement of $\Gamma$ can thus be used to test a boundary element method. {Taking the cue from the hypersingular boundary integral equation on screens,} a naive approach is to discretize $\Gamma$ using an edge mesh with $4$ coarse elements corresponding to the $4$ arms of the cross, and subdividing each element into a finite number of segments, giving a mesh $\mathcal{M}_{\Gamma,h}$. The surface is not orientable, but in principle, one can attempt to pick an arbitrary choice of a normal vector field $\vec n$ on each element and solve for the surface density $\varphi_{h,\rm naive} \in V_h(\Gamma)$ such that 
	\begin{equation}
		\label{naive}
		\frac{-1}{2\pi}\iint_{\Gamma\times \Gamma} \vec n_{x} \times \nabla_\Gamma \varphi_{h,\rm naive}(x) \cdot \vec n_{y} \times \nabla_\Gamma \psi(y) \ln(\norm{x - y})dx dy = \int_{\Gamma} \vec n_{x} \cdot \nabla \mathcal{U}(x)\psi(x)d x
	\end{equation}
	for all $\psi \in V_h(\Gamma)$. Here, $V_h(\Gamma)$ is the set of \emph{continuous piecewise linear functions} on the mesh $\mathcal{M}_{\Gamma,h}$, with a Dirichlet condition on $\partial \Gamma$, and $\nabla_\Gamma$ is the tangential gradient on $\Gamma$. The corresponding potential $\mathcal{U}_h$ is given by the formula
	\[\mathcal{U}_{h}(x) \isdef \frac{-1}{2\pi} \int_{\Gamma} \frac{\vec n_{y}\cdot (y - x)}{\norm{x - y}^2}\varphi_{h,\rm naive}(y)dy\,.\]
	This would be the standard boundary element methodology, albeit applied to a non-manifold mesh $\mathcal{M}_{\Gamma,h}$. However, as is obvious in \Cref{qualiWrong}, the solution $\mathcal{U}_{h}$ obtained in this way is incorrect. We examine this problem further by computing the discrete $\ell^2$ norm of $\mathcal{U}_{h}^{(i)} - \mathcal{U}$ on a Cartesian grid in a square box surrounding $\Gamma$, for two families $(\mathcal{U}_{h}^{(i)})_{h>0}$, $i = 1,2$, of ``naive approximations", indexed by the average mesh size $h$, where the mesh of $\Gamma$ is uniform ($i = 1$) or quadratically refined near the $4$ vertices of $\partial\Gamma$ ($i = 2$). The results are plotted as the solid and dashed blue curves in Figure \ref{fig:plusShapedCV}, respectively. In both cases, they show a slow decrease of this error as $h \to 0$. We compare those convergence curves (``naive method") to the ones obtained when the approximation of $\mathcal{U}$ is computed via the conforming Galerkin method described in this paper (``new method"). In this case, we observe convergence orders of $O(h)$ for the uniform mesh and $O(h^2)$ for the quadratically refined mesh (solid and dashed red curves, respectively).
	\begin{figure}
		\label{fig:plusShapedCV}
		\centering
		\raisebox{-0.5\height}{\includegraphics[width=.4\textwidth]{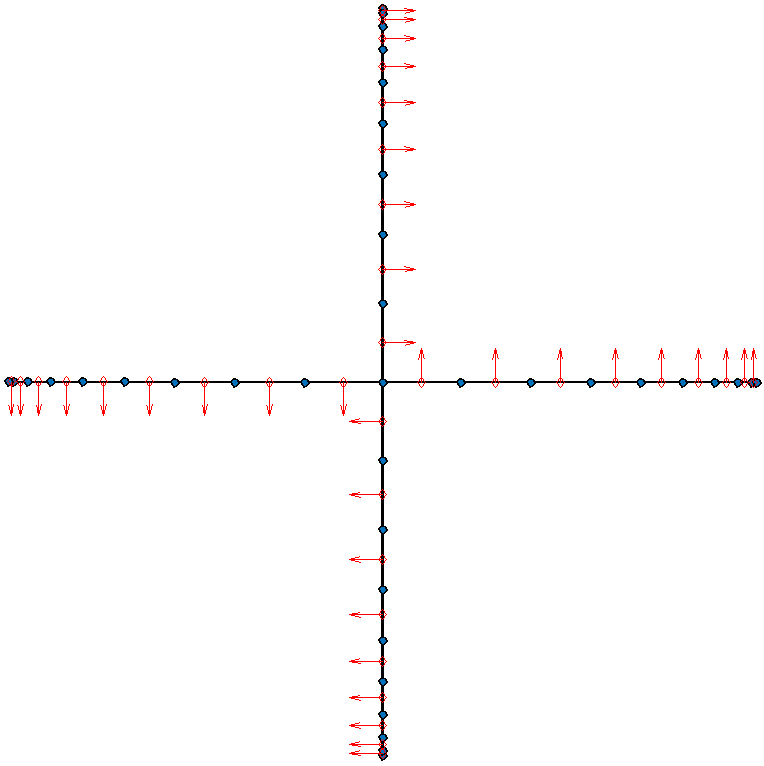}}\qquad
		\raisebox{-0.5\height}{\includegraphics[width=.43\textwidth]{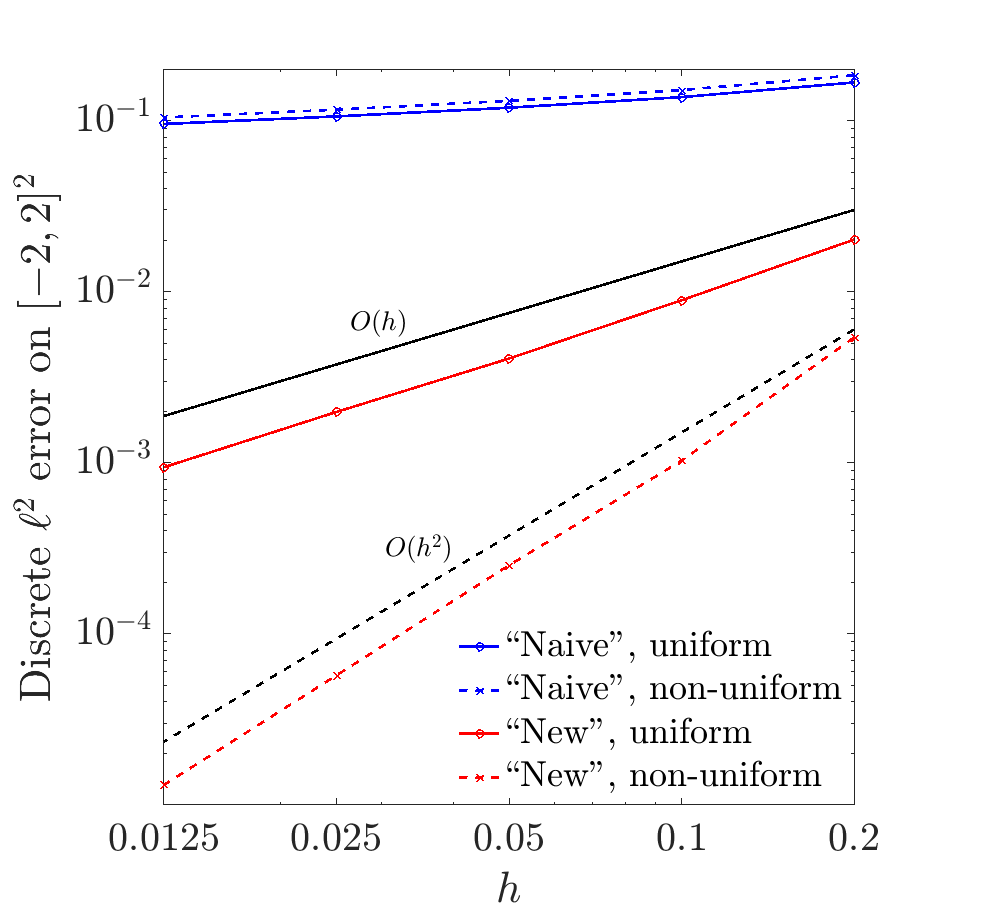}}
		\caption{Left: multi-screen $\Gamma$ with a non-uniform mesh, and choice of normal vector $\vec n$ for the computation of $\mathcal{D}$. Right: discrete $\ell^2$ error for the ``naive method" (blue curves) for the uniform (solid line) and non-uniform mesh (dashed line). Comparison with the ``new method", the Galerkin method presented in this paper (red curves)}
	\end{figure}
	
	\begin{figure}
		\centering
		\includegraphics[width=0.3\textwidth]{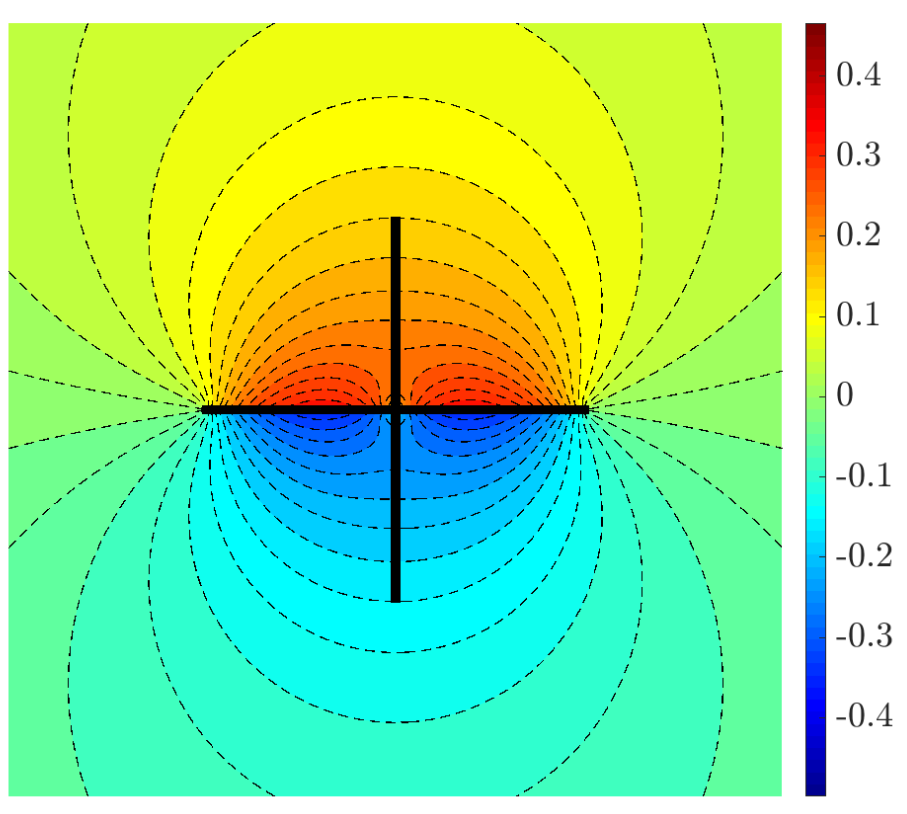}\quad\includegraphics[width=0.3\textwidth]{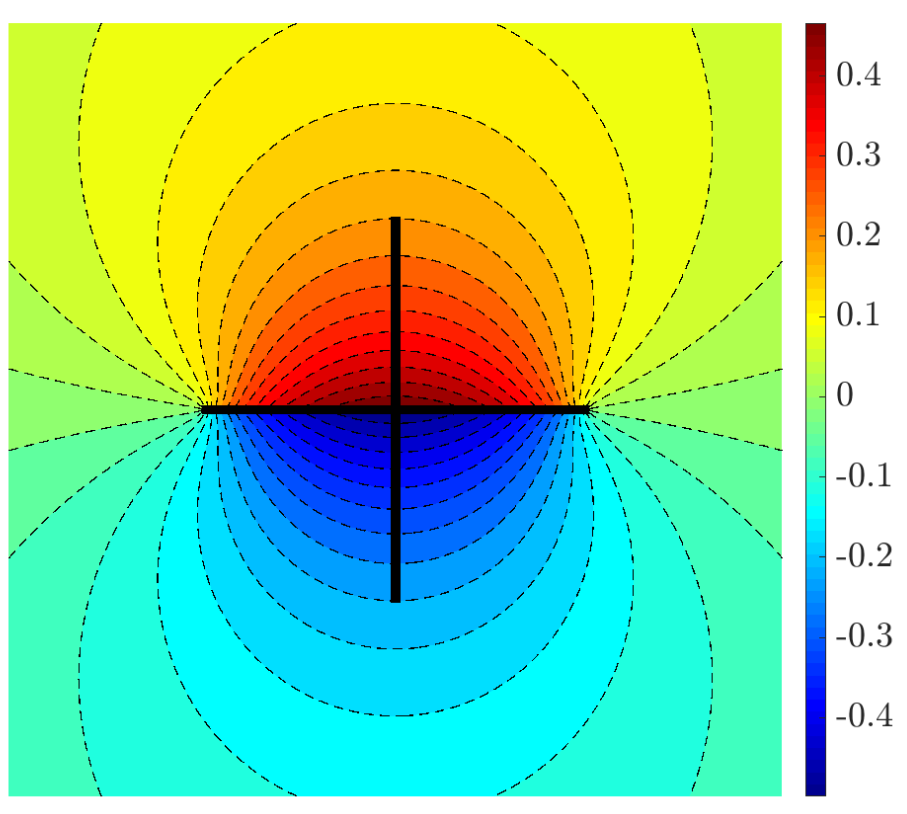}
		\quad\includegraphics[width=0.3\textwidth]{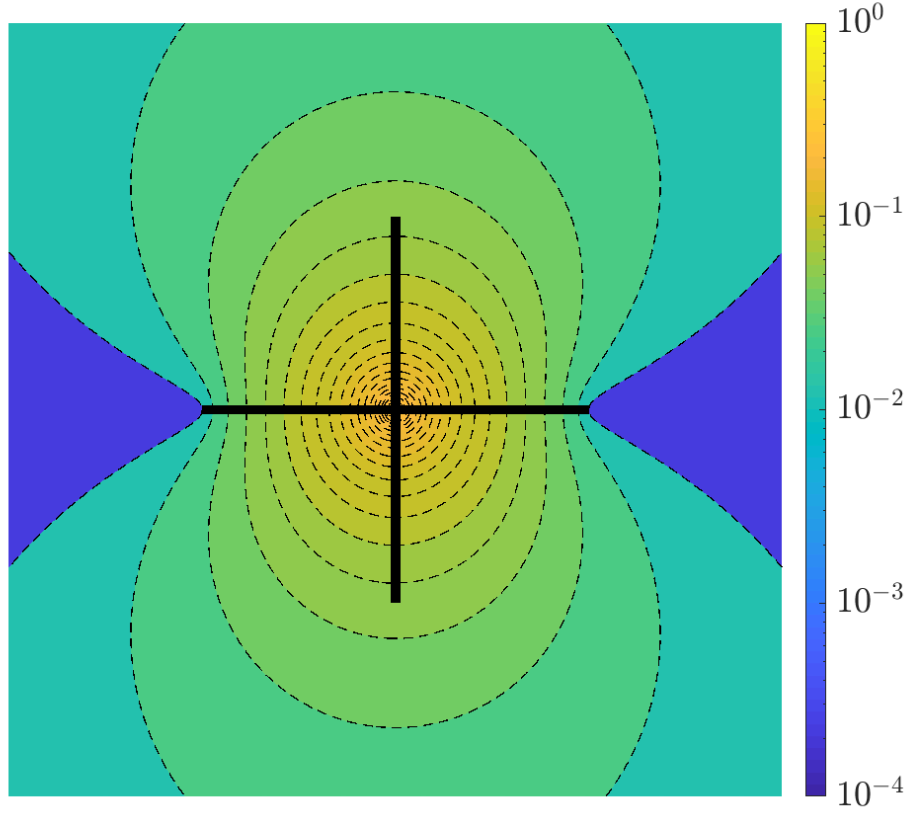}
		\caption{Left: solution $\mathcal{U}_h$ computed via the ``naive" method involving the variational problem \eqref{naive}, with a mesh size of  with a uniform mesh, $h = 0.025$. Middle: exact solution. Right: error in base 10 logarithmic scale. The ``naive" method produces a qualitatively wrong solution, with the error concentrated at the cross-point.}
		\label{qualiWrong}
	\end{figure}
	\begin{figure}
		\centering 
		\includegraphics[width=0.3\textwidth]{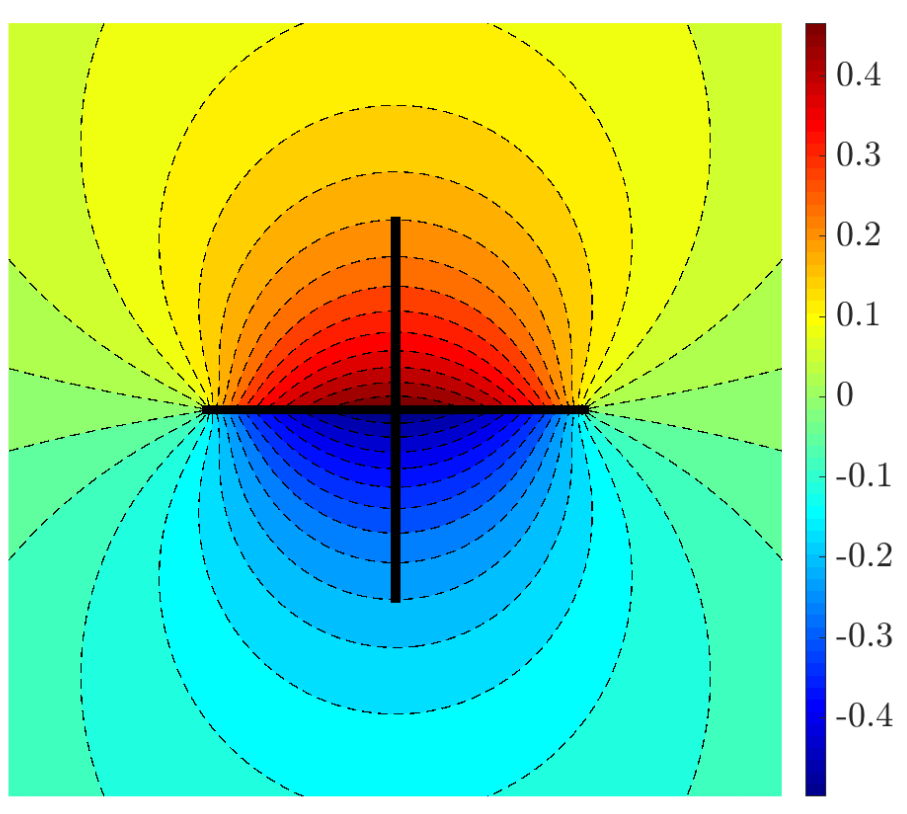}\quad\includegraphics[width=0.3\textwidth]{trueSol}
		\quad\includegraphics[width=0.3\textwidth]{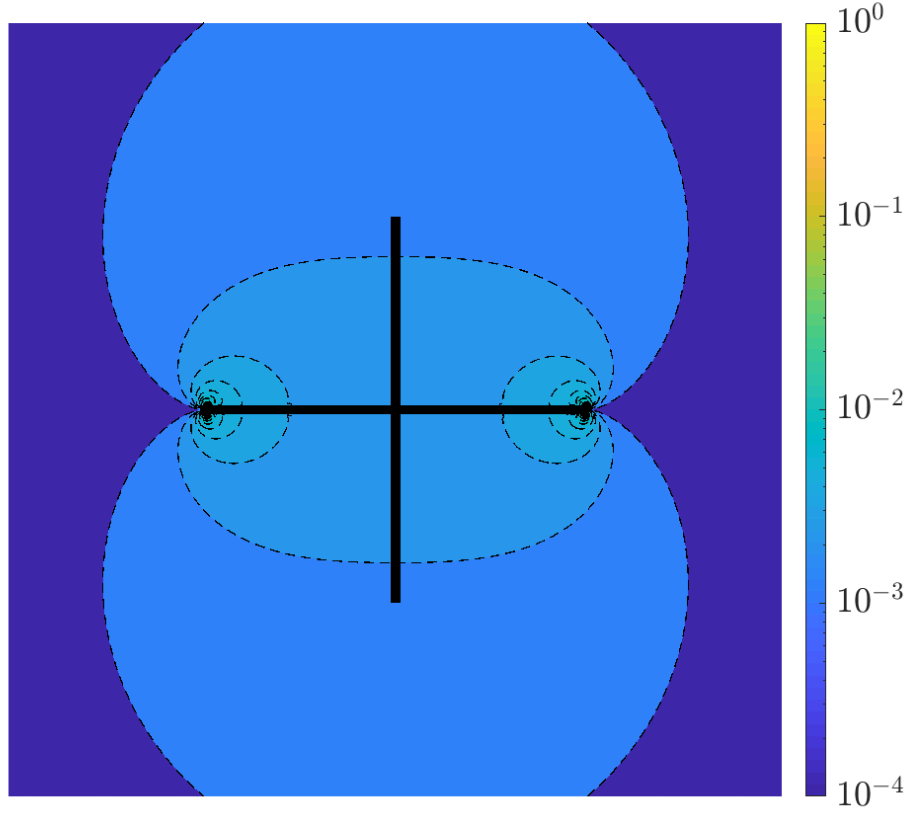}
		\caption{Left: solution $\mathcal{D}$ computed via the new method described in this paper, with a uniform mesh of size $h = 0.05$. Middle: exact solution. Right: error in base 10 logarithmic scale. The ``new" method produces the correct solution, up to a small error concentrated near the edge singularities of the exact solution $\mathcal{U}$.}
	\end{figure}

	In the example above, the true solution has a single-valued jump on the multi-screen, hence one may expect that some better choice of normal vector might still allow the naive BEM to find the right solution. In the next example, we change the Neumann condition in a way that makes the solution truly $4$-valued at the cross-point. In this case, the exact solution is not known analytically, but it is clear that the naive BEM cannot converge to the right solution, since it can only have up to $2$ different limits at the cross-point. \Cref{fig:otherSol} shows a comparison between the two methods in such a case. 
	
	\begin{figure}
		\centering 
		\includegraphics[width=0.45\textwidth]{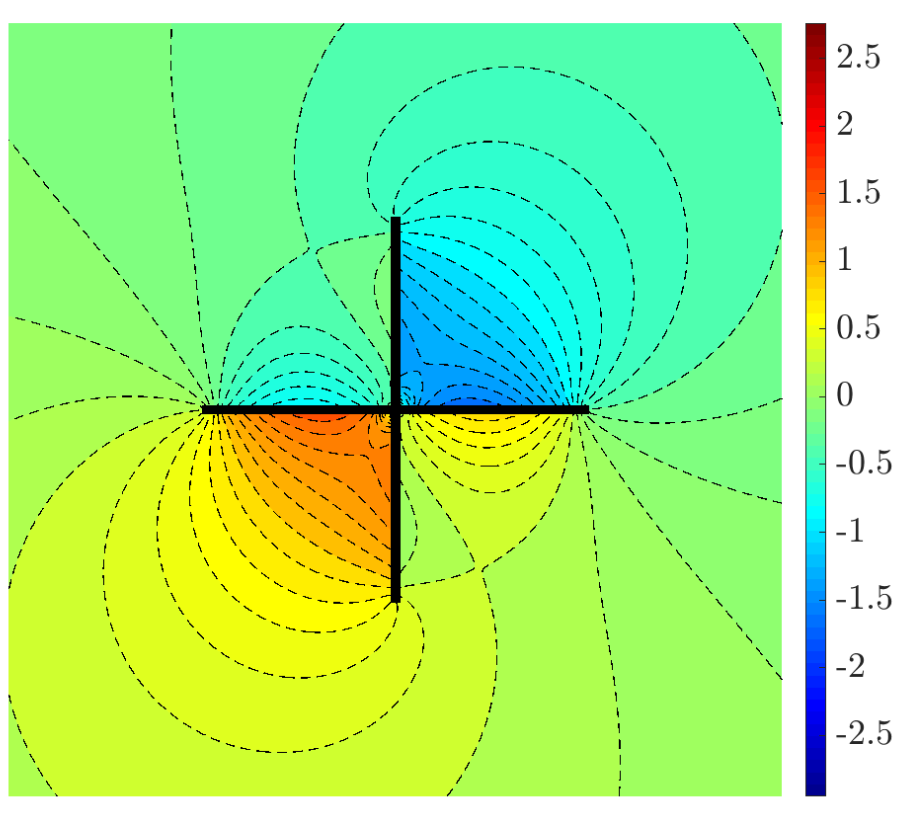}\qquad\includegraphics[width=0.45\textwidth]{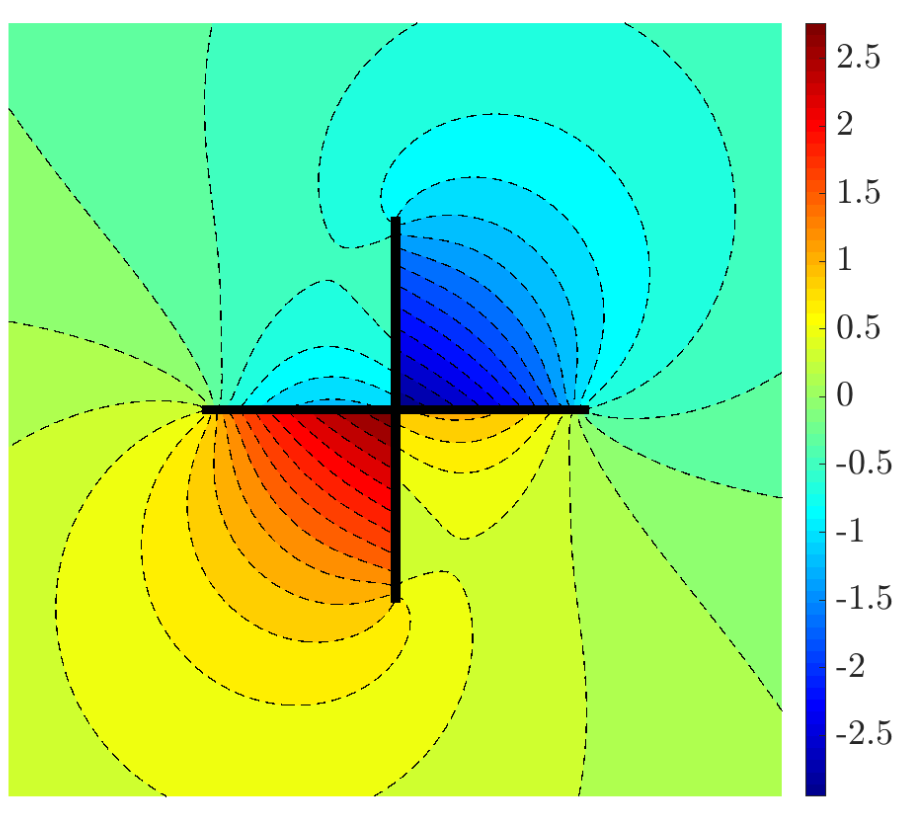}
		\caption{Approximate solutions of Problem \eqref{PDE_intro} with a Neumann condition given by the constant vector field $\vec g = (1,2)^T$. Left: ``naive" method, Right: ``new" method. The solutions produced by the two methods are conspicuously different at the center of the cross.}
		\label{fig:otherSol}
	\end{figure}
	
	\paragraph{Experiment 2. Condition numbers for a 2D multi-screen}
	
	We now illustrate our main result about the substructuring preconditioner, first for a $2D$ setting (our presentation is restricted to dimension $3$, but the analysis carries over in the easier case of dimension $2$). The multi-screen used in this example is a ``threefold junction", that is, a set of three line segments joining the center of gravity of an equilateral triangle to its vertices. We compare in \Cref{fig:curves2D} the spectral condition number for the linear system when no preconditioner is used, to the condition number $\kappa(P_{\rm ad}(H;h))$ of the preconditioned linear system using our substructuring domain decomposition method. As expected from results available for regular geometries, see \cite[Section 4.5]{sauter2011boundary}, we observe a condition number of the linear system without preconditioner behaving like $O(h^{-1})$. The growth of the spectral condition number for the preconditioned linear system is in agreement with \Cref{thm:main}.
\begin{figure}[H]
	\centering
	\includegraphics[width=0.95\textwidth]{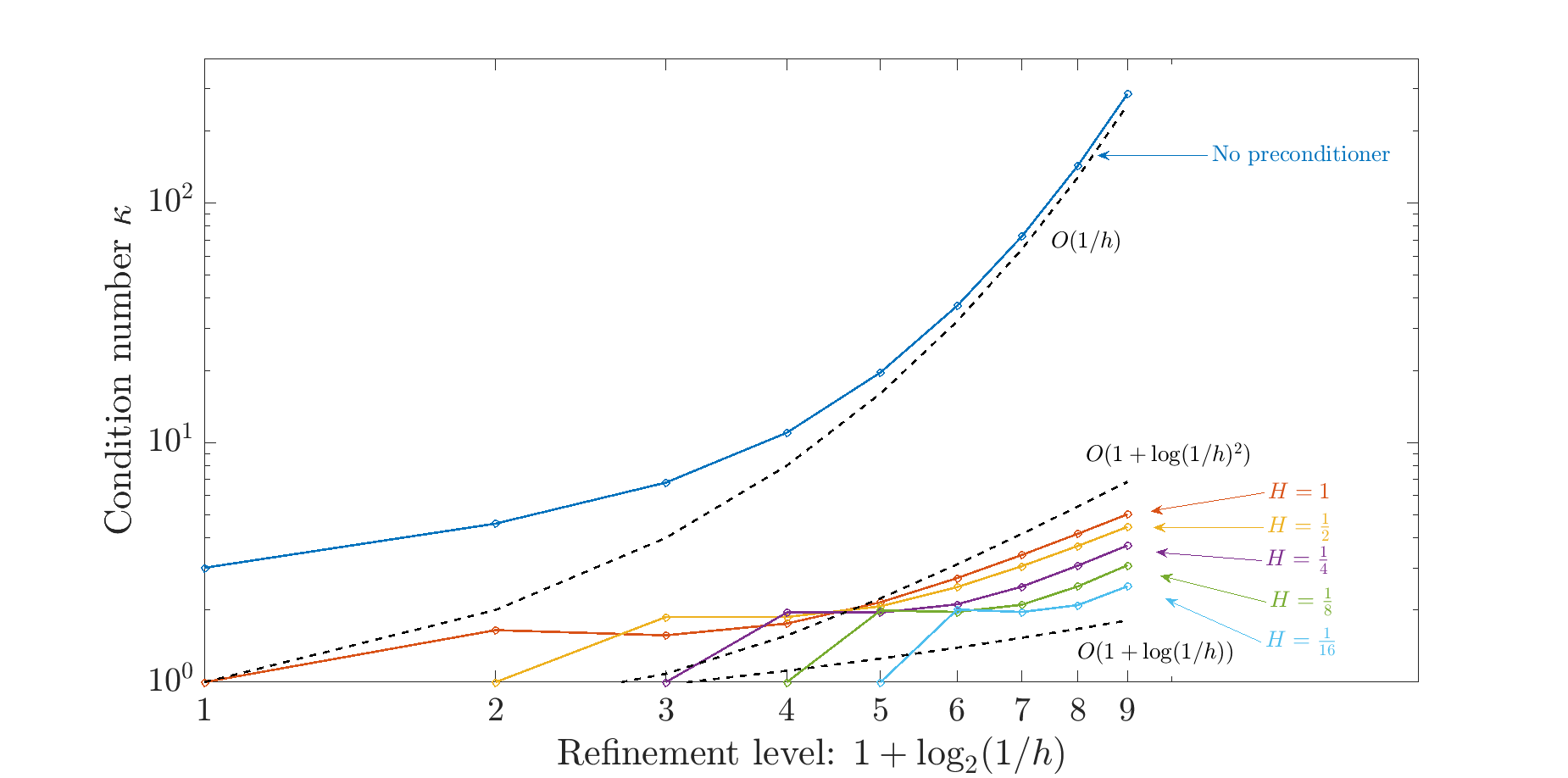}
	\includegraphics[width=0.95\textwidth]{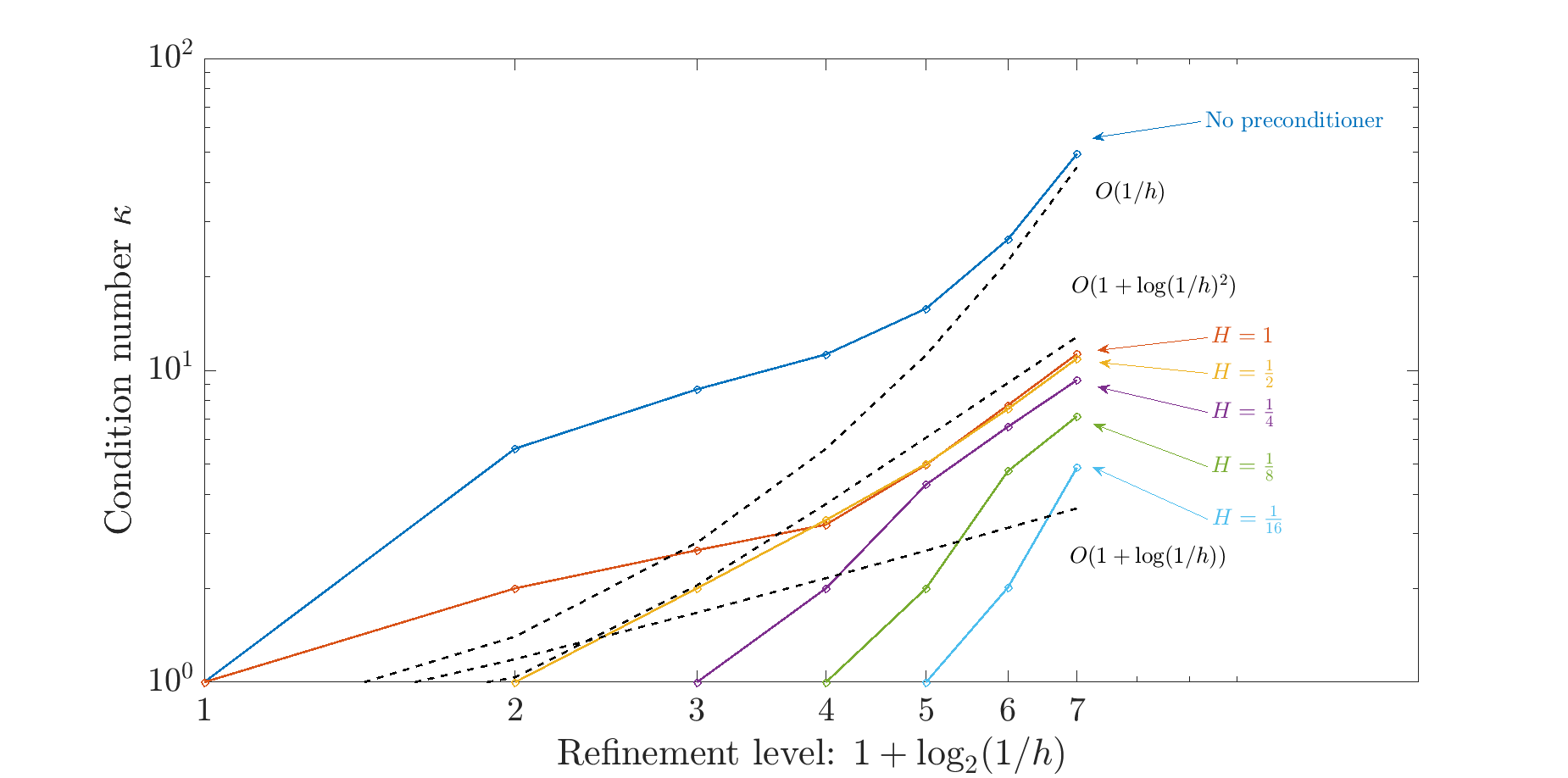}
	\caption{
		Condition number $\kappa$ (in log-scale) of the preconditioned linear system, as a function of $H$ and $h$, and comparison with the condition number when no preconditioner is used. Top: 2D problem (threefold junction). Bottom: 3D problem (``bow-tie" multiscreen depicted in \Cref{fig:bowtie}).}
	\label{fig:curves2D}
\end{figure}
	\paragraph{Experiment 3. Condition numbers for a 3D multi-screen}

	We include analogous experimental results in a $3D$ setting in \Cref{fig:curves2D}, bottom panel, using the multi-screen geometry in \Cref{fig:bowtie}. The results are qualitatively similar to those in $2D$, and illustrate the sharpness of \Cref{thm:main}, in particular with respect to the power of the logarithmic factor in the estimate. 
	\begin{figure}
		\centering
		\raisebox{-0.5\height}{\includegraphics[height=5cm]{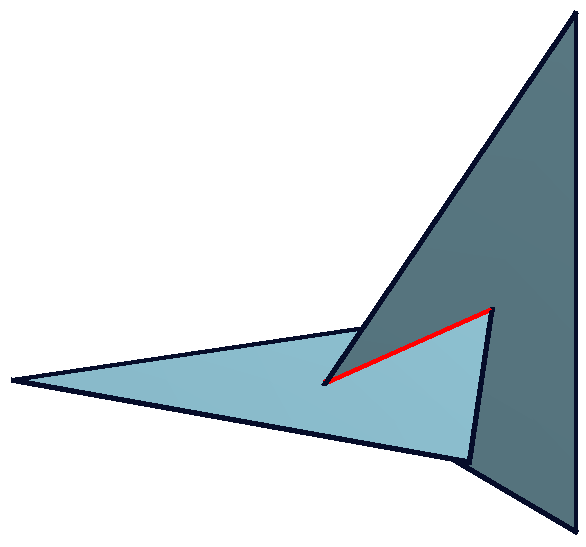}} \qquad \raisebox{-0.5\height}{\includegraphics[height=6cm]{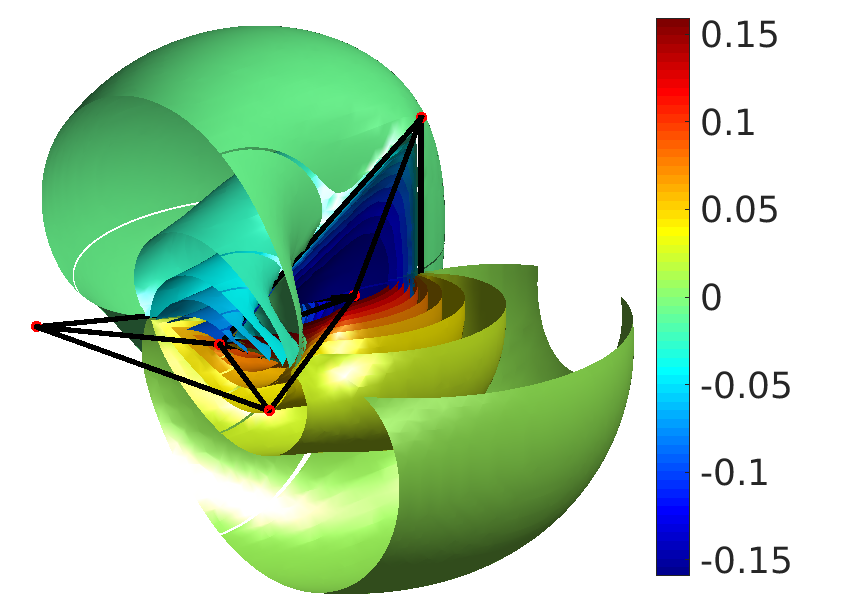}}
		\caption{Left: the polygonal multiscreen used in the experiments of Figure \ref{fig:curves2D} (bottom panel). It is composed of two equal-sized and perpendicularly arranged equilateral triangles, intersecting along a common median (highlighted in red). Right: some isosurfaces of the numerical approximation of the field $\mathcal{U}$ solving the problem \eqref{PDE_intro} with a constant vector $\vec g = (1,0.5,0.25)^T$.}
		\label{fig:bowtie}
	\end{figure}

	We now continue with the definition and analysis of the additive Schwarz preconditioner, and the proof of \Cref{thm:main}.
	
	\section{Laplace hypersingular Boundary Integral Equation on Multiscreens}
	\label{sec:HypersingularBIE}
	
	In this section, we formulate a precise boundary value problem for the Laplace equation in $\R^3 \setminus \Gamma$ with Neumann conditions on the multiscreen. We give an equivalent reformulation of this problem a boundary integral equation. Most of the material is recalled from \cite{claeys2013integral}. For conciseness, and with our boundary element application in mind, we restrict the presentation to {\em polygonal} multiscreens.
	
	\subsection{Polygonal multi-screens}
	\label{sec:polyMS}
 	
 	We use the same notation as in \cite{averseng2022fractured}. An {\em $n$-simplex} $S$, for $n \in \{0,1,2,3\}$, is a set of $n+1$ affinely independent points in $\R^3$, called the vertices of $S$. The closed convex hull of the vertices of $S$ is denoted by $\abs{S}$. The simplex $S$ is a {\em vertex}, {\em edge}, {\em triangle}, and {\em tetrahedron} when $n = 0$, $1$, $2$, and $3$, respectively. 
	For $n \geq 1$, the {\em facets} of $S$ are the $(n-1)$-simplices $S'$ such that $S' \subset S$; the set of all facets of $S$ is denoted by $\mathcal{F}(S)$.
	\begin{definition}[Simplicial mesh]
		 An \emph{$n$-dimensional {\em mesh}} $\mathcal{M}$ is a finite set of $n$-simplices satisfying the condition
		\[\forall (S,S') \in \mathcal{M} \times \mathcal{M}\,, \quad \abs{S \cap S'} = \abs{S} \cap \abs{S'}\,.\]
		Given an $n$-dimensional mesh $\mathcal{M}$, let 
		\[\mathcal{F}(\mathcal{M}) \isdef \bigcup_{S \in \mathcal{M}} \mathcal{F}(S)\,.\]
		For $n\geq 1$, the {\em boundary} $\partial \mathcal{M} \subset \mathcal{F}(\mathcal{M})$ is the $(n-1)$-dimensional mesh defined as the set of faces that occur in exactly one simplex of $\mathcal{M}$, that is, 
		\[\partial \mathcal{M} \isdef \enstq{F \in \mathcal{F}(\mathcal{M})}{\exists! S \in \mathcal{M} \,\textup{ s.t. }\, F \in \mathcal{F}(S)}\,.\] 
		The {\em geometry} of a mesh $\mathcal{M}$ is defined by 
		\[\abs{\mathcal{M}} \isdef \bigcup_{S \in \mathcal{M}} \abs{S}\,.\]
		The $n$-dimensional mesh $\mathcal{M}$ is {\em regular} if its geometry is a manifold. If $n \geq 1$, then $\partial \mathcal{M}$ is also a regular mesh, and there holds $\abs{\partial \mathcal{M}} = \partial \abs{\mathcal{M}}$.
	\end{definition}  
	
	\begin{definition}[Polygonal multi-screen]
		\label{def:polygonalMS}
		A set $\Gamma$ is a \emph{polygonal multi-screen}, if there exists a regular tetrahedral mesh $\mathcal{M}_{\Omega}$ of a sufficiently large open cube $\Omega = [-l,l]^3\subset \R^3$, $l > 0$, and a triangular mesh $\mathcal{M}_\Gamma \subset \mathcal{F}(\mathcal{M}_{\Omega}) \setminus \partial \mathcal{M}_\Omega$ such that 
		\[\Gamma = \abs{\mathcal{M}_\Gamma}\,.\]
		The mesh $\mathcal{M}_\Omega$ is further assumed to be partitioned into a collection of regular tetrahedral meshes $\mathcal{M}_{\Omega_1},\ldots,\mathcal{M}_{\Omega_J}$, in such a way that $\mathcal{M}_\Gamma \subset \partial \mathcal{M}_{\Omega_1} \cup \ldots \cup \partial \mathcal{M}_{\Omega_J}$ and, for each $j \in \{1\,,\ldots,J\}$, the intersection $\Gamma \cap \partial \Omega_j$ is a Lipschitz screen (i.e. a Lipschitz manifold with Lipschitz boundary) where $\Omega_j = \textup{int}(\abs{\mathcal{M}_{\Omega_j}})$. 
	\end{definition} 
	It follows from the definition that a multi-screen $\Gamma$ is a compact set. Setting in addition $\Omega_0 \isdef \R^3 \setminus \overline{\Omega}$, the sets $\Omega_0,\ldots,\Omega_J$ then define a Lipschitz partition of $\R^3$, in the sense of \cite[Definition 2.2]{claeys2013integral}. A polygonal multi-screen is thus a particular case of a multi-screen in the sense of \cite[Definition 2.3]{claeys2013integral}. The mesh $\mathcal{M}_\Omega$ is merely used for theoretical analysis and is not needed in our algorithm.

	In the remainder of this work, we fix a polygonal multi-screen $\Gamma$. For convenience, we further assume that $\R^3 \setminus \Gamma$ is connected.\footnote{This ensures uniqueness of the solution $\mathcal{U}$ of \eqref{PDE_intro}. If $\R^3 \setminus \Gamma$ has several connected components, the solution is unique up to adding constants in the bounded connected components. The material discussed here can easily be adapted to handle this situation  -- in particular the case where $\Gamma$ is a closed surface such as the boundary $\partial \mathcal{P}$ of a polyhedron $\mathcal{P}$ -- but we omit this for conciseness.} We denote by $\gamma_j: H^1(\Omega_j) \to L^2(\partial \Omega_j)$ the pointwise trace operator \cite[p. 100]{McLean}. In \Cref{sec:dd}, we require that all $\Omega_j$  for $j \neq 0$ be tetrahedra of diameter bounded by some constant $H >0$, thus providing a coarse mesh of $\mathcal{M}_\Omega$. This can be achieved, if necessary, by redefining the sets $\Omega_j$. For now, we impose no restrictions on the size of the domains $\Omega_j$ and the constants in the estimates proved in the next section are thus independent of $H$.

	\subsection{Quotient trace spaces} For an open set $U \subset \R^3$, let $C^\infty_c(U)$ be the set of real-valued functions $u$ that are infinitely differentiable and compactly supported on $U$. Let $L^2(U)$ be the set of real-valued square-integrable functions on $U$. We denote by $H^1(U)$ the Sobolev space of functions $u \in L^2(U)$ such that there exists a square-integrable vector field $\vec p \in (L^2(U))^3$ satisfying
	\[\int_{U} u \div \vec \phi\,dx  = - \int_{U} \vec p \cdot \vec \phi\,dx \quad \forall \vec \phi \in (C^\infty_c(U))^3\,.\]
	Writing $\nabla u \isdef \vec p$ for the {\em weak gradient} of $u$ on $U$, a Hilbert structure is defined on $H^1(U)$ by
	\[\norm{u}^2_{H^1(U)} \isdef \norm{u}^2_{L^2(U)} + \norm{\nabla u}^2_{L^2(U)}\,.\footnote{We emphasize that our notation for $H^1(U)$ differs from the standard \cite[Chap. 3]{McLean}, where this space is denoted by $W^1(U)$, and where $H^1(U)$ is instead defined via Fourier transforms (with the two definitions coinciding, e.g., when $U$ is a Lipschitz domain, but this will not be the case for most instances of $U$ below).}\]
	Let $H^1_{0,\Gamma}(\R^3)$ be the closure of $C^\infty_c(\R^3 \setminus \Gamma)$ in $H^1(\R^3)$. The {\em multi-trace space} $\mathbb{H}^{1/2}(\Gamma)$ is the Hilbert space defined by the quotient \cite[eq. (5.1)]{claeys2013integral}
	\[\mathbb{H}^{1/2}(\Gamma) \isdef H^1(\R^3 \setminus \Gamma) / H^1_{0,\Gamma}(\R^3) \,.\]
	The {\em (Dirichlet) multi-trace operator} is defined as the canonical surjection 
	\begin{equation}
		\label{defMultiTrace}
		\gamma: H^1(\R^3 \setminus \Gamma) \to \mathbb{H}^{1/2}(\Gamma)
	\end{equation} 
	associated to this quotient space. By definition of quotients of Hilbert spaces,
		\begin{align*}
		\norm{u}_{\mathbb{H}^{1/2}(\Gamma)} &= \min \enstq{\norm{V}_{H^1(\R^3 \setminus \Gamma)}}{\gamma V = u,\,\,\, V \in H^1(\R^3 \setminus \Gamma)}\\
		& = \min\enstq{\norm{U + U_0}_{H^1(\R^3 \setminus \Gamma)}}{U_0 \in H^1_{0,\Gamma}(\R^3)} \quad \forall U \in H^1(\R^3 \setminus \Gamma)\textup{ s.t. } \gamma U = u\,.
	\end{align*}
	Let $H^{1/2}([\Gamma])$ be the {\em single-trace space}, which is the closed subspace of $\mathbb{H}^{1/2}(\Gamma)$ defined by 
	\[H^{1/2}([\Gamma]) \isdef \gamma(H^1(\R^3))\,.\] 
	In turn, the {\em jump space} $\widetilde{H}^{1/2}(\Gamma)$ is the Hilbert space defined by the quotient \cite[Proposition 6.8]{claeys2013integral}
	\[\widetilde{H}^{1/2}([\Gamma]) \isdef \mathbb{H}^{1/2}(\Gamma) / H^{1/2}([\Gamma])\,,\]
	and the {\em jump operator} $[\cdot]_{\Gamma}$ is defined as the corresponding canonical surjection. We will also conveniently write $[u]_\Gamma$ as short for $[\gamma(u)]_\Gamma$ when $u \in H^1(\R^3 \setminus \Gamma)$. 
	
	With a similar construction, where the role of the gradient is played by the divergence, one defines $H(\div,\R^3)$, $H_{0,\Gamma}(\div,\R^3)$ and the quotient space
	\[\mathbb{H}^{-1/2}(\Gamma) \isdef H(\div,\R^3 \setminus \Gamma) / H_{0,\Gamma}(\div,\R^3)\,,\]
	and $\pi_{n}$ refers to the associated canonical surjection. Again, $H^{-1/2}([\Gamma])$ is the single-trace space of $\mathbb{H}^{-1/2}(\Gamma)$, defined by 
	\[H^{-1/2}([\Gamma]) \isdef \pi_n \left(H(\div,\R^3)\right)\,.\] 
	A well-defined bilinear form is obtained by \cite[eq. (5.2)]{claeys2013integral}
	\[\forall (u,v) \in \mathbb{H}^{1/2}(\Gamma) \times \mathbb{H}^{-1/2}(\Gamma)\,, \quad \dduality{u}{v} \isdef \sum_{j = 1}^J \int_{\Omega_j} \nabla f(x) \cdot \vec p(x) + f(x) \div \vec p(x)\,dx\,, \]
	where $f$ and $\vec p$ are arbitrary representatives of $u$ and $v$, i.e.,
	$u = \gamma(f)$, $v = \pi_n(\vec p)$. This realizes an isometric duality pairing in the sense that \cite[Prop. 5.1]{claeys2013integral}
	\begin{equation}
		\label{eq:isometricPairing}
		\norm{u}_{\mathbb{H}^{1/2}(\Gamma)} = \sup_{\varphi \in \mathbb{H}^{-1/2}(\Gamma)} \frac{\dduality{u}{\varphi}}{\norm{\varphi}_{\mathbb{H}^{-1/2}(\Gamma)}}\,, \quad \norm{v}_{\mathbb{H}^{-1/2}(\Gamma)} = \sup_{\psi \in \mathbb{H}^{1/2}(\Gamma)} \frac{\dduality{u}{\psi}}{\norm{\psi}_{\mathbb{H}^{1/2}(\Gamma)}}\,.
	\end{equation}
	Moreover, the single-trace spaces $H^{\pm 1/2}([\Gamma])$ are each other's {\em polar} under this bilinear form \cite[Proposition 6.3]{claeys2013integral}, i.e., 
	\begin{equation}
		\label{eq:polarity1}
		H^{1/2}([\Gamma]) = \enstq{u \in \mathbb{H}^{1/2}(\Gamma)}{\dduality{u}{v} = 0\,,\,\, \forall v \in H^{-1/2}([\Gamma])}\,,
	\end{equation}
	\begin{equation}
		\label{eq:polarity2}
		H^{-1/2}([\Gamma]) = \enstq{v \in 	\mathbb{H}^{-1/2}(\Gamma)}{\dduality{u}{v} = 0\,,\,\, \forall u \in H^{1/2}([\Gamma])}\,.
	\end{equation}
	We further introduce the Hilbert space
	$$H^1(\Delta,U) \isdef \enstq{u \in H^1(U)}{\nabla u \in H(\div,U)}\,,$$
	with the norm $\norm{u}_{H^1(\Delta,U)}^2 \isdef \norm{u}_{H^1(U)}^2 + \norm{\nabla u}_{H(\div,U)}^2$, and let $H^1_{\rm loc}(\Delta,U)$ be the space of functions $u$ such that $\chi u \in H^1(\Delta,U)$ for any smooth compactly supported functions $\chi$. 
%
%

	\subsection{Exterior Neumann boundary value problem}
	
 	We seek the solution $\mathcal{U}$ of the solution of the Boundary Value Problem (BVP)
	\begin{equation}
		\label{PDE}
		\Delta \mathcal{U} = 0 \quad \textup{ in } \R^3 \setminus {\Gamma}
	\end{equation} 
	with a prescribed normal derivative on $\Gamma$, and the decay condition
	\begin{equation}
		\label{decay}
		\mathcal{U}(x) = O\left(\frac{1}{\norm{x}}\right)\,,
	\end{equation}
	uniformly as $x \to \infty$, where $\norm{x}$ is the Euclidean norm of $x$. To prescribe the boundary condition, we supply a sufficiently regular vector field $\vec g$ on $\R^3$ such that the normal component of $\nabla \mathcal{U}$ agrees with $\vec g$ on $\Gamma$. More formally, we impose that 
	\begin{equation}
		\label{neumannCond}
		\pi_n(\nabla \mathcal{U}) = \pi_n(\vec g) \quad \textup{ in } \mathbb{H}^{-1/2}(\Gamma)\,,
	\end{equation}
	where $\vec g \in H(\div,\R^3)$. In particular, this requires the normal derivative of $\mathcal{U}$ to be ``continuous" across $\Gamma$, i.e., to be an element of the single-trace space $H^{-1/2}([\Gamma])$. 
	
	
	\subsection{Variational hypersingular boundary integral equation}
	We seek the solution $\mathcal{U}$ of the Neumann boundary value problem in the form of a double-layer potential. 
	\begin{definition}[Double-Layer potential]
		\label{def:DL}
		For $u \in \mathbb{H}^{1/2}(\Gamma)$, the {\em double-layer potential} $\DL u$ is defined by  
		\[\forall x \in \R^3 \setminus \Gamma\,, \quad \DL u(x) \isdef x \mapsto \dduality{u}{\pi_n (\nabla \mathcal{G}_{x})}\,,\]
		where
		\[\forall y \in \R^3\,, \quad \mathcal{G}_{x}(y) \isdef \begin{cases}
			0 & \textup{if } y = x,\\[0.5em]
			\dfrac{\chi_{x}(y)}{4\pi \norm{x - y}} & \textup{otherwise, }
		\end{cases}\]
		and $\chi_{x}$ is any smooth compactly supported function equal to $0$ in a neighborhood of $x$, and $1$ in a neighborhood of $\Gamma$. 
	\end{definition}
	The value of $\DL u(x)$ is independent of the particular choice of cutoff function, and \Cref{lemrepDL} gives a concrete integral representation of this operator which generalizes the commonly known formula. Furthermore, $\DL$ maps $\mathbb{H}^{1/2}(\Gamma)$ to $H^1_{\rm loc}(\Delta, \R^3\setminus \Gamma)$ continuously, satisfies the property
	\begin{equation}
		\label{kerDL}
		\quad \DL u = 0 \quad \forall u\in H^{1/2}([\Gamma])\,,
	\end{equation}
	and the {\em jump relation} \cite[Prop. 8.5]{claeys2013integral}
	\begin{equation}
		\label{jumpRel}
		 [\DL u]_\Gamma = [u]_\Gamma \quad \forall u \in \mathbb{H}^{1/2}(\Gamma)\,.
	\end{equation} 
	Finally, note that by the property \eqref{kerDL}, $\DL$ induces a linear continuous map on $\widetilde{H}^{1/2}([\Gamma])$, again denoted by $\DL$. For $u \in \widetilde{H}^{1/2}(\Gamma) $ and $U \subset \R^3 \setminus \Gamma$ an open set, $(\DL u)_{|U} \in C^\infty(U)$ and $\Delta(\DL u) = 0$ on $U$. Moreover, $\DL u$ satisfies the decay condition \eqref{decay}. 
	
	\begin{proposition}[{Hypersingular operator \cite[Section 8]{claeys2013integral}}]
		\label{def:Hypersingular}
		The {\em hypersingular operator} $\W \isdef \pi_n \circ \nabla \circ \DL$ is well-defined and continuous from $\mathbb{H}^{1/2}(\Gamma) \to {H}^{-1/2}([\Gamma])$. 
	\end{proposition}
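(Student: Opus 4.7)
The plan is to verify the three separate claims implicit in the proposition: that the composition $\pi_n \circ \nabla \circ \DL$ is well-defined as a map into $\mathbb{H}^{-1/2}(\Gamma)$, that it is continuous, and that its image lies in the single-trace subspace $H^{-1/2}([\Gamma])$.

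First, to ensure $\pi_n$ can be applied, I would check that $\nabla \DL u \in H(\div,\R^3 \setminus \Gamma)$ globally. Since $\DL$ maps continuously into $H^1_{\rm loc}(\Delta,\R^3\setminus \Gamma)$ and $\Delta \DL u = 0$ pointwise on $\R^3 \setminus \Gamma$, the only remaining point is the global $L^2$-integrability of $\nabla \DL u$. This follows from the decay condition \eqref{decay} already known to hold for $\DL u$ and, more precisely, from the dipole-type integral kernel in \Cref{lemrepDL}, which yields $\nabla \DL u(x) = O(\norm{x}^{-3})$ at infinity. Continuity of $\W : \mathbb{H}^{1/2}(\Gamma) \to \mathbb{H}^{-1/2}(\Gamma)$ is then immediate from the continuity of the three factors $\DL$, $\nabla$, and $\pi_n$.

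To show that $\W u$ lies in the single-trace space $H^{-1/2}([\Gamma])$, I would invoke the polarity relation \eqref{eq:polarity2} and verify that $\dduality{\gamma(W)}{\W u} = 0$ for every $W \in H^1(\R^3)$. The definition of the duality pairing combined with $\Delta \DL u = 0$ on each $\Omega_j$ collapses the left-hand side to $\int_{\R^3} \nabla W \cdot \nabla \DL u\, dx$. Approximating $W$ by $W_n \in C^\infty_c(\R^3)$ and applying Green's identity on each $\Omega_j$ intersected with a large ball, the volume terms vanish by harmonicity, the contributions on the large sphere vanish as its radius tends to infinity by the decay of $\nabla \DL u$, and the artificial interfaces strictly inside $\R^3 \setminus \Gamma$ cancel pairwise because $\DL u$ is smooth across them. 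What remains is a sum over the triangular faces $F$ of $\mathcal{M}_\Gamma$ of the form $\sum_F \int_F W_n \,[\partial_n \DL u]_F \,d\sigma$.

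The main obstacle is therefore to establish that $[\partial_n \DL u]_F = 0$ face by face, i.e., that the normal component of $\nabla \DL u$ has no jump across $\Gamma$; this is the multi-screen analogue of the classical continuity of the normal derivative of a double-layer potential. I would prove it by exploiting the representation $\DL u(x) = \dduality{u}{\pi_n(\nabla \mathcal{G}_x)}$: for every $x \notin \Gamma$, the function $\mathcal{G}_x$ is smooth in a neighborhood of $\Gamma$, so $\pi_n(\nabla \mathcal{G}_x)$ already belongs to the single-trace space $H^{-1/2}([\Gamma])$, and a symmetry/adjoint argument after differentiating in $x$ transfers this single-trace property to $\W u$. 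A more elementary alternative is density: for sufficiently smooth $u$ the representation of \Cref{lemrepDL} reduces to the classical surface integral whose normal derivative is well known to be continuous across each smooth face, and the $\mathbb{H}^{-1/2}(\Gamma)$-continuity of $\W$ already established then propagates the single-trace property to all of $\mathbb{H}^{1/2}(\Gamma)$. Either way, the detailed verification is essentially carried out in \cite[Section 8]{claeys2013integral}.
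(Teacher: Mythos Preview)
The paper does not supply its own proof of this proposition; it is quoted directly from \cite[Section~8]{claeys2013integral}. Your sketch is a reasonable reconstruction of the standard argument and the overall strategy---continuity of the three factors, then the single-trace property via polarity and the vanishing of the jump of the normal derivative of $\DL u$---is the right one.

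Two minor points deserve attention. First, the duality pairing $\dduality{\cdot}{\cdot}$ in this paper is a sum over the \emph{bounded} subdomains $\Omega_1,\dots,\Omega_J$ only (see the displayed definition just above \eqref{eq:isometricPairing}), so it does not literally collapse to $\int_{\R^3}\nabla W\cdot\nabla\DL u\,dx$: a boundary term on the outer cube $\partial\Omega$ survives your integration by parts. This is easy to fix---replace one of the representatives by its product with a smooth cutoff equal to $1$ near $\Gamma$ and $0$ near $\partial\Omega$, which is legitimate since the pairing is independent of the representative---but as written the step is not correct. The same cutoff trick also lets you bypass the global $L^2$-integrability of $\nabla\DL u$ altogether, since $\pi_n$ only depends on the field near $\Gamma$; this is slightly cleaner than your decay argument, which is correct but requires uniform control of $\|\pi_n(\nabla\mathcal G_x)\|_{\mathbb H^{-1/2}(\Gamma)}$ for large $|x|$ rather than just the pointwise kernel estimate you cite.

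Second, your first proposed route to $[\partial_n\DL u]_F=0$---differentiate $\DL u(x)=\dduality{u}{\pi_n(\nabla\mathcal G_x)}$ in $x$ and invoke an unspecified ``symmetry/adjoint argument''---is too vague to be assessed as it stands. The density route you describe afterwards (classical face-by-face continuity of $\partial_n\DL u$ for smooth data, then closure in $\mathbb H^{-1/2}(\Gamma)$ using the continuity already established) is the one actually carried out in the cited reference and is the argument to keep.
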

	Let $b: \mathbb{H}^{1/2}(\Gamma) \times \mathbb{H}^{1/2}(\Gamma) \to \mathbb{R}$ be the bilinear form defined by 
	\begin{equation}
		b(u,v) \isdef \dduality{v}{\W u}\,, \quad \forall u,v \in \mathbb{H}^{1/2}(\Gamma)\,.
	\end{equation}	
	Then, by \Cref{idDLDL}, the bilinear form $b$ is symmetric and positive, but it is only semi-definite; due to the relation \eqref{kerDL}, it satisfies $b(u,\cdot) = 0$ for all $u \in H^{1/2}([\Gamma])$.
	However, we may define a new bilinear form $a: \widetilde{H}^{1/2}([\Gamma]) \times \widetilde{H}^{1/2}([\Gamma]) \to \R$ by quotienting $b$ with respect to $H^{1/2}(\Gamma)$, as follows.
	\begin{definition}[The hypersingular bilinear form]	
		\label{def:defa}
		For any $\varphi,\psi \in \widetilde{H}^{1/2}([\Gamma])$, we define
		\begin{equation}
			\label{eq:defa}
			a(\varphi,\psi) \isdef b(u,v) \quad \textup{where } u,v \in 	\mathbb{H}^{1/2}(\Gamma) \textup{ satisfy } [u]_\Gamma = \varphi\,,\,\, [v]_\Gamma = \psi\,.
		\end{equation}
	\end{definition}
	This definition is valid (i.e., it does not depend on the choice of $u$ and $v$) since $H^{1/2}([\Gamma])$, which is the kernel of $[\cdot]_\Gamma$ in $\mathbb{H}^{1/2}(\Gamma)$, is also in the kernel of $b$.
	From the mapping properties of $\W$ and \Cref{thmWpos}, we immediately obtain the following result.
	\begin{theorem}[Coercivity of the hypersingular bilinear form]
		\label{lemequivInner}
		The hypersingular bilinear form $a$ is continuous, positive definite and bounded from below. It induces an equivalent inner product on $\widetilde{H}^{1/2}(\Gamma)$, i.e. there exist constants $c_W,C_W >0$ such that 
		\[\forall \varphi \in \widetilde{H}^{1/2}([\Gamma])\,, \quad c_W \norm{\varphi}^2_{\widetilde{H}^{1/2}([\Gamma])} \leq a(\varphi,\varphi) \leq C_W \norm{\varphi}^2_{\widetilde{H}^{1/2}([\Gamma])}\,.\]
	\end{theorem}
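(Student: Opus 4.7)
The plan is to reduce the three claims to properties already established for the bilinear form $b$ on $\mathbb{H}^{1/2}(\Gamma)$, namely continuity of the hypersingular operator $\W$ (\Cref{def:Hypersingular}), positivity of $b$ modulo $H^{1/2}([\Gamma])$ (\Cref{thmWpos}), and the fact that $H^{1/2}([\Gamma])$ is in the kernel of $b$. Then the quotient structure $\widetilde{H}^{1/2}([\Gamma]) = \mathbb{H}^{1/2}(\Gamma) / H^{1/2}([\Gamma])$ together with the isometric duality pairing \eqref{eq:isometricPairing} should transfer each property to $a$.

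\textbf{Continuity.} Given $\varphi,\psi \in \widetilde{H}^{1/2}([\Gamma])$, I would first fix arbitrary representatives $u,v \in \mathbb{H}^{1/2}(\Gamma)$ with $[u]_\Gamma = \varphi$ and $[v]_\Gamma = \psi$. Using the isometric duality \eqref{eq:isometricPairing} and continuity of $\W$, one immediately estimates
\[|b(u,v)| = |\dduality{v}{\W u}| \leq \norm{v}_{\mathbb{H}^{1/2}(\Gamma)} \norm{\W u}_{\mathbb{H}^{-1/2}(\Gamma)} \leq C_W \norm{u}_{\mathbb{H}^{1/2}(\Gamma)} \norm{v}_{\mathbb{H}^{1/2}(\Gamma)}.\]
Here I use that $\W u \in H^{-1/2}([\Gamma]) \subset \mathbb{H}^{-1/2}(\Gamma)$ isometrically. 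Since $b(u + u_0, v + v_0) = b(u,v)$ for all $u_0, v_0 \in H^{1/2}([\Gamma])$ (by symmetry of $b$ together with \eqref{kerDL}), I may take the infimum over all admissible representatives separately, which by definition of the quotient norm yields $|a(\varphi,\psi)| \leq C_W \norm{\varphi}_{\widetilde{H}^{1/2}([\Gamma])} \norm{\psi}_{\widetilde{H}^{1/2}([\Gamma])}$.

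\textbf{Coercivity and positive definiteness.} The key ingredient is \Cref{thmWpos}, which I expect to supply a lower bound of the form $b(u,u) \geq c_W \inf_{u_0 \in H^{1/2}([\Gamma])} \norm{u + u_0}^2_{\mathbb{H}^{1/2}(\Gamma)}$ for every $u \in \mathbb{H}^{1/2}(\Gamma)$. Recognising the right-hand side as $c_W \norm{[u]_\Gamma}^2_{\widetilde{H}^{1/2}([\Gamma])}$ and applying it to any representative $u$ of a given $\varphi$ gives $a(\varphi,\varphi) \geq c_W \norm{\varphi}^2_{\widetilde{H}^{1/2}([\Gamma])}$, which simultaneously yields positive definiteness and the lower bound. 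Symmetry of $a$ is inherited from symmetry of $b$ (\Cref{idDLDL}), and the bounds together show that $a$ is indeed an equivalent inner product on $\widetilde{H}^{1/2}([\Gamma])$.

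There is essentially no obstacle: the whole statement is a transcription of the mapping and coercivity properties of $\W$ through the quotient. The only subtle point is verifying that $b(u,v)$ is independent of the choice of both representatives; this is the reason \eqref{eq:defa} is well posed, and it follows once one observes that the one-sided kernel property $b(\cdot, v_0) = 0$ for $v_0 \in H^{1/2}([\Gamma])$ (dual to \eqref{kerDL}) is implied by the symmetry supplied by \Cref{idDLDL}.
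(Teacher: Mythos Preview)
Your proposal is correct and follows exactly the route the paper indicates: the paper's own proof is the single sentence ``From the mapping properties of $\W$ and \Cref{thmWpos}, we immediately obtain the following result,'' and you have simply spelled out those two ingredients (continuity of $\W$ for the upper bound, \Cref{thmWpos} for the lower bound) together with the observation that everything passes to the quotient because $H^{1/2}([\Gamma])$ lies in the kernel of $b$.
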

	We introduce the linear form 
	\begin{equation}
		\label{defRhs}
		l_{g}: \widetilde{H}^{1/2}(\Gamma)\ni [u]_\Gamma \mapsto \dduality{\gamma u}{\pi_n(\vec g)}\,.
	\end{equation}
	This is a well-defined continuous linear form by the polarity property \eqref{eq:polarity1}.
	\begin{theorem}[Variational formulation of the the Laplace Neumann boundary value problem]
		\label{thmBVP}
		The variational problem
		\begin{equation}
			\label{varPb}
			\textup{Find } \varphi \in \widetilde{H}^{1/2}([\Gamma]) \textup{ such that } a(\varphi,\psi) = l_g(\psi) \textup{ for all } \psi \in \widetilde{H}^{1/2}([\Gamma])\,.
		\end{equation}
		has a unique solution $\varphi^*$, and $\mathcal{U} = \DL \varphi^*$ is the unique solution of the BVP
		\begin{equation}
			\label{BVP}
			\begin{cases}
				\Delta \mathcal{U} = 0 & \textup{in } \R^3 \setminus \Gamma\,,\\[0.3em]
				\mathcal{U} = O\left(\frac{1}{\norm{x}}\right) & \textup{uniformly for } x \to \infty\,,\\[0.3em]
				\pi_n(\nabla \mathcal{U}) = \pi_n(\vec g) & \textup{in } \mathbb{H}^{-1/2}(\Gamma)\,.
			\end{cases}
		\end{equation}
	\end{theorem}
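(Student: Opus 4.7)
The plan is to combine Lax--Milgram for the variational problem with a classical energy argument for uniqueness of the BVP.

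First, I would dispatch the existence and uniqueness of $\varphi^{*}$ by a direct application of the Lax--Milgram lemma on the Hilbert space $\widetilde{H}^{1/2}([\Gamma])$. \Cref{lemequivInner} supplies continuity and coercivity of the symmetric bilinear form $a$, while the linear form $l_{g}$ introduced in \eqref{defRhs} was already shown to be a continuous functional via the polarity property \eqref{eq:polarity1}. No further work is required at this stage.

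Next, I would show that $\mathcal{U} \isdef \DL\varphi^{*}$ satisfies \eqref{BVP}. Harmonicity in $\R^{3}\setminus\Gamma$ and the uniform decay at infinity were already recorded just after \eqref{jumpRel}, so only the Neumann condition \eqref{neumannCond} calls for an argument. Fix any $v \in \mathbb{H}^{1/2}(\Gamma)$, set $\psi \isdef [v]_{\Gamma}$, and pick any representative $u^{*} \in \mathbb{H}^{1/2}(\Gamma)$ of $\varphi^{*}$. Chaining the definitions produces
\[
\dduality{v}{\W u^{*}} \;=\; b(u^{*},v) \;=\; a(\varphi^{*},\psi) \;=\; l_{g}(\psi) \;=\; \dduality{v}{\pi_{n}(\vec g)}.
\]
Since $v$ is arbitrary in $\mathbb{H}^{1/2}(\Gamma)$, the isometric duality \eqref{eq:isometricPairing} forces $\W u^{*} = \pi_{n}(\vec g)$ in $\mathbb{H}^{-1/2}(\Gamma)$. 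The relation \eqref{kerDL} ensures that $\DL u^{*}$ depends on $u^{*}$ only through $\varphi^{*}$, so $\W u^{*} = \pi_{n}(\nabla\DL\varphi^{*}) = \pi_{n}(\nabla\mathcal{U})$ and \eqref{neumannCond} follows. I expect this to be the main technical subtlety: a priori $\W u^{*}$ lives in the strictly smaller single-trace subspace $H^{-1/2}([\Gamma])$ by \Cref{def:Hypersingular}, but the comparison with $\pi_{n}(\vec g)$ must be made in the larger multi-trace space $\mathbb{H}^{-1/2}(\Gamma)$, and this is exactly what the isometric pairing \eqref{eq:isometricPairing} achieves.

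Finally, for uniqueness of the BVP solution, let $\mathcal{W}$ solve the homogeneous BVP and apply Green's first identity on $B_{R}\setminus\Gamma$, with $B_{R}$ a large ball. The generalized integration by parts consistent with the definition of $\dduality{\cdot}{\cdot}$ yields
\[
\int_{B_{R}\setminus\Gamma}\abs{\nabla\mathcal{W}}^{2}\,dx \;=\; \dduality{\gamma\mathcal{W}}{\pi_{n}(\nabla\mathcal{W})} + \int_{\partial B_{R}}\mathcal{W}\,\partial_{r}\mathcal{W}\,dS.
\]
The first term on the right vanishes because $\pi_{n}(\nabla\mathcal{W})=0$; the boundary integral tends to zero as $R\to\infty$ thanks to the decay $\mathcal{W}=O(\norm{x}^{-1})$ together with the sharper bound $\abs{\nabla\mathcal{W}}=O(\norm{x}^{-2})$ which follows from interior estimates for harmonic functions with such decay. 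Hence $\nabla\mathcal{W}\equiv 0$ on $\R^{3}\setminus\Gamma$, and the connectedness assumption made in \Cref{sec:polyMS}, combined with the decay at infinity, forces $\mathcal{W}\equiv 0$.
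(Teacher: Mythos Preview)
Your proposal is correct and mirrors the paper's own proof: Lax--Milgram (the paper invokes Riesz, which is equivalent here) for existence and uniqueness of $\varphi^{*}$, the duality pairing together with \eqref{eq:isometricPairing} to verify the Neumann condition, and an energy argument on expanding balls for uniqueness of the BVP. The only cosmetic differences are that the paper rephrases $\pi_{n}(\nabla\mathcal{W})=0$ as $\nabla\mathcal{W}\in H_{0,\Gamma}(\div,\R^{3})$ before integrating by parts (which makes the vanishing of the $\Gamma$-contribution slightly more transparent than writing it as $\dduality{\gamma\mathcal{W}}{\pi_{n}(\nabla\mathcal{W})}$), and it obtains the $O(\norm{x}^{-2})$ gradient decay from the layer-potential representation rather than from interior estimates.
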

	The proof is given in \Cref{sec:proofPropertiesW}.

	\subsection{Weakly singular integral representations}
	\label{sec:weakSingRep}
	
	We denote by $u_j \isdef u_{|\Omega_j}$ the restriction of $u$ to $\Omega_j$. Let $\vec n_j$ be the outward pointing unit normal vector of $\partial \Omega_j$ and $d\sigma_j$ the surface measure on $\partial \Omega_j$. Recall that $\gamma_j: H^1(\Omega_j) \to L^2(\partial \Omega_j)$ is the point trace operator. Let $\nabla_j$ be the tangential gradient on $\partial \Omega_j$, and $\curl_j \isdef \vec n_j \times \nabla_j$ the surface curl on $\partial \Omega_j$.
	
	On the boundaries $\partial \Omega_j$ of the Lipschitz domains $\Omega_j$, the spaces $H^1(\partial \Omega_j)$ are defined with the help of coordinate charts, see e.g. \cite[p. 96]{McLean}. The tangential gradient $\nabla_{j}$ on $\partial \Omega_j$ extends uniquely to a continuous map $H^1(\partial \Omega_j) \to (L^2(\partial \Omega_j))^3$. We denote by $\curl_j: H^1(\partial \Omega_j) \to (L^2(\partial \Omega_j))^3$ the operator defined by 
	$\curl_j u \isdef \vec n_j \times \nabla_j u$. 
	\begin{theorem}[Weakly singular representation of the hypersingular operator]
		\label{thmWweak}
		Let $u,v \in H^1(\R^3\setminus \Gamma)$ and suppose that $\gamma_j u_j$, $\gamma_j v_j$ belong to $H^1(\partial \Omega_j)$. Then there holds
		\[a\left([u]_\Gamma,[v]_\Gamma\right) = \sum_{j = 1}^{J}\sum_{k = 1}^J \iint_{\Gamma_j\times \Gamma_k} \frac{\vec  \curl_j\, \gamma_j u_j(x) \cdot \curl_k\, \gamma_k u_k(x')}{4\pi \norm{x - x'}}\,d\sigma_j(x) d\sigma_k(x')\,,\]
		with $\Gamma_j := \partial \Omega_j \cap \Gamma$ as in \Cref{def:polygonalMS}. 
	\end{theorem}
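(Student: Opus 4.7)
The plan is to derive the stated formula as a multiscreen analogue of the classical Maue identity for the hypersingular operator on screens. Starting from \Cref{def:defa} and \Cref{def:Hypersingular}, I would first write
\[
a([u]_\Gamma,[v]_\Gamma) \;=\; b(u,v) \;=\; \dduality{\gamma v}{\pi_n(\nabla \DL u)}\,.
\]
Since $\DL u$ is harmonic on each $\Omega_j$ (so $\div \nabla \DL u = 0$ there), one may take $\nabla \DL u$ itself as a representative in $H(\div,\R^3\setminus\Gamma)$ and invoke the explicit formula for the pairing to obtain
\[
a([u]_\Gamma,[v]_\Gamma) \;=\; \sum_{j=1}^J \int_{\Omega_j} \nabla v \cdot \nabla \DL u \,dx\,.
\]

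The next step is a vector-potential representation of $\nabla \DL u$. Using \Cref{lemrepDL} to represent $\DL u(x)$ for $x\notin\Gamma$ as a sum of surface integrals over the $\Gamma_j$'s involving the kernel $\partial_{n_y} G(x,y)$ with $G(x,y)=1/(4\pi\|x-y\|)$, and integrating by parts tangentially on each Lipschitz piece $\Gamma_j\subset\partial\Omega_j$ (this is where the extra regularity $\gamma_j u_j\in H^1(\partial\Omega_j)$ is essential, making $\curl_j\gamma_j u_j\in L^2$ well-defined), I expect to obtain the Maue-type identity
\[
\nabla \DL u(x) \;=\; -\curl_x \sum_{k=1}^J \int_{\Gamma_k} G(x,y)\, \curl_k \gamma_k u_k(y)\,d\sigma_k(y)\,, \qquad x\notin\Gamma\,.
\]
Obtaining this identity rigorously is the delicate part: the surface integration by parts naturally produces line integrals on $\partial\Gamma_j$, and one must check that these line terms cancel when summed over $j$, using coherent orientations and the fact that $\gamma_j u_j$ extends to all of $\partial\Omega_j$. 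A density argument starting from smooth $u$ should work, together with care about the junction lines.

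Given the identity for $\nabla \DL u$ as $-\curl_x \vec A$, I would combine it with the vector calculus identity $\nabla v\cdot \curl \vec A = \div(\vec A\times \nabla v) + \vec A \cdot \curl \nabla v = \div(\vec A\times \nabla v)$ on each $\Omega_j$, then apply the divergence theorem:
\[
\int_{\Omega_j} \nabla v\cdot \nabla \DL u\,dx \;=\; -\int_{\partial\Omega_j} (\vec A\times \nabla v)\cdot \vec n_j\,d\sigma_j \;=\; \int_{\partial\Omega_j} \vec A\cdot \curl_j \gamma_j v_j\,d\sigma_j\,,
\]
using $(\vec A\times \nabla v)\cdot \vec n_j = -\vec A\cdot (\vec n_j\times \nabla v) = -\vec A\cdot \curl_j\gamma_j v_j$. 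Summing over $j$, the contributions on facets of $\partial\Omega_j$ not lying in $\Gamma$ cancel pairwise between neighboring subdomains, because for $v\in H^1(\R^3\setminus\Gamma)$ the traces $\gamma_j v_j$ agree across such interfaces (by definition of $H^1_{0,\Gamma}$) while the outward normals have opposite sign, so $\curl_j\gamma_j v_j$ changes sign. Only the integrals over the $\Gamma_j$'s survive, and substituting the expression for $\vec A$ yields precisely the claimed double integral.

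\textbf{Main obstacle.} The crux is the Maue identity for the multiscreen double layer and the accompanying surface integration by parts: one must control the line-integral boundary contributions on $\partial\Gamma_j$ and verify that the decomposition-dependent interior facet terms really do cancel. I would dispatch these points by first proving the identity for smooth densities where all line terms vanish, and then extending to $H^1(\partial\Omega_j)$-traces by continuity of both sides in the appropriate norms.
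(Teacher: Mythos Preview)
Your overall strategy matches the paper's: write $a([u]_\Gamma,[v]_\Gamma)$ as $\sum_j \int_{\Omega_j}\nabla v\cdot\nabla \DL u$, replace $\nabla \DL u$ by the curl of a vector single-layer potential (the Maue step), apply the divergence theorem and the triple-product identity, and reduce the resulting boundary integrals to $\Gamma_j$ by pairwise cancellation across interior interfaces. The two proofs diverge precisely at the point you flag as the main obstacle.

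You propose to establish the Maue identity directly on the open pieces $\Gamma_k$, which generates line integrals on $\partial\Gamma_k$ whose cancellation must be argued. The paper sidesteps this entirely: it first writes $\DL(\gamma u) = \sum_j \DL_j\varphi_j$ with $\varphi_j = \gamma_j u_j$ on the \emph{closed} surfaces $\partial\Omega_j$, and invokes McLean's identity $\curl\,\textup{SL}_j(\curl_j\varphi_j) = -\nabla\DL_j\varphi_j$, which holds on closed Lipschitz boundaries with no line terms. The reduction from $\partial\Omega_j$ to $\Gamma_j$ is postponed to the very last step, where it is a simple measure-theoretic cancellation of opposite normals on shared interior facets (the paper's \Cref{lemnjnk}). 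This buys a cleaner argument with no edge contributions to track; your route is workable but forces you to control those contributions explicitly.

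The second difference concerns the passage from smooth data to $H^1(\partial\Omega_j)$-traces. You invoke ``continuity of both sides in the appropriate norms'' without naming them; the paper is careful here. It first proves the identity on the dense class $X^\infty$ (\Cref{thm:densityXinfty}), then packages the right-hand side as a bilinear form continuous on $H^1(\R^3\setminus\Gamma)\times H^1(\R^3\setminus\Gamma)$ by interpreting $\curl_j\gamma_j u_j$ as $\vec n_j\times\gamma_j\nabla u_j\in\vec H^{-1/2}(\div_j,\partial\Omega_j)$ and using the Buffa--Ciarlet continuous extension of the vector single-layer potential on that space. When the traces are additionally in $H^1(\partial\Omega_j)$, the duality pairing collapses to the displayed $L^2$ integrals. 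Your sketch would need this functional-analytic scaffolding (or an equivalent) to make the density argument rigorous.
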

	The proof is given in \Cref{sec:proofPropertiesW}. In practice, it is useful to rewrite the above expression in terms of weakly singular integrals over pairs of triangles. To this end, a key ingredient is the so-called ``virtually inflated mesh" introduced in \cite[Section 4]{claeys2021quotient} and studied in more depth in \cite{averseng2022fractured}.
	
	\begin{definition}[Inflated mesh]
		\label{defMstar}
		Assume that $\mathcal{M}_{\Gamma,h}$ and $\mathcal{M}_{\Omega,h}$ are ``compatible" mesh refinements of $\mathcal{M}_{\Gamma}$ and $\mathcal{M}_\Omega$, in the sense that $\mathcal{M}_{\Gamma,h} \subset \mathcal{F}(\mathcal{M}_{\Omega,h}) \setminus \partial \mathcal{M}_{\Omega,h}$. The {\em inflated mesh} $\mathcal{M}^*_{\Gamma,h}$  is defined by 
		\begin{equation}
			\label{def:genMesh}
			\mathcal{M}^*_{\Gamma,h} \isdef \enstq{\mathbf{t} = (T,K) \in \mathcal{M}_{\Gamma,h} \times \mathcal{M}_{\Omega,h}}{T \in \mathcal{F}(K)}\,.
		\end{equation}
	\end{definition}
	The elements $\vec t = (T,K) \in \mathcal{M}^*_{\Gamma,h}$ model the triangles $T$ of $\mathcal{M}_\Gamma$ attached to a ``side" of the surface $\Gamma$ (the side determined by the position of the tetrahedron $K$). 
	The inflated mesh thus contains twice as many elements as $\mathcal{M}_{\Gamma,h}$: each triangle $T \in \mathcal{M}_{\Gamma}$ occurs exactly in two pairs $(T,K^+)$ and $(T,K^-)$. The inflated mesh $\mathcal{M}^*_{\Gamma,h}$ can be equivalently represented as a set of \emph{oriented triangles}, by associating to $\vec t = (T,K) \in \mathcal{M}^*_\Gamma$ the triangle $T$ oriented by the normal vector pointing inside $\abs{K}$ (recall that $\abs{S}$ is the convex hull of the simplex $S$). We denote this normal vector by $\vec n_{\vec t}$.  We also write $\abs{\vec t}$ as short for $\abs{T}$.	Let $\gamma_{\vec t}$ be the trace operator from the tetrahedron $\abs{K}$ to its face $\abs{T}$, $\nabla_{\vec t}$ the tangential gradient on $\abs{\vec t}$, $\curl_{\vec t} \isdef \vec n_{\vec t} \times \nabla_{\vec t}$, and $\sigma_{\vec t}$ the surface measure on $\abs{\vec t}$.
	
	\begin{corollary}
		\label{corW}
		Under the same assumptions as in \Cref{thmWweak}, there holds
		\[a([u]_\Gamma,[v]_\Gamma) = \sum_{\vec t, \vec t' \in \mathcal{M}^*_{\Gamma,h}} \iint_{\abs{\vec t} \times \abs{\vec t'}} \frac{\curl_{\vec t} \gamma_{\vec t} u (x) \cdot \curl_{\vec t'} \gamma_{\vec t'} v(x')}{4\pi \norm{x - x'}} d\sigma_{\vec t}(x)\, d\sigma_{\vec t'}(x')\,.\]
	\end{corollary}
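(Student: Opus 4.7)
}
The plan is to start from the identity of \Cref{thmWweak} and partition each integral $\int_{\Gamma_j}$ over the refined triangulation induced by the inflated mesh $\mathcal{M}^*_{\Gamma,h}$, after which the double sum over subdomain indices $(j,k)$ will be naturally re-indexed as a double sum over $(\vec t,\vec t')\in \mathcal{M}^*_{\Gamma,h}\times \mathcal{M}^*_{\Gamma,h}$.

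The first step is to establish the key combinatorial bijection. For any pair $(T,K)\in \mathcal{M}^*_{\Gamma,h}$, the tetrahedron $K\in \mathcal{M}_{\Omega,h}$ is a refinement of a unique coarse tetrahedron in $\mathcal{M}_{\Omega_j}$ for some $j \in \{1,\dots,J\}$; denote this index $j(\vec t)$. I would then show that for each $j$, the set
\[\{\,|T|\ :\ \vec t=(T,K)\in \mathcal{M}^*_{\Gamma,h},\ j(\vec t)=j\,\}\]
forms a triangulation of $\Gamma_j=\partial \Omega_j\cap \Gamma$. This relies on the fact that each triangle $T\in \mathcal{M}_{\Gamma,h}$ is an interior face of $\mathcal{M}_{\Omega,h}$ (by $\mathcal{M}_{\Gamma,h}\subset \mathcal{F}(\mathcal{M}_{\Omega,h})\setminus \partial\mathcal{M}_{\Omega,h}$), so it is shared by exactly two tetrahedra of $\mathcal{M}_{\Omega,h}$; since $T$ lies on $\Gamma$, and each coarse tetrahedron belongs to exactly one $\Omega_j$, these two tetrahedra belong to two distinct subdomains $\Omega_{j^+(T)}$ and $\Omega_{j^-(T)}$, and $T$ is part of both $\Gamma_{j^+}$ and $\Gamma_{j^-}$.

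Armed with this bijection, I would rewrite the double sum of \Cref{thmWweak} as
\[\sum_{j,k=1}^J \iint_{\Gamma_j\times \Gamma_k}(\cdots)\,d\sigma_j\,d\sigma_k \;=\; \sum_{\vec t,\vec t'\in \mathcal{M}^*_{\Gamma,h}}\iint_{|\vec t|\times |\vec t'|}(\cdots)\,d\sigma_{\vec t}\,d\sigma_{\vec t'},\]
and check that the integrands match. For $\vec t=(T,K)$ with $j(\vec t)=j$, the trace $\gamma_j u_j$ restricted to $|T|$ coincides with $\gamma_{\vec t}u$, since both are the trace of $u_j=u_{|\Omega_j}$ from the side occupied by $|K|\subset\overline{\Omega_j}$. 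The only point that needs attention is the normal vector: the paper's convention sets $\vec n_{\vec t}$ to point into $|K|$, i.e.\ into $\Omega_j$, whereas $\vec n_j$ is the outward unit normal of $\partial \Omega_j$, so $\vec n_{\vec t}=-\vec n_j$ on $|T|$. Consequently $\curl_j(\gamma_j u_j)=-\curl_{\vec t}(\gamma_{\vec t}u)$ on $|T|$, and similarly for $v$. The two sign flips cancel in the dot product, leaving
\[\curl_j\gamma_j u_j(x)\cdot\curl_k\gamma_k v_k(x')=\curl_{\vec t}\gamma_{\vec t}u(x)\cdot \curl_{\vec t'}\gamma_{\vec t'}v(x'),\]
which yields the claim once the kernel $1/(4\pi\|x-x'\|)$ is kept unchanged.

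The only step that requires a little care is the combinatorial one, namely verifying that the inflated mesh produces exactly a disjoint union of triangulations of the $\Gamma_j$'s with the correct association $\vec t\mapsto j(\vec t)$; the interior-face assumption on $\mathcal{M}_{\Gamma,h}$ together with the partitioning hypothesis on $\mathcal{M}_\Omega$ in \Cref{def:polygonalMS} is precisely what makes this work, and once this geometric identification is in hand the rest is a routine re-indexing of the sum.
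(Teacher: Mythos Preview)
Your proposal is correct and follows the same approach implicit in the paper, which treats \Cref{corW} as an immediate re-indexing of the double sum in \Cref{thmWweak} over the inflated mesh without spelling out the details. Your careful tracking of the sign convention $\vec n_{\vec t}=-\vec n_j$ and the resulting cancellation in the dot product is a point the paper leaves tacit, and your verification that the map $\vec t\mapsto j(\vec t)$ partitions $\mathcal{M}^*_{\Gamma,h}$ into triangulations of the $\Gamma_j$ is exactly the combinatorial content needed.
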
 
	From the proofs in \Cref{sec:proofPropertiesW}, we also record the following expression for the double-layer potential.
	\begin{lemma}[Representation of the double-layer potential]
		\label{lemrepDL}
		For all $u \in H^1(\R^3 \setminus \Gamma)$, 
		\begin{align}
			\label{formulaDL}
			\forall x \in \R^3 \setminus \Gamma\,, \quad \DL \gamma u\,(x) &= \sum_{j = 1}^J \int_{\Gamma_j}  \frac{\vec n_j(y) \cdot (y - x)}{4\pi \norm{x - y}^3}  \gamma_j u_j(y)\,d\sigma_j(y)\\
			&= \sum_{\vec t \in \mathcal{M}^*_{\Gamma,h}} \int_{\abs{\vec t}} \frac{\vec n_{\vec t} \cdot (y - x)}{4\pi \norm{x - y}^3} \gamma_{\vec t} u(y) d\sigma_{\vec t}(y)\,.\nonumber
		\end{align}
	\end{lemma}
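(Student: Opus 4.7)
The strategy is to unfold the definition of $\DL\gamma u(x)$ via the duality pairing, reduce the resulting volume integrals to boundary integrals on $\partial\Omega_j$ using Green's first identity, and exploit cancellations on interfaces not contained in $\Gamma$. The two representations in the lemma then correspond to two ways of indexing the surviving surface integrals: either by subdomains or by elements of the inflated mesh.

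First I would use the independence of $\DL\gamma u(x)$ from the choice of cutoff $\chi_x$ to pick a $\chi_x$ that is compactly supported in $\Omega\setminus\{x\}$ and identically $1$ in a small open neighborhood of $\Gamma$. With this choice, $\mathcal{G}_x\in C^\infty(\overline{\Omega_j})$ for each $j=1,\ldots,J$, $\nabla\mathcal{G}_x\in H(\div,\R^3\setminus\Gamma)$, and $\mathcal{G}_x$ vanishes identically on $\partial\Omega$. Using $u$ as a representative of $\gamma u\in\mathbb{H}^{1/2}(\Gamma)$ and $\nabla\mathcal{G}_x$ as a representative of $\pi_n(\nabla\mathcal{G}_x)$, the definitions of $\DL$ and $\dduality{\cdot}{\cdot}$ give
\[\DL\gamma u(x) \;=\; \sum_{j=1}^J\int_{\Omega_j}\bigl(\nabla u_j\cdot\nabla\mathcal{G}_x + u_j\,\Delta\mathcal{G}_x\bigr)\,dx.\]
On each Lipschitz domain $\Omega_j$, Green's first identity (valid for $u_j\in H^1(\Omega_j)$ paired with the smooth $\mathcal{G}_x$) converts the $j$-th volume integral into $\int_{\partial\Omega_j}\gamma_j u_j\,(\vec n_j\cdot\nabla\mathcal{G}_x)\,d\sigma_j$.

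Next I would split $\partial\Omega_j$ into three types of faces: (i) faces contained in $\Gamma_j$, (ii) interior interfaces shared with a neighboring $\Omega_k$ ($k\in\{1,\ldots,J\}$) that are \emph{not} part of $\Gamma$, and (iii) faces lying on $\partial\Omega$. Type (iii) contributes nothing because $\mathcal{G}_x\equiv 0$ there. For type (ii), the shared face is locally contained in the open set $\R^3\setminus\Gamma$, so $u$ is $H^1$ across it and $\gamma_ju_j=\gamma_ku_k$ on the face; since $\vec n_j=-\vec n_k$, the contributions from $\Omega_j$ and $\Omega_k$ to the outer sum cancel pairwise. Only the type-(i) terms survive, yielding $\DL\gamma u(x)=\sum_{j=1}^J\int_{\Gamma_j}\gamma_ju_j\,(\vec n_j\cdot\nabla_y\mathcal{G}_x)\,d\sigma_j$. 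A direct computation of $\nabla_y\bigl(1/(4\pi\|x-y\|)\bigr)$ on $\Gamma$ (where $\chi_x\equiv 1$) then establishes the first equality of \eqref{formulaDL}.

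For the second equality, I would rewrite $\int_{\Gamma_j}$ as a sum over triangles of $\mathcal{M}_\Gamma$ contained in $\partial\Omega_j$. By the partition hypothesis of \Cref{def:polygonalMS}, each triangle $T\in\mathcal{M}_\Gamma$ is a face of exactly two tetrahedra $K^\pm\in\mathcal{M}_\Omega$, which belong respectively to two (possibly equal) subdomain meshes $\mathcal{M}_{\Omega_{j^\pm}}$; thus $T$ appears once in $\Gamma_{j^+}$ and once in $\Gamma_{j^-}$, and these two appearances correspond precisely to the two inflated-mesh elements $(T,K^\pm)\in\mathcal{M}^*_{\Gamma,h}$. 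Matching traces ($\gamma_{(T,K^\pm)}u=\gamma_{j^\pm}u_{j^\pm}$ on $T$) and the orientation relations between $\vec n_{\vec t}$ and $\vec n_{j^\pm}$ transforms the sum over $j$ into the claimed sum over the inflated mesh. The main obstacle, to which I would devote the most care, is the trace-matching argument on type-(ii) interfaces: one must verify that an interior non-$\Gamma$ face between two subdomains does support a single-valued $H^{1/2}$ trace of $u\in H^1(\R^3\setminus\Gamma)$, which reduces to the classical trace theorem applied in a sufficiently small neighborhood of the face.
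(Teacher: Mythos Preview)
Your approach is essentially the one the paper takes: in the proof of \Cref{thmW} the paper unfolds the duality pairing to volume integrals, applies Green's identity on each $\Omega_j$ to obtain $\sum_j \int_{\partial\Omega_j}\gamma_j u_j\,(\vec n_j\cdot\nabla\mathcal G_x)\,d\sigma_j$, and then invokes \Cref{lemnjnk} for the pairwise cancellation on interfaces not in $\Gamma$---exactly your type-(ii) argument. The only minor difference is that the paper first carries this out for $u\in X^\infty$ and relies on density, whereas you work directly with $u\in H^1(\R^3\setminus\Gamma)$, which is legitimate here since $\mathcal G_x$ is smooth near $\Gamma$ and the boundary integrals in \eqref{formulaDL} are continuous in the $H^1$-trace.
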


	\section{Galerkin Boundary Element Method}
	\label{sec:GalerkinBEM}

	\Cref{thmBVP} immediately suggests a method for the numerical resolution of the BVP in \eqref{BVP}. Namely, we find an approximation $\varphi_h^*$ of the solution $\varphi^*$ of the variational problem \eqref{varPb} in a subspace $\widetilde{V}_h(\Gamma) \subset \widetilde{H}^{1/2}([\Gamma])$ by the Galerkin method. We then compute $\mathcal{U}_h = \DL \varphi^*_h$, with $\mathcal{U}_h$ being the proposed approximation of $\mathcal{U}$. 
	
	\subsection{Convergent Galerkin approximation}
	
	\begin{definition}[{Families of subspaces of $H^1(\R^3\setminus \Gamma)$, $\mathbb{H}^{1/2}(\Gamma)$ and  $\widetilde{H}^{1/2}([\Gamma])$}]
		Given a uniformly shape-regular family $(\mathcal{M}_{\Omega,h})_{h > 0}$ 
		of refinements of the mesh $\mathcal{M}_{\Omega}$ introduced in \Cref{sec:polyMS}, 
		such that, for each $h > 0$, the length of the longest edge in $\mathcal{M}_{\Omega,h}$ is bounded by $h$, and 
		\[\forall h > 0\,, \quad \mathcal{M}_{\Gamma,h} \subset \mathcal{F}(\mathcal{M}_{\Omega,h}) \setminus \partial \mathcal{M}_{\Omega,h}\,,\]
		let 
		\begin{equation}
			\label{defVh}
			V_h(\Omega \setminus \Gamma) \isdef \enstq{u \in H^1(\R^3 \setminus \Gamma)}{u = 0 \textup{ on } \Omega_0 \textup{ and } u_{|K} \textup{ is linear } \forall K \in \mathcal{M}_{\Omega,h}}\,,
		\end{equation}
		\begin{equation}
			\label{defDiscreteTraceJump}
			\mathbb{V}_h(\Gamma) \isdef \gamma(V_h(\Omega \setminus \Gamma)) \,, \quad \widetilde{V}_h([\Gamma]) \isdef [V_h(\Omega \setminus \Gamma)]_\Gamma\,.
		\end{equation}
		We call $\mathbb{V}_h(\Gamma) \subset \mathbb{H}^{1/2}(\Gamma)$ the \emph{discrete multi-trace space} and $\widetilde{V}_h([\Gamma]) \subset \widetilde{H}^{1/2}([\Gamma])$ the \emph{discrete jump space}. 
	\end{definition}
	We define the approximation $\varphi^*_h \in \widetilde{V}_h([\Gamma])$ of $\varphi^*$ as the solution of the variational problem
	\begin{equation}
		a(\varphi^*_h,\psi_h) = l_g(\psi_h) \quad \forall \psi_h \in \widetilde{V}_h([\Gamma])\,.
	\end{equation}
	Since the bilinear form $a$ is positive definite, by Céa's lemma, $\varphi^*_h$ is a quasi-optimal approximation of $\varphi^*$, in the sense that there exists a constant $C > 0$ such that for all $h >0$, 
	\begin{equation}
		\label{eq:qo}
		\norm{\varphi^* - \varphi^*_h}_{\widetilde{H}^{1/2}([\Gamma])} \leq C \inf_{\psi_h \in \widetilde{V}_h([\Gamma])}\norm{ \varphi^* - \psi_h}_{\widetilde{H}^{1/2}([\Gamma])}\,.
	\end{equation}
	
	\begin{theorem}[Convergence of $\varphi^*_h$ to $\varphi^*$]
		\label{thmConvPhiStar}
		The Galerkin method is convergent, i.e., the approximations $(\varphi^*_h)_{h > 0}$ of $\varphi^*$ satisfy
		\begin{equation*}
			\lim_{h \to 0} \varphi^*_h = \varphi^* \quad \textup{ in } \widetilde{H}^{1/2}([\Gamma])\,.
		\end{equation*}
	\end{theorem}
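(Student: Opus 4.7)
The plan is to derive convergence from the quasi-optimality estimate \eqref{eq:qo} by establishing the density of the discrete jump spaces in $\widetilde{H}^{1/2}([\Gamma])$. Since $\varphi^*$ is fixed, it is enough to prove
\[
\inf_{\psi_h \in \widetilde{V}_h([\Gamma])} \norm{\varphi^* - \psi_h}_{\widetilde{H}^{1/2}([\Gamma])} \xrightarrow[h\to 0]{} 0\,,
\]
and by a standard $\varepsilon/3$ argument this reduces to showing that $\bigcup_{h>0} \widetilde{V}_h([\Gamma])$ is dense in $\widetilde{H}^{1/2}([\Gamma])$.

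To establish density, I would lift the problem to the volume. Given $\varphi \in \widetilde{H}^{1/2}([\Gamma])$, first pick a representative $\dot u \in \mathbb{H}^{1/2}(\Gamma)$ with $[\dot u]_\Gamma = \varphi$, then pick $u \in H^1(\R^3 \setminus \Gamma)$ with $\gamma u = \dot u$. Because $\Gamma$ is compact and contained in the interior of $\Omega$, multiplication by a fixed smooth cut-off $\chi \in C^\infty_c(\Omega)$ with $\chi \equiv 1$ near $\Gamma$ does not alter $\gamma u$ (the difference lies in $H^1_{0,\Gamma}(\R^3)$), so we may assume $u$ is compactly supported in $\Omega$ and in particular $u \equiv 0$ on $\Omega_0$. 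The goal then becomes to approximate such a $u$ by a sequence $u_h \in V_h(\Omega \setminus \Gamma)$ in the $H^1(\R^3 \setminus \Gamma)$-norm; continuity of $\gamma$ and $[\cdot]_\Gamma$ together with the definition of quotient norms would then yield
\[
\norm{\varphi - [u_h]_\Gamma}_{\widetilde{H}^{1/2}([\Gamma])} \leq \norm{u - u_h}_{H^1(\R^3 \setminus \Gamma)} \to 0\,.
\]

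The key step, and the main obstacle, is constructing the approximation $u_h \in V_h(\Omega \setminus \Gamma)$. The usual Scott--Zhang or Clément quasi-interpolation on the tetrahedral mesh $\mathcal{M}_{\Omega,h}$ cannot be applied directly, because $u$ may jump across $\Gamma$ and nodal values at vertices lying on $\Gamma$ are ambiguous. I would handle this by working with the inflated mesh structure of \Cref{defMstar}: for each tetrahedron $K \in \mathcal{M}_{\Omega,h}$, the restriction $u_{|K}$ extends (after the cut-off) to an $H^1$-function on a neighborhood, and one can define nodal averages using only simplices on the same side of $\Gamma$ as the node under consideration. This yields a Scott--Zhang-type operator $\Pi_h$ mapping $H^1(\R^3 \setminus \Gamma) \cap \{u = 0 \text{ on } \Omega_0\}$ into $V_h(\Omega \setminus \Gamma)$ with the usual local $H^1$-stability and first-order approximation properties on each $\Omega_j$.

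Standard Bramble--Hilbert-type arguments applied piece by piece on the Lipschitz domains $\Omega_j$ (as in \cite[Chap.\ 5]{toselli2004domain}) then give $\norm{u - \Pi_h u}_{H^1(\Omega_j)} \to 0$ as $h \to 0$ for each $j$. Summing over $j$ and using that $\Pi_h u \equiv 0$ on $\Omega_0$ yields the required approximation in $H^1(\R^3 \setminus \Gamma)$, completing the proof. The delicate point throughout is ensuring that the approximation commutes properly with the jump structure imposed by $\Gamma$, which is precisely why the inflated mesh formalism is well-suited here.
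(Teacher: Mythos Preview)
Your overall strategy is sound and leads to a valid proof, but it differs from the route taken in the paper. The paper does not build a quasi-interpolant acting on all of $H^1(\R^3\setminus\Gamma)$ for this step. Instead it first invokes the density of the smooth subspace $X^\infty$ (functions whose restriction to each $\Omega_j$ extends to $C^\infty(\overline{\Omega_j})$) in $H^1(\R^3\setminus\Gamma)$, and then applies the ordinary Lagrange interpolant piecewise on each $\Omega_j$ to such a smooth $u$. The only delicate point in the paper's argument is \Cref{pwInterp}, which checks that the piecewise Lagrange interpolant of a function in $X^0$ actually lies in $V_h(\Omega\setminus\Gamma)$ (i.e.\ the one-sided nodal values match wherever two tetrahedra share a face not in $\Gamma$).

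Your approach trades the density theorem for a $\Gamma$-aware Scott--Zhang operator. That operator does exist---it is exactly the $\mathscr{Z}_h$ of \Cref{SZ}, constructed in \cite{averseng2022stable}---and once you have it, uniform $H^1$-boundedness together with convergence on a dense smooth class gives $\mathscr{Z}_h u\to u$ for every $u$. So the two proofs shift the work to different places: the paper proves density of $X^\infty$ (Appendix~\ref{sec:proofDensity}) and then uses the cheapest possible interpolant, whereas you use a heavier interpolant and bypass the explicit density argument. Note, though, that your description of the quasi-interpolant via the inflated mesh is only a sketch; making it precise (choosing the averaging sets on the correct ``side'' of $\Gamma$ and proving the local stability/approximation bounds) is essentially the content of \cite{averseng2022stable}, so you are implicitly relying on the same circle of ideas.
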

	The proof is given in \Cref{sec:proofGalerkConv}. In turn, let $\mathcal{U}_h \isdef \DL \varphi_h^*$. From the convergence of $\varphi^*_h$ and the mapping properties of $\DL$, we deduce immediately the convergence of $\mathcal{U}_h$ in an appropriate sense given below. 
	
	\begin{corollary}
		\label{corConvHh}
		For every compact set $K$ of $\R^3$, there holds
		\[\lim_{h \to 0}\norm{\mathcal{U}_h - \mathcal{U}}_{H^1(K \cap (\R^3 \setminus \Gamma))} = 0\,.\]
	\end{corollary}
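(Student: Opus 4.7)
The plan is to reduce the claim directly to \Cref{thmConvPhiStar} via the continuity of the double-layer potential. By linearity of $\DL$ (which extends to $\widetilde{H}^{1/2}([\Gamma])$ thanks to property \eqref{kerDL}, as noted after \Cref{def:DL}), one has $\mathcal{U}_h - \mathcal{U} = \DL(\varphi_h^* - \varphi^*)$. Everything then follows if we can bound the $H^1$-norm of $\DL$ of a jump over a compact set by the $\widetilde{H}^{1/2}([\Gamma])$-norm of that jump.

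Step one: fix a compact set $K \subset \R^3$ and choose a smooth, compactly supported cutoff $\chi_K \in C^\infty_c(\R^3)$ that equals $1$ on a neighborhood of $K$. Step two: invoke the mapping property stated in \Cref{sec:weakSingRep} (just below \Cref{def:DL}), namely that $\DL$ is continuous from $\widetilde{H}^{1/2}([\Gamma])$ to $H^1_{\rm loc}(\Delta,\R^3\setminus \Gamma)$. The topology of the target Fréchet space is generated by the seminorms $\varphi \mapsto \norm{\chi \DL \varphi}_{H^1(\Delta,\R^3 \setminus \Gamma)}$ for $\chi \in C^\infty_c(\R^3)$, so there exists a constant $C_K > 0$ (depending on $K$ and on the chosen cutoff $\chi_K$, but not on $\varphi$) such that
\[
\norm{\chi_K\,\DL \varphi}_{H^1(\R^3 \setminus \Gamma)} \;\leq\; C_K\,\norm{\varphi}_{\widetilde{H}^{1/2}([\Gamma])} \qquad \forall\, \varphi \in \widetilde{H}^{1/2}([\Gamma]).
\]

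Step three: apply this estimate with $\varphi = \varphi_h^* - \varphi^*$. Since $\chi_K \equiv 1$ on a neighborhood of $K$, we get
\[
\norm{\mathcal{U}_h - \mathcal{U}}_{H^1(K \cap (\R^3 \setminus \Gamma))} \;\leq\; \norm{\chi_K(\mathcal{U}_h - \mathcal{U})}_{H^1(\R^3 \setminus \Gamma)} \;\leq\; C_K\, \norm{\varphi_h^* - \varphi^*}_{\widetilde{H}^{1/2}([\Gamma])}.
\]
Step four: conclude by letting $h \to 0$ and invoking \Cref{thmConvPhiStar}, which delivers $\norm{\varphi_h^* - \varphi^*}_{\widetilde{H}^{1/2}([\Gamma])} \to 0$.

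There is essentially no hard step here: the result is an immediate consequence of the Galerkin convergence in $\widetilde{H}^{1/2}([\Gamma])$ and the continuity of the double-layer potential already recorded in the preceding subsection. The only minor point to keep in mind is to work with a compactly supported cutoff rather than directly with the restriction to $K$, in order to honestly use the continuity into the Fréchet space $H^1_{\rm loc}(\Delta, \R^3 \setminus \Gamma)$; no new analytic idea is required beyond what is already established.
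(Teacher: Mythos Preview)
Your proof is correct and follows exactly the approach the paper indicates: the paper does not spell out a proof but simply states that the result follows ``immediately'' from the convergence of $\varphi_h^*$ (\Cref{thmConvPhiStar}) and the mapping properties of $\DL$, and your cutoff argument is precisely the standard way to make this rigorous. The only cosmetic slip is the cross-reference: the continuity of $\DL$ into $H^1_{\rm loc}(\Delta,\R^3\setminus\Gamma)$ is stated just after \Cref{def:DL}, not in \Cref{sec:weakSingRep}.
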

	In the rest of this section, we address the practical computation of $\mathcal{U}_h$. We rely on the construction of a basis $\{\widetilde{\phi}_\nu\}_{1 \leq \nu \leq \widetilde{N}_h}$ of the space $\widetilde{V}_h(\Gamma)$, in such a way that the quantities
	\begin{equation}
		\label{requiredAlgo}
		a(\widetilde{\phi}_\nu,\widetilde{\phi}_{\nu'})\,, \quad l_g(\widetilde{\phi}_\nu)\,, \quad \textup{and} \quad \DL \widetilde{\phi}_\nu
	\end{equation}
	can be evaluated algorithmically. Thus, we can build the linear system 
	\[\mathbf{W} \Phi = L\,,\]
	where $\mathbf{W}$ is the $\widetilde{N}_h \times \widetilde{N}_h$ square matrix with entries $\mathbf{W}_{i,j} = a(\widetilde{\phi}_{\nu},\widetilde{\phi}_{\nu'})$, and $L$ is the column vector with entries $L_\nu= l_g(\widetilde{\phi}_\nu)$. The approximation $\mathcal{U}_{h}$ is then given by 
	\[\mathcal{U}_h = \DL \Bigg(\sum_{i = 1}^{\widetilde{N}_h} \Phi_i \widetilde{\phi}_{\nu}\Bigg)\,.\]
	The definition of the local shape functions $\{\widetilde{\phi}_\nu\}_{1 \leq \nu \leq \widetilde{N}_h}$ is also required for discussing the domain-decomposition preconditioner in Section \ref{sec:dd}.
	
	\subsection{Local shape functions}
	\label{sec:localShapeFunc}
	
	The overall idea is to define $\widetilde{\phi}_\nu \isdef [\psi_{\nu}]_\Gamma$, where $\{\psi_{\nu}\}_{1 \leq \nu \leq \widetilde{N}_h}$ is a basis of a subspace
	$\Psi_h(\Omega\setminus \Gamma) \subset V_h(\Omega \setminus \Gamma)$ chosen such that the jump operator 
	induces a bijection
	\begin{equation}
		[\cdot]_\Gamma : \Psi_h(\Omega \setminus \Gamma) \to \widetilde{V}_h([\Gamma])\,.
	\end{equation} 
	To proceed, let us denote by $\{\vec{x}_1\,,\ldots\,,\vec{x}_N\}$ the ordered set of vertices of $\mathcal{M}_{\Omega,h}$, with the common vertices of $\mathcal{M}_{\Omega,h}$ and $\mathcal{M}_{\Gamma,h}$ given by $\vec{x}_1,\ldots,\vec{x}_M$, $M < N$. Let $V_h(\Omega)$ be the space of functions in $V_h(\Omega \setminus \Gamma)$ which are continuous across $\Gamma$; notice that
	\[V_h(\Omega) = H^1(\R^3) \cap V_h(\Omega \setminus \Gamma)\,.\]
	Let $\{\phi_i\}_{1 \leq i \leq N}$ be the nodal basis of $V_h(\Omega)$, that is, the set of elements of $V_h(\Omega)$ defined by
	\[\phi_i(\vec{x}_{i'}) = \delta_{i,i'}\,, \quad 1 \leq i,i' \leq N\,.\] 
	For each $i \in \{1,\ldots,N\}$, the {\em star} of $\vec{x}_i$, denoted by $\textup{st}(\vec{x}_i,\mathcal{M}_{\Omega,h})$, is the set of tetrahedra $K \in \mathcal{M}_{\Omega,h}$ containing $\vec{x}_i$ as a vertex. We define a graph $\mathcal{G}(\vec{x}_i)$ with 
	\begin{itemize}
		\item {\bf Nodes:} The elements of $\textup{st}(\vec{x}_i,\mathcal{M}_{\Omega,h})$
		\item {\bf Edges:} The pairs $\{K,K'\} \subset \textup{st}(\vec x_i,\mathcal{M}_\Omega)$ such that $K\cap K' \in \mathcal{F}(\mathcal{M}_{\Omega,h}) \setminus \mathcal{M}_{\Gamma,h}$. 
	\end{itemize}
	We denote by $\Delta_{i,1}, \ldots, \Delta_{i,q_i}$ the connected components of $\mathcal{G}(\vec{x}_i)$. Each $\Delta_{i,j}$ is thus a group of tetrahedra that can be linked by face-connected paths avoiding the faces in $\mathcal{M}_{\Gamma}$. The connected components of $\mathcal{G}(\vec x_i)$ model the different connected sectors locally near $\vec x_i$ in $\R^3 \setminus \Gamma$. Define the sets 
	\begin{equation}
		\label{defCurlyHOmega}
		\mathcal{H}(\Omega \setminus \Gamma) \isdef \enstq{(i,j) \in \N^2}{1 \leq i \leq N\,,\,\, 1 \leq j \leq q_i}\,, \quad \textup{and }
	\end{equation}
	\begin{equation}
		\label{defCurlyHGamma}
		\mathcal{H}(\Gamma) \isdef \enstq{(i,j) \in \mathcal{H}(\Omega \setminus \Gamma)}{i \leq M}\,,
	\end{equation}
	For all $(i,j) \in \mathcal{H}(\Omega \setminus \Gamma)$, we define $\vec x_{i,j}$ as the pair $(\vec x_i, \Delta_{i,j})$, i.e. a vertex $\vec x_i$ ``labeled" by one of the connected components of $\mathcal{G}({\vec x}_i)$. We call $\{\vec x_{i,j}\}_{1 \leq j \leq q_i}$ the set of \emph{generalized vertices} attached to $\vec x_i$ (see also \cite[Definition 2.11]{averseng2022fractured}). For $(i,j) \in \mathcal{H}(\Omega \setminus \Gamma)$, let 
	\[\abs{\Delta_{i,j}} \isdef \bigcup_{K \in \Delta_{i,j}} \overline{K}\]
	and denote by $\phi_{i,j}$ the {\em split basis function} of $V_h(\Omega \setminus \Gamma)$ associated to the generalized vertex $\vec x_{i,j}$, which is defined by
	\[\phi_{i,j}(\vec{x}) \isdef \begin{cases}\phi_i( \vec x) & \textup{for } \vec x \in \textup{int}(\abs{\Delta_{i,j}})\,, \\
		0 & \textup{otherwise}.	
	\end{cases}\]
	Split basis functions span $V_h(\Omega \setminus \Gamma)$, as seen with the following result.
	\begin{lemma}[{See \cite[Lemma 4.1]{averseng2022stable}}]
		\label{basisVh}
		The split basis functions $\{\phi_{i,j}\}_{(i,j) \in \mathcal{H}(\Omega\setminus\Gamma)}$ form a basis of $V_h(\Omega \setminus \Gamma)$.
	\end{lemma}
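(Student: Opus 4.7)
The plan is to prove this lemma by the standard ``spanning plus independence'' strategy, with the key point being a bijection between $V_h(\Omega\setminus\Gamma)$ and assignments of values to generalized vertices. I would structure the proof in three steps.

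First, I would verify that each $\phi_{i,j}$ does indeed lie in $V_h(\Omega \setminus \Gamma)$. Piecewise linearity on $\mathcal{M}_{\Omega,h}$ is immediate, and the support condition on $\Omega_0$ follows since the star of any vertex of $\mathcal{M}_{\Omega,h}$ contained in $\overline{\Omega}$ combined with the zero extension gives $\phi_{i,j} \equiv 0$ outside $\overline{\Omega}$ (one also invokes the boundary-vertex convention built into the index set, which is the point where one has to be careful about how $\mathcal{H}(\Omega\setminus\Gamma)$ handles vertices on $\partial\Omega$). The real content is the $H^1(\R^3\setminus\Gamma)$-regularity, which reduces to continuity across each face $F \in \mathcal{F}(\mathcal{M}_{\Omega,h}) \setminus \mathcal{M}_{\Gamma,h}$. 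By the very definition of $\mathcal{G}(\vec x_i)$, whenever such a face is shared by two tetrahedra $K, K' \in \textup{st}(\vec x_i, \mathcal{M}_{\Omega,h})$, these two tetrahedra lie in the \emph{same} connected component $\Delta_{i,j}$; hence $\phi_{i,j}$ agrees with $\phi_i$ on both sides of $F$ and is continuous there. If $F$ is shared between a tetrahedron of $\Delta_{i,j}$ and one outside $\textup{st}(\vec x_i,\mathcal{M}_{\Omega,h})$, then $\phi_i$ vanishes on $F$ and so does $\phi_{i,j}$ from both sides.

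Second, for spanning, I would take an arbitrary $u \in V_h(\Omega\setminus\Gamma)$ and construct explicit coefficients. For each generalized vertex $\vec x_{i,j}$, fix any $K \in \Delta_{i,j}$ and let $c_{i,j} \isdef (u|_K)(\vec x_i)$; this value is independent of the chosen $K \in \Delta_{i,j}$ because any two tetrahedra of $\Delta_{i,j}$ are connected by a face-path across non-$\Gamma$ faces along which $u$ is continuous, and linearity on each tetrahedron then forces the nodal values at $\vec x_i$ to coincide. Setting $w \isdef u - \sum_{(i,j)\in\mathcal{H}(\Omega\setminus\Gamma)} c_{i,j}\phi_{i,j}$, one checks on each tetrahedron $K\in\mathcal{M}_{\Omega,h}$ that $w|_K$ is linear and vanishes at all four vertices of $K$ (reading off that each vertex of $K$ belongs to a unique connected component, namely the one containing $K$), hence $w \equiv 0$.

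Third, for linear independence, I would consider a vanishing combination $\sum c_{i,j}\phi_{i,j} = 0$. For a given $(i_0,j_0)$, pick $K \in \Delta_{i_0,j_0}$ and evaluate the equation at the vertex $\vec x_{i_0}$ from within $K$: only the split basis functions $\phi_{i,j}$ with $\vec x_i = \vec x_{i_0}$ and $K \in \Delta_{i,j}$, i.e.\ only $\phi_{i_0,j_0}$, contribute a nonzero value (equal to $1$), giving $c_{i_0,j_0}=0$.

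The main obstacle is the first step, specifically the regularity check: one has to be confident that the connected-component construction in $\mathcal{G}(\vec x_i)$ exactly matches the places where a discrete $H^1(\R^3\setminus\Gamma)$-function is allowed to jump. Once that combinatorial-geometric fact is pinned down, the spanning and independence steps are straightforward bookkeeping using nodal values at generalized vertices.
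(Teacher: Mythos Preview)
Your proposal is correct. The paper does not supply its own proof of this lemma but defers to \cite[Lemma 4.1]{averseng2022stable}; nonetheless, the argument you outline is precisely the one that underlies the related computations the paper \emph{does} carry out in detail (see the proof of \Cref{lemdof}, where the identity $\lim_{n\to\infty}\phi_{i,j}(y_n)=\delta_{i,i_0}\delta_{j,j_0}$ for $y_n\to\vec x_{i_0}$ from within a tetrahedron of $\Delta_{i_0,j_0}$ is invoked, and the proof of \Cref{pwInterp}, where the well-definedness of nodal values along face-connected paths in $\Delta_{i,j}$ is established). Your three steps match this structure exactly: the regularity check in Step~1 is the combinatorial content of the definition of $\mathcal{G}(\vec x_i)$, and Steps~2 and~3 are the nodal-evaluation arguments. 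The caveat you flag about boundary vertices on $\partial\Omega$ is real but is an ambiguity in the paper's indexing convention rather than a gap in your reasoning.
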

	For $u_h \in V_h(\Omega \setminus \Gamma)$, we denote by $u_h(\vec x_{i,j})$ the coefficient of $u_h$ on the split basis function $\phi_{i,j}$, so that
	\begin{equation}
		\label{eq:defCoeffUhXij}
		\forall u_h \in V_h(\Omega \setminus \Gamma)\,, \quad \exists u_n(\vec x_{i,j}) \in \R\,: \qquad u_h = \sum_{(i,j) \in \mathcal{H}(\Omega\setminus\Gamma)} u_h(\vec x_{i,j}) \phi_{i,j}\,.
	\end{equation}
	We introduce the following coefficient-wise scalar product
	\begin{equation*}
	\forall (u_h,v_h) \in {V}_h(\Omega \setminus \Gamma) \times {V}_h(\Omega \setminus \Gamma)\,, \quad [u_h,v_h]_{l^2} \isdef \sum_{(i,j) \in \mathcal{H}(\Omega\setminus\Gamma)} u_h(\vec x_{i,j}) v_h(\vec x_{i,j})\,,
	\end{equation*}
	The space  $\Psi_h(\Omega \setminus \Gamma)$ is then defined as the $[\cdot,\cdot]_{l^2}$ orthogonal complement of $V_h(\Omega)$ in $V_h(\Omega \setminus \Gamma)$, so that
	\begin{equation}
		\label{eq:orthogonalityVhYh}
		V_h(\Omega \setminus \Gamma) = V_h(\Omega) \overset{\perp}{\oplus} \Psi_h(\Omega \setminus \Gamma)\,.
	\end{equation}
	Those definitions readily imply:
	\begin{corollary}[Parametrization of the jump space]
		The jump operator $[\cdot]_\Gamma$ induces a bijection
		\[[\cdot]_\Gamma: \Psi_h(\Omega \setminus \Gamma) \to \widetilde{V}_h([\Gamma])\,.\]
	\end{corollary}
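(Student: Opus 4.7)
The plan is to reduce the corollary to a statement about the kernel of the jump operator on the finite-dimensional space $V_h(\Omega \setminus \Gamma)$, and then exploit the direct-sum decomposition \eqref{eq:orthogonalityVhYh} for injectivity and surjectivity.

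First I would identify $\ker([\cdot]_\Gamma) \cap V_h(\Omega\setminus\Gamma)$ with $V_h(\Omega)$. The inclusion $V_h(\Omega) \subset \ker([\cdot]_\Gamma)$ is immediate from the identity $V_h(\Omega) = H^1(\R^3) \cap V_h(\Omega\setminus\Gamma)$, since every element of $H^1(\R^3)$ has trace in the single-trace space $H^{1/2}([\Gamma]) = \gamma(H^1(\R^3))$, which is precisely the kernel of $[\cdot]_\Gamma$ acting on $\mathbb{H}^{1/2}(\Gamma)$. For the reverse inclusion, if $u \in V_h(\Omega\setminus\Gamma)$ satisfies $[u]_\Gamma = 0$, then by definition of the jump space there exists $v \in H^1(\R^3)$ with $\gamma v = \gamma u$ in $\mathbb{H}^{1/2}(\Gamma)$, so that $v - u \in H^1_{0,\Gamma}(\R^3) \subset H^1(\R^3)$; adding back $v$ yields $u \in H^1(\R^3)$ and hence $u \in V_h(\Omega)$.

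With this kernel characterization in hand, surjectivity and injectivity both fall out of \eqref{eq:orthogonalityVhYh}. For surjectivity: an arbitrary $\varphi \in \widetilde{V}_h([\Gamma])$ is by definition of the form $[u]_\Gamma$ for some $u \in V_h(\Omega\setminus\Gamma)$; splitting $u = u_0 + u_\perp$ with $u_0 \in V_h(\Omega)$ and $u_\perp \in \Psi_h(\Omega\setminus\Gamma)$, the kernel identification gives $[u_\perp]_\Gamma = [u]_\Gamma = \varphi$. For injectivity: if $u_\perp \in \Psi_h(\Omega\setminus\Gamma)$ satisfies $[u_\perp]_\Gamma = 0$, then by the kernel characterization $u_\perp \in V_h(\Omega)$, whence $u_\perp \in V_h(\Omega) \cap \Psi_h(\Omega\setminus\Gamma) = \{0\}$ by the direct-sum decomposition.

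The main obstacle I anticipate is the rigorous verification that $[u]_\Gamma = 0$ forces $u \in H^1(\R^3)$ for a discrete piecewise-linear $u$. The argument above rests on the fact that $H^1_{0,\Gamma}(\R^3)$ is a subspace of $H^1(\R^3)$ (rather than of $H^1(\R^3\setminus\Gamma)$), which follows from its definition as a closure in $H^1(\R^3)$; all other steps are formal manipulations with quotient spaces once the kernel is pinned down.
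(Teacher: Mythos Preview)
Your proof is correct and is precisely the argument the paper has in mind when it says ``Those definitions readily imply'' the corollary: identify $\ker([\cdot]_\Gamma)\cap V_h(\Omega\setminus\Gamma)$ with $V_h(\Omega)$ and then read injectivity and surjectivity off the direct-sum decomposition \eqref{eq:orthogonalityVhYh}. Your verification that $[u]_\Gamma=0$ forces $u\in H^1(\R^3)$ via $H^1_{0,\Gamma}(\R^3)\subset H^1(\R^3)$ is exactly the right point and is handled correctly.
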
 
	Let
	\begin{equation}
		\label{defCurlyHtildeGamma}
		\widetilde{\mathcal{H}}([\Gamma])\isdef \enstq{(i,j) \in \mathcal{H}(\Omega\setminus\Gamma)}{q_i > 1 \textup{ and } j \leq q_i - 1}\,,
	\end{equation}
	and for $(i,j) \in \widetilde{\mathcal{H}}([\Gamma])$ define 
	\begin{equation}
		\label{defYij}
		\psi_{i,j} \isdef \phi_{i,j} - \phi_{i,q_i}\,.
	\end{equation}
	Using that $\{\phi_i\}_{1 \leq i \leq N}$ is a basis of $V_h(\Omega)$, together with the property
	\[\phi_{i} = \sum_{j = 1}^{q_i} \phi_{i,j}\,,\]
	and \Cref{basisVh}, a simple algebraic reasoning shows that $\{\psi_{i,j}\}_{(i,j) \in \widetilde{\mathcal{H}}([\Gamma])}$ is a basis of $\Psi_h(\Omega \setminus \Gamma)$. 
	Defining
	\begin{equation}
		\label{eq:defPhiTildeij}
		\forall (i,j) \in \widetilde{\mathcal{H}}([\Gamma])\,, \quad \widetilde{\phi}_{i,j} \isdef [\psi_{i,j}]_\Gamma
	\end{equation}
	we deduce the following result.
	\begin{corollary}[Basis of the jump space] 
		\label{lemparamJumps}
		The set $\{\widetilde{\phi}_{i,j}\}_{(i,j) \in \widetilde{\mathcal{H}}([\Gamma])}$ is a basis of $\widetilde{V}_h([\Gamma])$. 	
	\end{corollary}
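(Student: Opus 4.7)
The plan is to reduce the statement to two facts already contained in the excerpt: the preceding \emph{Parametrization of the jump space} corollary, which guarantees that $[\cdot]_\Gamma$ restricts to a linear bijection $\Psi_h(\Omega\setminus\Gamma) \to \widetilde{V}_h([\Gamma])$, and the assertion (made just before the target corollary) that $\{\psi_{i,j}\}_{(i,j)\in\widetilde{\mathcal{H}}([\Gamma])}$ is a basis of $\Psi_h(\Omega\setminus\Gamma)$. Since linear isomorphisms map bases to bases and $\widetilde{\phi}_{i,j} = [\psi_{i,j}]_\Gamma$ by definition \eqref{eq:defPhiTildeij}, these two ingredients immediately yield the conclusion. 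The only substantive work is thus to make explicit the ``simple algebraic reasoning'' that justifies the second fact, which I would carry out in three short steps.

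First, I would verify that each $\psi_{i,j}$ lies in $\Psi_h(\Omega\setminus\Gamma)$. In view of the splitting \eqref{eq:orthogonalityVhYh}, this amounts to checking $[\psi_{i,j}, v_h]_{l^2} = 0$ for every $v_h \in V_h(\Omega)$. Writing $v_h = \sum_{i=1}^N a_i \phi_i = \sum_{(i,j)\in\mathcal{H}(\Omega\setminus\Gamma)} a_i \phi_{i,j}$ (using $\phi_i = \sum_{j} \phi_{i,j}$), the coefficient of $v_h$ at every generalized vertex above $\vec x_i$ equals the common value $a_i$. Hence $[\psi_{i,j}, v_h]_{l^2} = v_h(\vec x_{i,j}) - v_h(\vec x_{i,q_i}) = a_i - a_i = 0$, as required.

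Second, I would check linear independence: expanding a vanishing combination $\sum c_{i,j} \psi_{i,j} = 0$ in the basis $\{\phi_{i,j}\}$ of $V_h(\Omega\setminus\Gamma)$ supplied by \Cref{basisVh}, the coefficient of $\phi_{i,j}$ for $(i,j) \in \widetilde{\mathcal{H}}([\Gamma])$ equals $c_{i,j}$, which forces every $c_{i,j}$ to vanish. A dimension count then closes the argument: by \eqref{eq:orthogonalityVhYh} and \Cref{basisVh}, $\dim \Psi_h(\Omega\setminus\Gamma) = \sum_{i=1}^N q_i - N = \sum_{i=1}^N (q_i-1) = |\widetilde{\mathcal{H}}([\Gamma])|$, where the last equality uses that indices $i$ with $q_i = 1$ contribute nothing to either side. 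I do not anticipate any genuine obstacle; the only point requiring minor care is the bookkeeping of $\widetilde{\mathcal{H}}([\Gamma])$ and the role of the anchor index $q_i$ in the definition $\psi_{i,j} = \phi_{i,j} - \phi_{i,q_i}$, which ensures that the decomposition \eqref{eq:orthogonalityVhYh} is used consistently.
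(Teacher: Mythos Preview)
Your proposal is correct and follows the same route as the paper: the paper states the corollary as an immediate consequence of the bijection $[\cdot]_\Gamma:\Psi_h(\Omega\setminus\Gamma)\to\widetilde V_h([\Gamma])$ together with the fact that $\{\psi_{i,j}\}$ is a basis of $\Psi_h(\Omega\setminus\Gamma)$, leaving the latter as ``simple algebraic reasoning'' based on \Cref{basisVh} and $\phi_i=\sum_j\phi_{i,j}$. You simply make that reasoning explicit via the orthogonality check, linear independence, and dimension count, which is exactly the intended argument.
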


	\subsection{Algorithm for the computation of the quantities in \eqref{requiredAlgo}}
	
	We first remark that computing  $a(\widetilde{\phi}_{i,j},\widetilde{\phi}_{i',j'})$, for $(i,j)$ and $(i',j')$ in $\widetilde{H}([\Gamma])$, only requires the evaluation of a linear combination of the quantities $a([\phi_{k,\ell}]_\Gamma,[\phi_{k',\ell'}]_\Gamma)$, for $(k,\ell),(k',\ell') \in \mathcal{H}(\Gamma)$. Those quantities are given by \Cref{thmWweak} in terms of the traces of $\phi_{k,\ell}$ and $\phi_{k',\ell'}$ on the boundaries $\partial \Omega_j$, but since those functions are defined in terms of connected components of the vertex stars of $\mathcal{M}_{\Omega,h}$, it is not a priori obvious how to perform those computations without relying on the external, tetrahedral mesh. To avoid this, the key idea is to introduce, for each $(k,\ell) \in \mathcal{H}(\Gamma)$, the set
	\begin{equation}
		\label{defAlphaij}
		\alpha_{k,\ell} \isdef \enstq{\vec {t} = (T,K) \in \mathcal{M}^*_{\Gamma,h}}{K \in \Delta_{k,\ell}}
	\end{equation}
	with $\mathcal{M}^*_{\Gamma,h}$ as in Definition \ref{defMstar}. Upon viewing the pairs $(T,K)$ as oriented triangles, the sets $\alpha_{k,\ell}$ can be computed without the need for the external mesh, using the so-called {\em intrinsic inflation algorithm} \cite[Def. 4.1]{averseng2022fractured}. Once $\mathcal{M}^*_{\Gamma,h}$ is endowed with a ``generalized mesh" structure, and if we assume that $\Gamma$ has no ``point contacts", those sets are immediately obtained from the {\em generalized vertices} of $\mathcal{M}^*_{\Gamma}$, computed via \cite[Algorithms 1 and 2]{averseng2022fractured}.\footnote{The (surface) generalized vertices of $\mathcal{M}^*_{\Gamma,h}$ should not be confused with the (volume) generalized vertices of the ``fractured mesh" $\mathcal{M}^*_{\Omega \setminus \Gamma,h}$, of which $\mathcal{M}^*_{\Gamma,h}$ is the boundary (see \cite[Section 4]{averseng2022fractured}). The generalized vertices $\vec x_{i,j}$ defined here  in \Cref{sec:localShapeFunc}, correspond to the (volume) generalized vertices of $\mathcal{M}^*_{\Omega \setminus \Gamma,h}$, while the $\alpha_{i,j}$ correspond to the (surface) generalized vertices of $\mathcal{M}^*_{\Gamma,h}$. Only the latter can be computed from the mesh $\mathcal{M}_{\Gamma,h}$ alone. For the definition of point contacts, see \cite[Section 5.2]{averseng2022fractured}. In the presence of point contacts, additional continuity conditions must be enforced.} 
	
	One has, by \Cref{corW}:
	\begin{equation}
		\label{eq:eval_a_phiij}
		a([\phi_{k,\ell}]_\Gamma,[\phi_{k',\ell'}]_\Gamma) = \sum_{\vec t,\vec t' \in \mathcal{M}^*_{\Gamma,h}} \iint_{\abs{\vec{t}}\times \abs{\vec t'}} \frac{\curl_{\vec t} \varphi_{k,\ell}^{\vec t}(x)\cdot\curl_{\vec t'} \varphi_{k',\ell'}^{\vec t'}(x') }{4\pi \norm{x - x'}} d\sigma_{\vec t}(x) d\sigma_{\vec t'}(x')\,,
	\end{equation}
	where $\{\varphi_{k,\ell}^{\vec t}\}_{\vec t \in \mathcal{M}^*_{\Gamma,h}}$ is defined by
	\begin{equation}
		\label{eq:defvarphikij}
		\forall \vec t \in \mathcal{M}^*_{\Gamma,h}\,, \,\,\forall x \in \abs{\vec t}\,, \quad \varphi_{k,\ell}^{\vec t}(x)\isdef 
		\begin{cases}
			\phi_k(x) & \textup{if } \vec t \in \alpha_{k,\ell}\,,\\
			0& \textup{otherwise}\,,
		\end{cases}
	\end{equation} 
	for all $\vec t \in \mathcal{M}^*_{\Gamma}$ and $(k,\ell) \in \mathcal{H}(\Gamma)$, where $\gamma_{\vec t}$ is defined in the paragraph below \Cref{defMstar}. Using \eqref{eq:defvarphikij}, the right-hand side of \eqref{eq:eval_a_phiij} can be evaluated without resorting to the external mesh $\mathcal{M}_{\Omega,h}$. The computation of $l_g(\widetilde{\varphi}_{i,j})$ is performed similarly, using that 
	\[l_g([\phi_{k,\ell}]_\Gamma) = \dduality{\gamma(\phi_{k,\ell})}{\pi_n(\vec g)} = \sum_{\vec t \in \mathcal{M}^*_{\Gamma,h}} \int_{\abs{\vec t}} \vec g(x) \cdot \vec n_{\vec t}(x) \varphi_{k,\ell}^{\vec t}(x)d\sigma_{\vec t}(x)\] 
	by definition of $l_g$, of the duality product $\dduality{\cdot}{\cdot}$ and integration by parts on each domain $\Omega_j$. The same ideas apply straightforwardly to $\DL \widetilde{\phi}_{i,j}$.

	\section{Induced decompositions of quotient spaces}	
	\label{sec:quotientSplit}
	
	We now recall a standard condition number estimate from the theory of additive Schwarz preconditioners involving the concept of ``stable subspace decompositions", and, based on ideas from \cite{hiptmair2012stable}, we identify two abstract conditions under which an initially stable splitting remains stable after passing to the quotient. 

	\subsection{Abstract condition number estimate for subspace splitting}
	
	Let $(\mathbb{V},\norm{\cdot}_{\mathbb{V}})$ be a Hilbert space, and let $V_{h,1}\,,\ldots\,,V_{h,n} \subset \mathbb{V}$ be finite dimensional subspaces of $\mathbb{V}$ and
	\[V_h \isdef V_{h,1} + \ldots + V_{h,n}\,.\]
	Let $P_i: V_{h} \to V_{h}$ be the $(\cdot,\cdot)_{\mathbb{V}}$-orthogonal projection onto $V_{h,i}$ and $P_{\rm ad} := \sum_{i=1}^n P_i$. Introduce the norm
	\[|||u_h|||_{\mathbb{V}}^2 := \inf \enstq{\sum_{i = 1}^n \norm{u_{h,i} }^2_{\mathbb{V}}}{\sum_{i = 1}^n u_{h,i} = u_h\,, \,\,\, u_{h,i} \in V_{h,i}}\,.\]
	\begin{theorem}[{cf. \cite[Thm. 16]{oswald1994multilevel}}]
		\label{thmCond}
		Suppose that there exists $\lambda_h, \Lambda_h > 0$ such that 
		\[\forall u_h \in V_h\,, \quad \lambda_h \norm{u_h}^2_{\mathbb{V}} \leq |||u_h|||_{\mathbb{V}}^2 \leq  \Lambda_h \norm{u_h}_{\mathbb{V}}^2\,.\]
		Then the spectral condition number of $P_{\rm ad}$ satisfies $\kappa(P_{\rm ad}) \leq \dfrac{\Lambda_h}{\lambda_h}$.
	\end{theorem}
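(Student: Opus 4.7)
The plan is to reduce the theorem to the standard additive Schwarz identity characterizing $(P_{\rm ad}^{-1}\cdot,\cdot)_{\mathbb{V}}$ via the infimum defining $|||\cdot|||_{\mathbb{V}}$, and then to use this identity to translate the hypothesis directly into spectral bounds on $P_{\rm ad}$. First I would verify that $P_{\rm ad}$ is symmetric positive definite on $V_h$ with respect to $(\cdot,\cdot)_{\mathbb{V}}$: each $P_i$ is an $(\cdot,\cdot)_{\mathbb{V}}$-orthogonal projector, hence self-adjoint and nonnegative, so their sum is self-adjoint and nonnegative; positivity on $V_h$ follows because $(P_{\rm ad}u,u)_{\mathbb{V}} = \sum_i \|P_i u\|^2_{\mathbb{V}} = 0$ forces $P_i u = 0$ for all $i$, and then for any decomposition $u = \sum_i u_i$ with $u_i \in V_{h,i}$ we get $\|u\|^2_{\mathbb{V}} = \sum_i(u, u_i)_{\mathbb{V}} = \sum_i(P_i u, u_i)_{\mathbb{V}} = 0$.

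The central step is the \emph{additive Schwarz identity}
\[
(P_{\rm ad}^{-1} u, u)_{\mathbb{V}} \;=\; |||u|||^2_{\mathbb{V}} \quad \text{for all } u \in V_h.
\]
To prove ``$\le$'', write $u = P_{\rm ad} w$ and set $u_i \isdef P_i w \in V_{h,i}$; then $\sum_i u_i = P_{\rm ad} w = u$, and
\[
\sum_i \|u_i\|^2_{\mathbb{V}} = \sum_i (P_i w, w)_{\mathbb{V}} = (P_{\rm ad} w, w)_{\mathbb{V}} = (u, w)_{\mathbb{V}} = (u, P_{\rm ad}^{-1} u)_{\mathbb{V}},
\]
so this particular decomposition realises the right-hand value and thus majorises the infimum. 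For the opposite inequality, take any admissible decomposition $u = \sum_i u_i$ with $u_i \in V_{h,i}$ and compute, using $P_i u_i = u_i$,
\[
(P_{\rm ad}^{-1}u,u)_{\mathbb{V}} = \sum_i (P_{\rm ad}^{-1}u, u_i)_{\mathbb{V}} = \sum_i (P_i P_{\rm ad}^{-1} u, u_i)_{\mathbb{V}} \le \Bigl(\sum_i \|P_i P_{\rm ad}^{-1} u\|^2_{\mathbb{V}}\Bigr)^{\!1/2}\!\Bigl(\sum_i \|u_i\|^2_{\mathbb{V}}\Bigr)^{\!1/2},
\]
and the first factor equals $(P_{\rm ad}(P_{\rm ad}^{-1}u),P_{\rm ad}^{-1}u)_{\mathbb{V}}^{1/2} = (P_{\rm ad}^{-1}u,u)_{\mathbb{V}}^{1/2}$, which after cancellation yields $(P_{\rm ad}^{-1}u,u)_{\mathbb{V}} \le \sum_i\|u_i\|^2_{\mathbb{V}}$; taking the infimum over decompositions gives ``$\ge$''.

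Once the identity is in hand, the theorem follows in one line: the assumption $\lambda_h\|u\|^2_{\mathbb{V}} \le |||u|||^2_{\mathbb{V}} \le \Lambda_h\|u\|^2_{\mathbb{V}}$ rewrites as
\[
\lambda_h \|u\|^2_{\mathbb{V}} \;\le\; (P_{\rm ad}^{-1} u, u)_{\mathbb{V}} \;\le\; \Lambda_h \|u\|^2_{\mathbb{V}} \qquad \forall u \in V_h,
\]
so by the Rayleigh characterisation the spectrum of the SPD operator $P_{\rm ad}^{-1}$ lies in $[\lambda_h,\Lambda_h]$, whence $\operatorname{spec}(P_{\rm ad}) \subset [\Lambda_h^{-1},\lambda_h^{-1}]$ and $\kappa(P_{\rm ad}) \le \Lambda_h/\lambda_h$.

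The only genuine obstacle is the additive Schwarz identity itself; the rest is book-keeping. I expect the two-sided argument above (one inequality by producing the explicit minimiser $u_i = P_i P_{\rm ad}^{-1} u$, the other by a Cauchy–Schwarz estimate that cancels one power of $(P_{\rm ad}^{-1}u,u)_{\mathbb{V}}^{1/2}$) to be the cleanest route and close enough to \cite[Thm. 16]{oswald1994multilevel} to be cited rather than reproduced in full detail in the paper.
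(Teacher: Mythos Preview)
Your argument is correct and is essentially the standard proof of the additive Schwarz condition-number estimate via the identity $(P_{\rm ad}^{-1}u,u)_{\mathbb{V}} = |||u|||_{\mathbb{V}}^2$. Note, however, that the paper does not supply its own proof of this theorem: it is simply stated with a citation to \cite[Thm.~16]{oswald1994multilevel}, so there is no in-paper argument to compare against---your write-up is exactly the kind of proof that reference contains, and the paper's intent is that the reader consult it rather than reproduce the details.
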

	
	\subsection{Stability of induced quotient splitting}
	Suppose that $\mathbb{V}_0 \subset \mathbb{V}$ is a closed subspace and let $\mathbb{X}$ be the quotient space 
	$$\mathbb{X} = \mathbb{V} / \mathbb{V}_0\,.$$ 
	Let $T: \mathbb{V} \to \mathbb{X}$ be the canonical surjection associated to this quotient, and $\norm{\cdot}_\mathbb{X}$ the quotient norm
	\[\norm{x}_{\mathbb{X}} \isdef \inf_{v \in T^{-1}(x)} \norm{v}_{\mathbb{V}}\,.\] 
	Recall that this makes $\mathbb{X}$ a Hilbert space with the inner product 
	\[\inner{x}{x'}_{\mathbb{X}} := \inner{(\textup{Id}-P)v}{(\textup{Id}-P)v'}_{\mathbb{V}}\,,\]
	where $P: \mathbb{V} \to \mathbb{V}$ is the $\inner{\cdot}{\cdot}_{\mathbb{V}}$-orthogonal projection onto $\mathbb{V}_0$ and $v$ (resp. $v'$) is an arbitrary element of $T^{-1}(x)$ (resp. $T^{-1}(x')$). 
	With $X_h \isdef T(V_h)$, we write
	\[X_h = X_{h,1} + \ldots + X_{h,n}\,, \quad X_{h,i} \isdef T(V_{h,i})\,.\]
	\[\forall f_h \in X_h\,, \quad |||f_h|||_{\mathbb{X}}^2 := \inf \enstq{\sum_{i = 1}^n \norm{f_{h,i}}_{\mathbb{X}}^2}{\sum_{i = 1}^n f_{h,i} = f_h\,, \,\, f_{h,i} \in X_{h,i}}\,.\]
	Introduce the following assumptions\\
	
	\begin{itemize}
		\item[\textbf{(A)}] There exists an operator 
		\[\Pi_h: \mathbb{V} \to \mathbb{V}\,,\]
		which is a projection onto $V_h$ (i.e., satisfying $\Pi_h u_h = u_h$ for $u_h \in V_h$) and which {\em preserves} $\mathbb{V}_0$ (in the sense that if $v \in \mathbb{V}_0$, then $\Pi_h v \in \mathbb{V}_0$). Denote by $\norm{\Pi_h}_{\mathcal{L}(\mathbb{V})}$ its operator norm.\\[1em]
		\item[\textbf{(B)}] There exist constants $\kappa_1,\ldots,\kappa_n >0$ such that for all $u_{h,i} \in V_{h,i}$, 
		\begin{equation*}
			\min \enstq{\norm{u_{h,i} - u_{h,i,0}}_{\mathbb{V}}}{u_{i,h,0} \in \mathbb{V}_0 \cap V_{h,i}} \leq \kappa_i \min \enstq{\norm{u_{h,i} - u_{h,0}}_{\mathbb{V}}}{u_{h,0} \in \mathbb{V}_0 \cap V_{h}}\,.
		\end{equation*}
	\end{itemize}

	\begin{theorem}[Stability of the quotient splitting]
		\label{mainthm}
		Let $\alpha_h, A_h > 0$ be such that 
		\[\forall u_h \in V_h\,, \quad \alpha_h\norm{u_h}_{\mathbb{V}}^2 \leq |||u_h|||_{\mathbb{V}}^2 \leq A_h \norm{u_h}_{\mathbb{V}}^2\,,\]
		and assume \textup{\textbf{(A)}} and \textup{\textbf{(B)}}. Then,
		\begin{equation}
			\label{eq:estim_mainthm}
			\forall f_h \in X_h\,, \quad \beta_h\norm{f_h}_{\mathbb{X}}^2 \leq |||f_h|||_{\mathbb{X}}^2 \leq B_h \norm{f_h}_{\mathbb{X}}^2\\[1em]
		\end{equation}
		\[\textup{with } \,\, \beta_h \isdef \frac{\alpha_h}{(\max_{1 \leq i \leq n} \kappa_i)^2\norm{\Pi_h}^2_{\mathcal{L}(\mathbb{V})}}\,, \quad B_h \isdef  \norm{\Pi_h}_{\mathcal{L}(\mathbb{V})}^2 A_h\,.\]
	\end{theorem}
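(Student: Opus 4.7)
The plan is to prove the upper and lower bounds in \eqref{eq:estim_mainthm} separately, reducing each to the already-known stability of the splitting $V_h = V_{h,1} + \ldots + V_{h,n}$ in $\mathbb{V}$ by suitably lifting from $\mathbb{X}$ to $\mathbb{V}$ via the projection $\Pi_h$ supplied by \textbf{(A)}.

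For the upper bound $|||f_h|||_{\mathbb{X}}^2 \le B_h \norm{f_h}_\mathbb{X}^2$, I would first pick a minimum-norm representative $v\in\mathbb{V}$ of $f_h$, so that $T(v)=f_h$ and $\norm{v}_{\mathbb{V}} = \norm{f_h}_\mathbb{X}$ (the infimum is attained since $\mathbb{V}_0$ is closed). Set $u_h \isdef \Pi_h v$. The key verification is that $T(u_h)=f_h$: since $f_h\in X_h=T(V_h)$, there exists $w_h\in V_h$ with $T(w_h)=f_h$, hence $v-w_h\in \mathbb{V}_0$, and by \textbf{(A)} we have $\Pi_h(v-w_h)\in \mathbb{V}_0$; combined with $\Pi_h w_h=w_h$, this yields $\Pi_h v - w_h\in \mathbb{V}_0$, i.e.\ $T(u_h)=T(w_h)=f_h$. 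Also $\norm{u_h}_\mathbb{V}\le \norm{\Pi_h}_{\mathcal{L}(\mathbb{V})}\norm{f_h}_\mathbb{X}$. Applying the assumed upper stability in $V_h$ gives a decomposition $u_h=\sum_i u_{h,i}$ with $\sum_i \norm{u_{h,i}}_\mathbb{V}^2\le A_h \norm{u_h}_\mathbb{V}^2$, and then $f_{h,i}\isdef T(u_{h,i})\in X_{h,i}$ satisfies $\sum_i f_{h,i}=f_h$ and $\norm{f_{h,i}}_\mathbb{X}\le \norm{u_{h,i}}_\mathbb{V}$; summing yields the upper bound with constant $B_h=\norm{\Pi_h}_{\mathcal{L}(\mathbb{V})}^2 A_h$.

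The lower bound $\beta_h \norm{f_h}_\mathbb{X}^2\le |||f_h|||_\mathbb{X}^2$ is more delicate and uses \textbf{(A)} and \textbf{(B)} together. Given a decomposition $f_h=\sum_i f_{h,i}$ with $f_{h,i}\in X_{h,i}$, pick any lift $u_{h,i}\in V_{h,i}$ of $f_{h,i}$. For every $v_0\in \mathbb{V}_0$, \textbf{(A)} guarantees $\Pi_h v_0\in \mathbb{V}_0\cap V_h$ and $\norm{u_{h,i}-\Pi_h v_0}_\mathbb{V} = \norm{\Pi_h(u_{h,i}-v_0)}_\mathbb{V}\le \norm{\Pi_h}_{\mathcal{L}(\mathbb{V})}\norm{u_{h,i}-v_0}_\mathbb{V}$, so taking the infimum over $v_0\in\mathbb{V}_0$ yields $\inf_{u_{h,0}\in \mathbb{V}_0\cap V_h}\norm{u_{h,i}-u_{h,0}}_\mathbb{V}\le \norm{\Pi_h}_{\mathcal{L}(\mathbb{V})}\norm{f_{h,i}}_\mathbb{X}$. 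Combining with \textbf{(B)} produces $u_{h,i,0}\in \mathbb{V}_0\cap V_{h,i}$ with $\norm{u_{h,i}-u_{h,i,0}}_\mathbb{V}\le \kappa_i\norm{\Pi_h}_{\mathcal{L}(\mathbb{V})}\norm{f_{h,i}}_\mathbb{X}$. Set $\widetilde{u}_{h,i}\isdef u_{h,i}-u_{h,i,0}\in V_{h,i}$, which still lifts $f_{h,i}$ (subtraction by an element of $\mathbb{V}_0$) and has controlled norm. Let $\widetilde{u}_h\isdef \sum_i \widetilde{u}_{h,i}\in V_h$, so that $T(\widetilde{u}_h)=f_h$. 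The assumed lower stability in $V_h$ applied to this particular decomposition gives $\alpha_h \norm{\widetilde{u}_h}_\mathbb{V}^2\le \sum_i \norm{\widetilde{u}_{h,i}}_\mathbb{V}^2 \le (\max_i \kappa_i)^2 \norm{\Pi_h}_{\mathcal{L}(\mathbb{V})}^2 \sum_i \norm{f_{h,i}}_\mathbb{X}^2$, and since $\norm{f_h}_\mathbb{X}=\norm{T(\widetilde{u}_h)}_\mathbb{X}\le \norm{\widetilde{u}_h}_\mathbb{V}$, taking the infimum over decompositions delivers the lower bound with the stated $\beta_h$.

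The main subtlety is in the lower bound: one must reconcile the \emph{intrinsic} approximation quantity featured in \textbf{(B)}, namely the distance from $u_{h,i}\in V_{h,i}$ to $\mathbb{V}_0\cap V_h$, with the \emph{extrinsic} quotient norm $\norm{f_{h,i}}_\mathbb{X}$, defined as the distance from $u_{h,i}$ to all of $\mathbb{V}_0$. The projection $\Pi_h$ furnished by \textbf{(A)} is exactly the bridge between these two, and this mechanism accounts for the factors $\norm{\Pi_h}_{\mathcal{L}(\mathbb{V})}^2$ that decorate both $\beta_h$ and $B_h$ in the conclusion.
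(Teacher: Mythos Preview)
Your proof is correct and follows essentially the same strategy as the paper's. The paper organizes the argument by first showing that \textbf{(A)} yields a global discrete lifting $E_h=\Pi_h\circ\Phi:X_h\to V_h$ (its Lemma~\ref{lem:StabDisc} and Corollary~\ref{cor:Aprime}) and that \textbf{(A)}+\textbf{(B)} yield componentwise liftings $E_{h,i}:X_{h,i}\to V_{h,i}$ (its Lemma~\ref{lem:Bprime}), and then invokes \cite[Thm.~2.1]{hiptmair2012stable} to conclude; your $u_h=\Pi_h v$ is exactly $E_h f_h$, your $\widetilde u_{h,i}=u_{h,i}-u_{h,i,0}$ is exactly $E_{h,i}f_{h,i}$, and the remainder of your argument is a direct, self-contained unpacking of that cited theorem. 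The only difference is packaging: the paper defers the final step to a reference, whereas you carry it out explicitly.
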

	We use a variant of the previous result in which the condition {\bf (B)} is relaxed for one of the subspaces (i.e., no estimate is required for one of the constants $\kappa_i$):
	\begin{lemma}[Weakening condition {\bf (B)}]
		\label{lem:weakening}
		With the same assumptions as in \Cref{mainthm}, the estimate \eqref{eq:estim_mainthm} also holds with 
		\[\beta_h = \frac{\alpha_h}{2 \max\{\norm{\Pi_h}_{\mathcal{L}(\mathbb{V})},\kappa_2,\ldots,\kappa_n\}^2 \norm{\Pi_h}^2_{\mathcal{L}(\mathbb{V})}}\,.\]
	\end{lemma}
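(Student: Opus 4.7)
The plan is to mimic the proof of Theorem \ref{mainthm}, tracking precisely where condition \textbf{(B)} is used for each subspace and devising a workaround only for $i=1$. The upper bound $|||f_h|||_\mathbb{X}^2\leq B_h\norm{f_h}_\mathbb{X}^2$ does not use \textbf{(B)} at all (only the stable decomposition's upper constant $A_h$ and the $\Pi_h$-based lift from $\mathbb{X}$ to $V_h$), so it transfers unchanged. The entire modification occurs in the lower bound $|||f_h|||_\mathbb{X}^2\geq\beta_h\norm{f_h}_\mathbb{X}^2$.

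For this lower bound, given any decomposition $f_h=\sum_{i=1}^n f_{h,i}$ with $f_{h,i}\in X_{h,i}$, the strategy is to construct a lift $\tilde u_h=\sum_{i=1}^n \tilde u_{h,i}^{\#}\in V_h$ with $\tilde u_{h,i}^{\#}\in V_{h,i}$ and $T\tilde u_h=f_h$, so that the lower stability constant $\alpha_h$ together with $\norm{f_h}_\mathbb{X}\leq\norm{\tilde u_h}_\mathbb{V}$ can be invoked. For $i\geq 2$, the construction is identical to the one used in the proof of \Cref{mainthm}: combining \textbf{(B)} with the $\Pi_h$-based estimate $\inf_{u_0\in\mathbb{V}_0\cap V_h}\norm{u_{h,i}-u_0}_\mathbb{V}\leq\norm{\Pi_h}_{\mathcal{L}(\mathbb{V})}\norm{f_{h,i}}_\mathbb{X}$ (valid because $\Pi_h$ is a bounded projection onto $V_h$ that preserves $\mathbb{V}_0$) yields $\tilde u_{h,i}\in V_{h,i}$ with $T\tilde u_{h,i}=f_{h,i}$ and $\norm{\tilde u_{h,i}}_\mathbb{V}\leq\kappa_i\norm{\Pi_h}_{\mathcal{L}(\mathbb{V})}\norm{f_{h,i}}_\mathbb{X}$.

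The new ingredient concerns $i=1$: since \textbf{(B)} is unavailable, $f_{h,1}$ cannot be lifted into $V_{h,1}$ with a controlled norm, but it can still be lifted into $V_h$ using only $\Pi_h$. Picking any $u_{h,1}\in V_{h,1}$ with $Tu_{h,1}=f_{h,1}$ and a near-optimal $u_0\in\mathbb{V}_0$, I set $\tilde u_{h,1}':=u_{h,1}-\Pi_hu_0\in V_h$, so that $T\tilde u_{h,1}'=f_{h,1}$ and $\norm{\tilde u_{h,1}'}_\mathbb{V}\leq\norm{\Pi_h}_{\mathcal{L}(\mathbb{V})}\norm{f_{h,1}}_\mathbb{X}$. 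Since $\tilde u_{h,1}'$ lies in $V_h$ but not generically in $V_{h,1}$, it is then redistributed by writing $\tilde u_{h,1}'=\sum_j w_j$ with $w_j\in V_{h,j}$ and absorbed into the earlier lifts via $\tilde u_{h,1}^{\#}:=w_1$ and $\tilde u_{h,j}^{\#}:=\tilde u_{h,j}+w_j$ for $j\geq 2$. The conclusion then follows from $\alpha_h\norm{\tilde u_h}_\mathbb{V}^2\leq\sum_i\norm{\tilde u_{h,i}^{\#}}_\mathbb{V}^2$ combined with the elementary inequality $(a+b)^2\leq 2(a^2+b^2)$, which is the source of the factor $2$ appearing in the stated $\beta_h$. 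The main technical obstacle is the redistribution step: one must exhibit a splitting $\tilde u_{h,1}'=\sum_j w_j$ whose total squared norm is controlled by $\norm{\Pi_h}_{\mathcal{L}(\mathbb{V})}^2\norm{\tilde u_{h,1}'}_\mathbb{V}^2$, so that $\norm{\Pi_h}_{\mathcal{L}(\mathbb{V})}$ plays the role of an effective ``$\kappa_1$'' replacing the missing instance of \textbf{(B)}; this is what ultimately justifies the simultaneous appearance of $\norm{\Pi_h}_{\mathcal{L}(\mathbb{V})}$ inside the $\max$ and of the outer factor $\norm{\Pi_h}^2_{\mathcal{L}(\mathbb{V})}$ in the announced $\beta_h$.
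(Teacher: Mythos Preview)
Your overall plan --- using the global lift $E_h=\Pi_h\circ\Phi$ for the distinguished index $i=1$ while keeping the local lifts $E_{h,i}:X_{h,i}\to V_{h,i}$ for $i\geq 2$ --- is exactly the right idea and coincides with the paper's. The gap is in the ``redistribution'' step that follows.

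You claim that $\tilde u_{h,1}'\in V_h$ can be split as $\tilde u_{h,1}'=\sum_j w_j$ with $w_j\in V_{h,j}$ and $\sum_j\norm{w_j}_{\mathbb V}^2\leq \norm{\Pi_h}_{\mathcal L(\mathbb V)}^2\,\norm{\tilde u_{h,1}'}_{\mathbb V}^2$. There is no reason this should hold: the sharp bound for the splitting cost of an arbitrary $v\in V_h$ is $|||v|||_{\mathbb V}^2\leq A_h\norm{v}_{\mathbb V}^2$, and nothing ties $A_h$ to $\norm{\Pi_h}^2$. If you feed $A_h$ through your argument you obtain
\[
\beta_h=\frac{\alpha_h}{2\,\max\!\big(\max_{i\geq 2}\kappa_i^2,\;A_h\big)\,\norm{\Pi_h}_{\mathcal L(\mathbb V)}^2},
\]
which is not the stated constant.

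The fix is to drop the redistribution altogether. With $\tilde u_h:=\sum_{i\geq 2}\tilde u_{h,i}+\tilde u_{h,1}'$ you already have $T\tilde u_h=f_h$, hence $\norm{f_h}_{\mathbb X}\leq\norm{\tilde u_h}_{\mathbb V}$, and then
\[
\norm{\tilde u_h}_{\mathbb V}^2\;\leq\;2\Big\lVert\sum_{i\geq 2}\tilde u_{h,i}\Big\rVert_{\mathbb V}^2+2\,\norm{\tilde u_{h,1}'}_{\mathbb V}^2.
\]
Only now do you invoke $\alpha_h$, on the first term alone: since the $\tilde u_{h,i}\in V_{h,i}$ for $i\geq 2$ form a valid decomposition of their own sum,
\[
\Big\lVert\sum_{i\geq 2}\tilde u_{h,i}\Big\rVert_{\mathbb V}^2\;\leq\;\frac{1}{\alpha_h}\sum_{i\geq 2}\norm{\tilde u_{h,i}}_{\mathbb V}^2.
\]
Combining this with $\norm{\tilde u_{h,i}}_{\mathbb V}\leq\kappa_i\norm{\Pi_h}\,\norm{f_{h,i}}_{\mathbb X}$ ($i\geq 2$) and $\norm{\tilde u_{h,1}'}_{\mathbb V}\leq\norm{\Pi_h}\,\norm{f_{h,1}}_{\mathbb X}$ yields the announced $\beta_h$ directly; this is precisely the paper's route (which phrases it as replacing the local extension $E_0$ by the global extension $E$ in the proof of \cite[Thm.~2.2]{hiptmair2012stable}). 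In short: your lifts are correct, but you should combine them by grouping, not by redistributing.
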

The proofs are given in \Cref{sec:proofHiptmairMao}.

\section{Stable splitting of boundary element spaces on multiscreens}
	\label{sec:dd}
	
We now construct the splitting used to define our additive Schwarz operator $P_{\rm ad}(H;h)$ introduced in Section \ref{sec:main}. We start by defining spaces of so-called {\em discrete-harmonics} (see e.g. \cite[Section 4.4]{toselli2004domain}). We then define a splitting of $V_h(\Omega \setminus \Gamma)$, and deduce a splitting of $\widetilde{V}_h([\Gamma])$ by application of the jump operator, for which we estimate the stability constants to prove \Cref{thm:main}.
	
\subsection{Coarse mesh}

From now on, we assume that the sets $\Omega_j$ defined in \Cref{sec:polyMS} are (coarse) tetrahedra, providing a quasi-uniform and shape-regular coarse triangulation of $\Omega$ (and in turn, of $\Gamma$), with diameters bounded by a coarse mesh parameter $H$, such that $0 < h < H < C$ where $C > 0$ is a constant depending only on $\Gamma$. In the following analysis, we also assume that for each $h > 0$, the mesh $\mathcal{M}_{\Omega,h}$ is uniformly shape-regular and quasi-uniform, with elements of diameter uniformly comparable to $h$.

\subsection{Discrete harmonic functions in the volume}

For each $j$, let us introduce  the subspace of discrete functions that are localised
in the subdomain $\Omega_j$ and vanishes beyond the boundary of $\Omega_j$, which we denote
\begin{equation*}
	V_{h,0}(\Omega_j)\isdef\enstq{u \in V_h(\Omega\setminus\Gamma)}{ u = 0 \,\, \textup{on}\,\, \Omega\setminus\Omega_j}.
\end{equation*}
For $j=1,\dots, J$, this forms a collection of subspaces of $V_h(\Omega\setminus\Gamma)$ that are pairwise orthogonal
in the $H^1(\Omega \setminus \Gamma)$-scalar product. Then we can define $\mathbf{V}_h(\Omega \setminus \Gamma)$
as the orthogonal complement to $V_{h,0}(\Omega_1)\oplus\dots\oplus V_{h,0}(\Omega_J)$ with respect to this scalar product.
As a consequence we have
\begin{equation}\label{OrthogonalDecomposition}
	V_h(\Omega\setminus\Gamma) = \mathbf{V}_h(\Omega \setminus \Gamma)\oplus V_{h,0}(\Omega_1)\oplus\dots\oplus V_{h,0}(\Omega_J)
\end{equation}
and this sum is $H^1(\Omega \setminus \Gamma)$-orthogonal by
construction. In words, $\mathbf{V}_h(\Omega \setminus \Gamma)$ is the
set of elements of $V_h(\Omega \setminus \Gamma)$ which are discrete
harmonics in each $\Omega_j$. The space $\mathbf{V}_h(\Omega
\setminus \Gamma)$ is {\bf not} the $H^1(\Omega \setminus
\Gamma)$-orthogonal complement of $V_h(\Omega \setminus \Gamma) \cap
H^1_{0,\Gamma}(\Omega)$ in $V_h(\Omega \setminus \Gamma)$, i.e. it
is not a set of global discrete harmonics. The motivation for
choosing piecewise harmonics instead of global harmonics is to make
it easier to define a decomposition satisfying an explicit
strengthened Cauchy-Schwarz inequalities in the additive Schwarz
framework. 

%

\begin{definition}[Basis of the discrete harmonic space]
	\label{def:omegaij}
	Let 
	\begin{equation}
		\label{defcurlyHsigma}
		\mathcal{H}(\Sigma) \isdef \enstq{(i,j) \in \mathcal{H}(\Omega \setminus \Gamma)}{\vec x_i \in \Sigma}\,,
	\end{equation}
	where $\mathcal{H}(\Omega \setminus \Gamma)$ is defined in eq.~\eqref{defCurlyHOmega} and $\Sigma = \partial \Omega_0\cap \ldots \cap \partial \Omega_J$ is the skeleton of the Lipschitz partition.
	For $(i,j) \in \mathcal{H}(\Sigma)$ and $k \in \{1\,\ldots,J\}$, we define $\varepsilon_{i,j}^k \in V_{h,0}(\Omega_k)$ by the variational problem 
	\[\int_{\Omega_k}\varepsilon_{i,j}^k(x) v_h(x) + \nabla \varepsilon_{i,j}^k(x) \cdot \nabla v_h(x) dx= \int_{\Omega_k} \phi_{i,j} (x)v_h(x) + \nabla \phi_{i,j}(x) \cdot \nabla v_h(x)dx\,,\]
	for all $v_h \in V_{h,0}(\Omega_k)$. Let 
	\begin{equation}
		\label{eq:defomegaij}
		\omega_{i,j} \isdef \phi_{i,j} - \sum_{k = 1}^J \varepsilon_{i,j}^k\,.
	\end{equation}
\end{definition}
This definition readily implies that $\omega_{i,j} \in \mathbf{V}_h(\Omega \setminus \Gamma)$ and satisfies $\gamma(\omega_{i,j}) = \gamma(\phi_{i,j})$.

\begin{lemma}
	\label{lemdof}
	The family $\{\omega_{i,j}\}_{(i,j) \in \mathcal{H}(\Sigma)}$ is a basis of $\mathbf{V}_h(\Omega \setminus \Gamma)$ and 
	\[\forall u_h \in \mathbf{V}_h(\Omega \setminus \Gamma)\,, \quad u_h = \sum_{(i,j) \in \mathcal{H}(\Sigma)} u_h(\vec x_{i,j}) \omega_{i,j}\,.\] 
\end{lemma}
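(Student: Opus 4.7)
The argument has three stages. The key observation, which makes everything work, is that $\omega_{i,j}$ is nothing but the $H^1(\Omega\setminus\Gamma)$-orthogonal projection of $\phi_{i,j}$ onto $\mathbf{V}_h(\Omega\setminus\Gamma)$. First I check that each $\omega_{i,j}$ belongs to $\mathbf{V}_h(\Omega\setminus\Gamma)$. Fix $(i,j)\in\mathcal{H}(\Sigma)$ and $k\in\{1,\dots,J\}$. Since $\varepsilon_{i,j}^{k'}\in V_{h,0}(\Omega_{k'})$ vanishes outside $\Omega_{k'}$, the restriction of $\omega_{i,j}$ to $\Omega_k$ is $\phi_{i,j}|_{\Omega_k}-\varepsilon_{i,j}^k$. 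For any $v_h\in V_{h,0}(\Omega_k)$ the global inner product $(\omega_{i,j},v_h)_{H^1(\Omega\setminus\Gamma)}$ localizes to $\int_{\Omega_k}(\phi_{i,j}-\varepsilon_{i,j}^k)v_h + \nabla(\phi_{i,j}-\varepsilon_{i,j}^k)\cdot\nabla v_h$, which vanishes by the variational problem defining $\varepsilon_{i,j}^k$. Hence $\omega_{i,j}$ is $H^1$-orthogonal to $V_{h,0}(\Omega_1)\oplus\cdots\oplus V_{h,0}(\Omega_J)$, placing it in $\mathbf{V}_h(\Omega\setminus\Gamma)$ by \eqref{OrthogonalDecomposition}.

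Next, for $u_h\in\mathbf{V}_h(\Omega\setminus\Gamma)$, \Cref{basisVh} gives $u_h=\sum_{(i,j)\in\mathcal{H}(\Omega\setminus\Gamma)} u_h(\vec x_{i,j})\phi_{i,j}$. I partition this sum according to whether $\vec x_i\in\Sigma$ or not. For $(i,j)\notin\mathcal{H}(\Sigma)$, the vertex $\vec x_i$ lies strictly inside some $\Omega_k$, so $\textup{st}(\vec x_i,\mathcal{M}_{\Omega,h})\subset\overline{\Omega_k}$, all other vertices of $\partial\Omega_k$ carry value $0$, and $\phi_{i,j}\in V_{h,0}(\Omega_k)$; the corresponding subsum therefore belongs to $\bigoplus_k V_{h,0}(\Omega_k)$. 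In the remaining sum I substitute $\phi_{i,j}=\omega_{i,j}+\sum_k\varepsilon_{i,j}^k$ and collect terms to obtain
\[u_h=\sum_{(i,j)\in\mathcal{H}(\Sigma)} u_h(\vec x_{i,j})\,\omega_{i,j}+w_h,\qquad w_h\in\bigoplus_{k=1}^J V_{h,0}(\Omega_k).\]
Since both $u_h$ and the $\omega_{i,j}$-sum lie in $\mathbf{V}_h(\Omega\setminus\Gamma)$ (the latter by the first stage), the residual $w_h$ lies in $\mathbf{V}_h(\Omega\setminus\Gamma)\cap\bigoplus_k V_{h,0}(\Omega_k)=\{0\}$ thanks to the direct sum \eqref{OrthogonalDecomposition}. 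This is the expansion formula.

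Spanning is immediate from the formula. For the basis property I finish with a dimension count: \eqref{OrthogonalDecomposition} combined with \Cref{basisVh} identifies $\dim\mathbf{V}_h(\Omega\setminus\Gamma)$ with the number of indices $(i,j)$ for which $\Delta_{i,j}$ is \emph{not} contained in any single $\overline{\Omega_k}$, and these are precisely the indices where $\phi_{i,j}\notin\bigoplus_k V_{h,0}(\Omega_k)$, equivalently where $\omega_{i,j}\neq 0$. All such indices lie in $\mathcal{H}(\Sigma)$, so the non-trivial $\omega_{i,j}$ are the correct number and span, hence are linearly independent. The main (mild) obstacle is precisely this bookkeeping: at a skeleton vertex $\vec x_i$ whose star component $\Delta_{i,j}$ is trapped on one side of $\Gamma$ (so $\Delta_{i,j}\subset\overline{\Omega_k}$), the corresponding $\phi_{i,j}$ itself belongs to $V_{h,0}(\Omega_k)$ and the projection $\omega_{i,j}$ is the zero vector, so the phrase ``basis'' is to be read modulo these trivial members. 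The substantive conclusion, namely the expansion formula that is used in the subsequent subspace splitting analysis, holds verbatim as stated.
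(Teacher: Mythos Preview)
Your argument for the expansion formula is essentially the paper's: both of you form $v_h=\sum_{(i,j)\in\mathcal H(\Sigma)}u_h(\vec x_{i,j})\omega_{i,j}$, observe that $u_h-v_h$ lies in $\bigoplus_k V_{h,0}(\Omega_k)$, and invoke the orthogonal decomposition \eqref{OrthogonalDecomposition}. Your Stage~2 is in fact a little more careful than the paper, which silently drops the contribution of the interior indices $(i,j)\notin\mathcal H(\Sigma)$ when writing $u_h-v_h$; you make explicit that those $\phi_{i,j}$ already sit in some $V_{h,0}(\Omega_k)$.

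Where you genuinely diverge is on linear independence. The paper argues directly: given $\sum\lambda_{i,j}\omega_{i,j}=0$, it evaluates along sequences $y_n\to\vec x_{i_0}$ from within a tetrahedron of $\Delta_{i_0,j_0}$, claiming that $\sum_k\varepsilon_{i,j}^k$ ``vanishes at every node of $\Sigma$'' so that $\lim\omega_{i,j}(y_n)=\lim\phi_{i,j}(y_n)=\delta_{i,i_0}\delta_{j,j_0}$. You instead attempt a dimension count and, in doing so, notice the obstruction: for a fine vertex $\vec x_i$ in the relative interior of a coarse face $\mathcal F^k\subset\Gamma$ separating $\Omega_\ell$ and $\Omega_m$, the component $\Delta_{i,1}$ on the $\Omega_\ell$ side satisfies $|\Delta_{i,1}|\subset\overline{\Omega_\ell}$, so $\phi_{i,1}\in V_{h,0}(\Omega_\ell)$ and consequently $\varepsilon_{i,1}^\ell=\phi_{i,1}$, giving $\omega_{i,1}=0$. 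This is a real phenomenon, and it shows that the paper's claim that $\varepsilon_{i,j}^k$ vanishes at all generalized nodes of $\Sigma$ is false precisely at such trapped indices; its pointwise argument therefore does not establish linear independence as written. Your reading---that the family is a spanning set with some redundant zero members, while the expansion formula holds verbatim---is the correct one, and it is only the expansion formula that the subsequent splitting analysis actually uses.
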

\begin{proof}
	Assume that there exist coefficients $\{\lambda_{i,j}\}_{(i,j) \in \mathcal{H}(\Sigma)}$ such that 
	\begin{equation}
		\label{linearComb}
		\sum_{(i,j) \in \mathcal{H}(\Sigma)} \lambda_{i,j} \omega_{i,j} = 0\,.
	\end{equation}
	Let $(i_0,j_0) \in \mathcal{H}(\Sigma)$ and, given $K \in \Delta_{i_0,j_0}$, consider a sequence $(y_n)_{n \in \N}$ of points in $\textup{int}(\abs{K})$ converging to $\vec x_{i_0}$. Note that since $\epsilon_{i,j}^k \in V_{h,0}(\Omega_k)$, $\omega_{i,j}$ differs from $\phi_{i,j}$ only by a continuous function which vanishes at every node of $\Sigma$. Therefore, one has 
	\[\lim_{n \to \infty} \omega_{i,j}(y_n) = \lim_{n\to \infty} \phi_{i,j}(y_n)\,.\]
	Furthermore, one can check (see e.g. the proof of \cite[Lemma 4.1]{averseng2022stable}) that 
	\[\lim_{n \to \infty} \phi_{i,j}(y_n) = \delta_{i,i_0} \delta_{j,j_0}\,.\]
	Combining this with eq.~\eqref{linearComb}, 
	it follows that $\lambda_{i_0,j_0} = 0$. Hence $\{\omega_{i,j}\}_{(i,j) \in \mathcal{H}(\Sigma)}$ is a free family. 
	
	On the other hand, let $u_h \in \mathbf{V}_h(\Omega \setminus \Gamma)$ and set
	$v_h = \sum_{(i,j) \in \mathcal{H}(\Sigma)} u_h(\vec x_{i,j}) \omega_{i,j}$.
	Each $\omega_{i,j}$ belongs to $\mathbf{V}_h(\Omega \setminus \Gamma)$ so we conclude that
	$u_h-v_h\in \mathbf{V}_h(\Omega \setminus \Gamma)$. Next, by definition of $\omega_{i,j}$, we have 
	\begin{equation*}
		u_h - v_h = \sum_{(i,j) \in \mathcal{H}(\Sigma)} u_h(\vec x_{i,j})\sum_{k = 1}^J \varepsilon_{i,j}^k\in
		V_{h,0}(\Omega_1)\oplus\dots\oplus V_{h,0}(\Omega_J).
	\end{equation*}
	Since $( V_{h,0}(\Omega_1)\oplus\dots\oplus V_{h,0}(\Omega_J) )\cap \mathbf{V}_h(\Omega \setminus \Gamma) = \{0\}$
	according to \eqref{OrthogonalDecomposition}, we deduce that $u_h-v_h = 0$, which concludes the proof. 
\end{proof}

The main property of discrete harmonics is that their $H^1$ norm can be estimated by a suitable norm of their boundary values, (this is classical, see also see e.g. \cite[Lemma 4.10]{toselli2004domain}). For a Lipschitz domain $\mathcal{D}$, let $\gamma_{\mathcal{D}}: H^1(\mathcal{D}) \to L^2(\partial \mathcal{D})$ be the pointwise trace operator, and let $H^{1/2}(\partial \mathcal{D}) \subset L^2(\partial \mathcal{D})$ be the image of $H^1(\mathcal{D})$ by the trace operator $\gamma_{\mathcal{D}}$. 
\begin{definition}[Quotient $H^{1/2}$ semi-norm]
	\label{defH12norms}
	We define the $H^{1/2}$ semi-norm as
	\[\abs{u}_{H^{1/2}(\partial \mathcal{D})} \isdef \min_{\gamma_{\mathcal{D}} U = u} \norm{\nabla u}_{L^2(\mathcal{D})}\,.\]
\end{definition}
Other equivalent norms are often used for this space, see e.g. \cite[Chap. 3]{McLean}, but this definition is convenient here because of scale-invariance results from \cite{pechstein2013shape}, which play a role in the proof of \Cref{lem:satB} below.

\subsection{Stable volume splitting}

We now define a subspace splitting of $V_h(\Omega \setminus \Gamma)$ which relies on a partition of the generalized vertices $\vec x_{i,j}$ with $(i,j) \in \mathcal{H}(\Sigma)$, according to the coarse mesh $\Omega_1,\ldots,\Omega_J$. After removing the interior degrees of freedom (i.e. the spaces $V_{h,0}(\Omega_j)$), it remains to decompose the space $\mathbf{V}_h(\Omega \setminus \Gamma)$. 
The goal is to construct a decomposition of the type of \cite[Algorithm 5.5]{toselli2004domain}, but the difference is that here, we need to account for the jumps of the functions in $\mathbf{V}_h(\Omega \setminus \Gamma)$ across $\Gamma$, due to the different ``sides"  of the faces, edges and vertices located on $\Gamma$.  

\begin{definition}[Subspace generated by an index subset of $\mathcal{H}(\Sigma)$]
	\label{def:SpaceCorresponding}
	Given a subset $\mathcal{H}_k \subset \mathcal{H}(\Sigma)$, the subspace $\mathbf{V}_h(\mathcal{H}_k) \subset \mathbf{V}_h(\Omega \setminus \Gamma)$ {\em generated by} $\mathcal{H}_k$ is defined by 
	\[\mathbf{V}_h(\mathcal{H}_k) \isdef \textup{Span}(\{\omega_{i,j}\}_{(i,j) \in \mathcal{H}_k})\,, \quad \omega_{i,j} \text{ defined as in \eqref{eq:defomegaij}}\,.\]
\end{definition}
We denote by $\mathcal{F}^k$ a face of the coarse triangulation (i.e., a triangular face of one of the tetrahedra $\Omega_k$), and first assume that $\mathcal{F}^k \notin \mathcal{M}_{\Gamma}$. In this case, define
$\mathcal{F}^k_h$ as the set of pairs $(i,j)$ such that $\vec x_i$ belongs to the relative interior of $\mathcal{F}^k$ (note that, since $\mathcal{F}^k$ is not in $\Gamma$, $\mathcal{F}^k_h$ only contains pairs of the form $(i,1)$) and let $\mathbf{V}_{\mathcal{F}^k} \isdef \mathbf{V}_h(\mathcal{F}^k_h)$. 

On the other hand, if the face $\mathcal{F}^k$ shared by the coarse tetrahedra $\Omega_{\ell}$ and $\Omega_m$ belongs to $\mathcal{M}_\Gamma$, then for each vertex $\vec x_i \in \textup{int}(\mathcal{F}^k)$, there are two generalized vertices $\vec x_{i,1}$ and $\vec x_{i,2}$ associated to $\vec x_i$, corresponding to either $\Omega_\ell$ or $\Omega_m$. We then define two spaces $\mathbf{V}_{\mathcal{F}^k,\nu} = \mathbf{V}_h(\mathcal{F}^k_{h,\nu})$, $\nu = 1,2$, where 
\[\mathcal{F}^k_{h,1} \isdef\enstq{(i,j) \in \mathcal{H}(\Omega \setminus \Gamma)}{\vec x_{i} \in \textup{int}(\mathcal{F}^k)\,,\,\, \Omega_\ell \cap \abs{\Delta_{i,j}} \neq \emptyset}\,,\]
\[\mathcal{F}^k_{h,2} \isdef\enstq{(i,j) \in \mathcal{H}(\Omega \setminus \Gamma)}{\vec x_{i} \in \textup{int}(\mathcal{F}^k)\,,\,\, \Omega_m \cap \abs{\Delta_{i,j}} \neq \emptyset}\,.\]

The set of remaining pairs $(i,j)$, i.e. those such that $\vec x_i$ belongs to the boundary of a coarse face $\mathcal{F}^k$, is denoted by $\mathcal{W}_h$, and we define the {\em wire-basket space} $\mathbf{V}_{\mathcal{W}} \isdef \mathbf{V}_h(\mathcal{W}_h)$. The proposed splitting of $V_h(\Omega \setminus \Gamma)$ is then 
\begin{equation}
	\label{eq:proposedSplit}
	V_{h}(\Omega \setminus \Gamma) = \sum_{j=1}^J V_{h,0}(\Omega_j) + \sum_{\mathcal{F}^k \cap \Gamma = \emptyset }\mathbf{V}_{\mathcal{F}^k} + \sum_{\mathcal{F}^k \subset \Gamma} \sum_{\nu = 1}^2 \mathbf{V}_{\mathcal{F}^k,\nu} + \mathbf{V}_{\mathcal{W}} + V_H(\Omega \setminus \Gamma)\,,
\end{equation}
where $V_H(\Omega \setminus \Gamma)$ is the ``coarse space" of the decomposition, which is the set of elements of $V_h(\Omega \setminus \Gamma)$ whose restriction to each $\Omega_j$ is affine. For notational convenience, we label the subsets of the splitting as ${V}_0,\ldots,{V}_L$ with ${V}_0 = {V}_H(\Omega \setminus \Gamma)$, ${V}_1 = \mathbf{V}_{\mathcal{W}}$, and ${V}_2,\ldots,{V}_L$ equal to the remaining spaces, in some arbitrary order. For $u_h \in V_h(\Omega \setminus \Gamma)$, define
\begin{equation}
	\label{eq:defNormSplitVol}
	|||u_h|||_{\rm split}^2 \isdef \inf \enstq{\sum_{i = 0}^L \norm{u_{h,i}}^2_{H^1(\R^3\setminus \Gamma)}}{\sum_{i = 0}^L u_{h,i} = u_h\,,\,\, u_{h,i} \in {V}_{i}}\,.
\end{equation}

\begin{theorem}
	\label{thm:splitVol1}
	The splitting in eq.~\eqref{eq:proposedSplit} is $(\alpha_h,A_h)$-stable with respect to the $H^1(\R^3 \setminus \Gamma)$ norm, in the sense that 
	\begin{equation}
		\label{eq:stableSplitVol}
		\forall u_h \in V_h(\Omega \setminus \Gamma)\,, \quad \alpha_h \norm{u_h}^2_{H^1(\R^3 \setminus \Gamma)} \leq |||u_h|||^2_{\rm split} \leq A_h \norm{u_h}^2_{H^1(\R^3 \setminus \Gamma)}\,,
	\end{equation} 
	where, 
	\[\alpha_h \geq c(1 + \log H/h)^{-2}\,, \quad A_h \leq C \,.\]
	with $c,C>0$ independent of $h$ and $H$. 
\end{theorem}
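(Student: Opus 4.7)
The plan is to adapt the classical Bramble--Pasciak--Schatz substructuring decomposition to the non-manifold volume setting, expressing everything in the generalized-vertex basis $\{\omega_{i,j}\}$ of \Cref{lemdof}. The starting move is to use the $H^1(\Omega\setminus\Gamma)$-orthogonal decomposition \eqref{OrthogonalDecomposition}: any $u_h \in V_h(\Omega\setminus\Gamma)$ splits uniquely as $u_h = u_h^0 + u_h^\star$ with $u_h^0 \in V_{h,0}(\Omega_1)\oplus\cdots\oplus V_{h,0}(\Omega_J)$ and $u_h^\star \in \mathbf{V}_h(\Omega\setminus\Gamma)$, and $\|u_h\|_{H^1(\R^3\setminus\Gamma)}^2 = \|u_h^0\|^2 + \|u_h^\star\|^2$. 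Since the subspaces $V_{h,0}(\Omega_j)$ already appear in \eqref{eq:proposedSplit}, their contribution to $|||u_h|||^2_{\rm split}$ is trivially bounded above and below by $\|u_h^0\|^2$, reducing the problem to the analysis of the splitting restricted to the discrete-harmonic space $\mathbf{V}_h(\Omega\setminus\Gamma)$.

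To produce the stable decomposition underlying the upper bound on $|||u_h|||^2_{\rm split}$, I expand $u_h^\star$ in the basis $\{\omega_{i,j}\}_{(i,j)\in\mathcal{H}(\Sigma)}$ and group the terms according to the classification of generalized vertices: coarse-mesh vertices feed $V_H(\Omega\setminus\Gamma)$, generalized vertices in the relative interior of a coarse face $\mathcal{F}^k$ feed $\mathbf{V}_{\mathcal{F}^k}$ (or $\mathbf{V}_{\mathcal{F}^k,\nu}$ when $\mathcal{F}^k\subset\Gamma$), and generalized vertices on the coarse wire-basket feed $\mathbf{V}_{\mathcal{W}}$. The $H^1$-norm of each piece is controlled, tetrahedron by tetrahedron, by the discrete harmonic extension estimate combined with the scale-invariant definition of the quotient $H^{1/2}$-seminorm (\Cref{defH12norms}) applied to the associated boundary trace. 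The wire-basket summand requires the discrete trace inequality for edges in $\R^3$ \cite[Lemma 4.16]{toselli2004domain}, which is precisely where the $(1+\log H/h)$ factor in the condition number enters.

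For the complementary bound on $|||u_h|||^2_{\rm split}$, the strategy is a strengthened Cauchy--Schwarz / coloring argument. Face spaces attached to distinct coarse faces have $\omega_{i,j}$ basis functions whose supports live in the at most two coarse tetrahedra sharing that face, so the collection $\{\mathbf{V}_{\mathcal{F}^k}\}\cup\{\mathbf{V}_{\mathcal{F}^k,\nu}\}$ can be colored with a bounded number of colors determined only by the shape-regularity of the coarse mesh. The mutual $H^1$-energetic interactions of face, wire-basket, and coarse contributions within each coarse tetrahedron are then absorbed by Cauchy--Schwarz, yielding $\|\sum u_{h,i}\|^2_{H^1(\R^3\setminus\Gamma)} \lesssim \sum \|u_{h,i}\|^2_{H^1(\R^3\setminus\Gamma)}$ with a constant that depends only on the coarse geometry.

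The main obstacle, and the essential departure from the classical BPS analysis, is the correct bookkeeping at coarse faces $\mathcal{F}^k \subset \Gamma$. There, a single coarse vertex $\vec x_i$ in $\textup{int}(\mathcal{F}^k)$ corresponds to two generalized vertices $\vec x_{i,1}, \vec x_{i,2}$, one per side of the screen, and consequently to the two face spaces $\mathbf{V}_{\mathcal{F}^k,1}, \mathbf{V}_{\mathcal{F}^k,2}$. The construction of the decomposition, the discrete harmonic extension estimates, and the coloring argument must all respect this sided partition so that no spurious coupling across $\Gamma$ is introduced in a neighborhood of the screen. A secondary technicality is ensuring $H$-independence of the constants: this relies on the scale invariance of the quotient $H^{1/2}$-seminorm and on the scaled trace/extension estimates in the spirit of \cite{pechstein2013shape}, applied subdomain by subdomain to the quasi-uniform coarse tetrahedra.
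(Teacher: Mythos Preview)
Your overall scaffold matches the paper's: reduce to the discrete-harmonic part via the orthogonal splitting \eqref{OrthogonalDecomposition}, then combine a strengthened Cauchy--Schwarz/coloring argument for one inequality with an explicit BPS-style decomposition for the other. The coloring step and the sided bookkeeping for faces in $\Gamma$ are also essentially what the paper does.

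The gap is in your construction of the stable decomposition. You propose to ``expand $u_h^\star$ in the basis $\{\omega_{i,j}\}$ and group the terms'' so that coarse-mesh vertices feed $V_H(\Omega\setminus\Gamma)$. But the functions $\omega_{i,j}$ attached to coarse vertices do \emph{not} span, or even lie in, $V_H(\Omega\setminus\Gamma)$: each $\omega_{i,j}$ is a fine-mesh discrete harmonic whose trace is a fine hat function on $\Sigma$, and coarse vertices belong to the wire-basket, so $\sum_{\vec x_i\ \text{coarse}} c_{i,j}\,\omega_{i,j} \in \mathbf{V}_{\mathcal{W}}$, not $V_H$. Your grouping therefore produces no genuine coarse-space component, and without one the decomposition cannot be stable with $H$-independent constants. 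Relatedly, your face components are formed from $u_h^\star$ itself rather than from the remainder after subtracting a coarse approximant; the face cutoff estimate \cite[Lemma~4.24]{toselli2004domain} requires control of $H^{-2}\norm{w_h}_{L^2(\Omega_j)}^2$, which is only available once a coarse part has been removed.

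The paper closes this gap by taking the coarse component to be a Cl\'ement-type quasi-interpolant $u_{h,0} = \tilde I_H u_h$ on the coarse mesh, with $H^1$-stability and the approximation $\norm{u_h - \tilde I_H u_h}_{L^2(\Omega_j)} \leq C H\,|u_h|_{H^1(\Omega_j)}$, and then defines the face components as $\mathcal{H}\big(\theta_{\mathcal{F}^k}(u_h - u_{h,0})\big)$ and the wire-basket component as the remainder. With that one correction your outline is sound: the discrete trace inequality for edges \cite[Lemma~4.16]{toselli2004domain} supplies the $(1+\log H/h)$ factor for the wire-basket term, and the finite-overlap coloring bounds the complementary constant.
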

The proof is given in \Cref{sec:proofMain}.

\subsection{Stable splitting of the jump space}

We now define a splitting of $\widetilde{V}_h([\Gamma])$ by applying the operator $[\cdot]_{\Gamma}$ on both sides of eq.~\eqref{eq:proposedSplit}. Note that $[V_{h,0}(\Omega_j)] =\{0\}$ for all $j = 1,\ldots,J$. Moreover,
\[[V_H(\Omega \setminus \Gamma)]_\Gamma = \widetilde{V}_H([\Gamma])\,,\]
where $\widetilde{V}_H([\Gamma])$ is defined just as $\widetilde{V}_h([\Gamma])$ but using the coarse mesh instead of the fine mesh. 

\begin{definition}[Proposed splitting of the jump space]
	\label{def:proposedJumpSplit}
	We define the additive Schwarz operator $P_{\rm ad}(H;h)$ according to the splitting
	\begin{equation}
		\label{eq:jumpSplit}
		\boxed{\widetilde{V}_h([\Gamma]) = \sum_{\mathcal{F}^k \subset \Gamma} \sum_{\nu = 1}^2 \widetilde{V}_{\mathcal{F}^k,\nu} +  \widetilde{V}_{\mathcal{W}} + \widetilde{V}_H([\Gamma])\,,}
		\end{equation}
where $\widetilde{V}_{\mathcal{F}^k,\nu} \isdef [\mathbf{V}_{\mathcal{F}^k,\nu}]_\Gamma$ and $\widetilde{V}_{\mathcal{W}} := [\mathbf{V}_{\mathcal{W}}]_\Gamma$.
\end{definition}
\begin{remark}
	We point out that in \eqref{eq:jumpSplit}, we have $\widetilde{{V}}_{\mathcal{F}^k,1} = \widetilde{V}_{\mathcal{F}^k,2}$. Therefore, one of the two copies can be removed from the decomposition, without worsening the final stability result. Indeed, one can see that if \Cref{thmCond} holds for the full decomposition, then it holds for the decomposition with just one copy of each face jump spaces with the same $\lambda_h$, and with $\Lambda'_h = 2\Lambda_h$. 	
\end{remark}

\subsection{Proof of \Cref{thm:main}}
 	We now complete the proof of \Cref{thm:main}. We bound the condition number $\kappa(P_{\rm ad})$ using \Cref{thmCond}, where the stability constants $\lambda_h,\Lambda_h$ are estimated using the concept of induced splittings discussed in \Cref{sec:quotientSplit}. The initial splitting is given by eq.~\eqref{eq:proposedSplit}, with stability constants given by \Cref{thm:splitVol1}, and the operator $T$ mapping this initial splitting to the one given in eq.~\eqref{eq:proposedSplit} is given by the jump operator $[\cdot]_\Gamma$. According to \Cref{thm:splitVol1}, it remains to check that the stability conditions of \Cref{lem:weakening} hold. In this context, they take the following form 
	\begin{itemize}
		\item[\textbf{(A)}] There exists an $h$-uniformly bounded projection $\Pi_h: H^1(\R^3 \setminus \Gamma) \to H^1(\R^3 \setminus \Gamma)$ onto  ${V}_h(\Omega \setminus \Gamma)$ preserving $H^1(\R^3)$. 
		\item[\textbf{(B)}] For all but one subspace ${V}_n$ in the decomposition \eqref{eq:proposedSplit}, there exists $\kappa_n >0$ independent of $h$ such that 
		\[\forall u \in {V}_n\,, \quad \min_{v_{0,n} \in {V}_n \cap H^1(\R^3)} \norm{u - v_{0,n}}_{H^1(\R^3 \setminus \Gamma)} \leq \kappa_n \min_{v_0 \in {V}_h(\Omega \setminus \Gamma) \cap H^1(\R^3)} \norm{u - v_0}_{H^1(\R^3 \setminus \Gamma)}\,.\]
	\end{itemize}
	By \Cref{thmQI} below, {\bf (A)} holds with $\norm{\Pi_h}_{\mathcal{L}(H^1(\R^3 \setminus \Gamma)} = O(1)$ (with respect to the parameters $h$ and $H$). On the other hand, by \Cref{lem:satB} below, {\bf (B)} holds with ${\kappa_n} = O(1)$ for each subspace $V_n$, with the exception of the wire-basket (this space being the exception permitted by \Cref{lem:weakening}).~$\square$

\paragraph{Proof of {\bf (A)}}

We construct the operator $\Pi_h$ by combining two quasi-interpolants. The first one is (up to minor modifications) given by the classical Scott-Zhang quasi-interpolant acting on functions in the volume. The second one is the analog of that operator but acting on multi-traces i.e. on the surface of the multi-screen rather than the volume. Note that the extra property that the operator $\Pi_h$ of \Cref{thmQI} has compared to the classical Scott-Zhang interpolant $\Psi_h$ of \Cref{SZ} is that it preserves a larger space (namely $H^1(\R^3)$, instead of just $H^1_{0,\Gamma}(\R^3)$).

\begin{proposition}[Scott-Zhang quasi-interpolant on $H^1(\R^3 \setminus \Gamma)$]
	\label{SZ}
	There exists a constant $C > 0$ such that, for each $h > 0$, there exists a projection $\mathscr{Z}_h: H^1(\R^3 \setminus \Gamma) \to H^1(\R^3 \setminus \Gamma)$ onto ${V}_h(\Omega \setminus \Gamma)$ which preserves $H^1_{0,\Gamma}(\R^3)$ and satisfies
	\[\norm{\mathscr{Z}_h u}_{H^1(\R^3 \setminus \Gamma)} \leq C \norm{u}_{H^1(\R^3 \setminus \Gamma)} \quad \forall u \in H^1(\R^3 \setminus \Gamma)\,.\]
\end{proposition}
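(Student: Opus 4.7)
The plan is to adapt the classical Scott-Zhang quasi-interpolant to the fractured-mesh framework of \Cref{sec:localShapeFunc}, using the generalized-vertex structure to handle the multi-valued nature of $V_h(\Omega\setminus\Gamma)$ across $\Gamma$. For each $(i,j)\in\mathcal{H}(\Omega\setminus\Gamma)$ I select an ``averaging simplex'' $\sigma_{i,j}$ of dimension $d_{i,j}\in\{2,3\}$ contained in $\overline{|\Delta_{i,j}|}$ and containing the vertex $\vec x_i$, following the rule: $\sigma_{i,j}$ is a triangle of $\mathcal{M}_{\Gamma,h}$ adjacent to a tetrahedron of $\Delta_{i,j}$ when $\vec x_i\in\Gamma$; a boundary triangle of $\partial\mathcal{M}_{\Omega,h}$ when $\vec x_i\in\partial\Omega$; and any tetrahedron of $\Delta_{i,j}$ incident to $\vec x_i$ otherwise. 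On each $\sigma_{i,j}$ I take the $L^2(\sigma_{i,j})$-biorthogonal dual $\lambda_{i,j}$ of the split-basis function $\phi_{i,j}$, with respect to the basis of $P_1(\sigma_{i,j})$ formed by the restrictions of the split basis functions attached to the generalized vertices of $\sigma_{i,j}$, and set
\[\mathscr{Z}_h u \isdef \sum_{(i,j)\in\mathcal{H}(\Omega\setminus\Gamma)}\Big(\int_{\sigma_{i,j}}\gamma_{i,j}(u)\,\lambda_{i,j}\,d\sigma\Big)\phi_{i,j}\,,\]
where $\gamma_{i,j}(u)$ denotes the one-sided trace of $u$ on $\sigma_{i,j}$ from the tetrahedron of $\Delta_{i,j}$ adjacent to $\sigma_{i,j}$, well defined since $u|_{\Omega_k}\in H^1(\Omega_k)$ for each Lipschitz subdomain $\Omega_k$.

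The algebraic properties then follow along familiar lines. For $u_h\in V_h(\Omega\setminus\Gamma)$, \Cref{basisVh} expands $u_h|_{\sigma_{i,j}}$ uniquely in the split basis of its generalized vertices with coefficients $u_h(\vec x_{k,\ell})$, and biorthogonality of $\lambda_{i,j}$ gives $\mathscr{Z}_h u_h=u_h$. To check preservation of $H^1_{0,\Gamma}(\R^3)$, I observe that any $u\in H^1_{0,\Gamma}(\R^3)$ has trace zero on each triangle of $\mathcal{M}_{\Gamma,h}$, so every DOF attached to a vertex on $\Gamma$ vanishes. The remaining DOFs involve integrals over tetrahedra or over triangles of $\partial\Omega$, where $u$ is single-valued; thus $\mathscr{Z}_h u$ is a continuous piecewise linear function on $\mathcal{M}_{\Omega,h}$ vanishing at every vertex on $\Gamma$, hence vanishing on all of $\Gamma$ by piecewise linearity, and therefore lying in $V_h(\Omega)\cap H^1_{0,\Gamma}(\R^3)$.

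The uniform $H^1$-stability is the core of the argument and will come from a Scott-Zhang-type local scaling estimate transplanted to the fractured patches. Shape regularity and quasi-uniformity give $\norm{\lambda_{i,j}}_{L^2(\sigma_{i,j})}\lesssim h^{-d_{i,j}/2}$ and $\norm{\phi_{i,j}}_{H^1(K)}\lesssim h^{1/2}$, while a trace inequality on the patch $|\Delta_{i,j}|$ produces
\[\norm{\gamma_{i,j}u}_{L^2(\sigma_{i,j})}^2 \lesssim h^{-1}\norm{u}_{L^2(|\Delta_{i,j}|)}^2 + h\,\norm{\nabla u}_{L^2(|\Delta_{i,j}|)}^2\,.\]
Combining these estimates locally on each mesh element $K\in\mathcal{M}_{\Omega,h}$ and summing over the patches, which cover $\R^3\setminus\Gamma$ with uniformly bounded overlap, yields the stated bound $\norm{\mathscr{Z}_h u}_{H^1(\R^3\setminus\Gamma)}\leq C\norm{u}_{H^1(\R^3\setminus\Gamma)}$.

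The main obstacle will be justifying the $h$-uniformity of the constant in the trace inequality on each $|\Delta_{i,j}|$. The patches at non-manifold junctions of $\Gamma$ can be topologically complex, but by construction each is a shape-regular union of tetrahedra linked by face-paths avoiding $\mathcal{M}_\Gamma$ and, in particular, it meets $\Gamma$ only ``from one side''. Together with the shape-regularity of $\mathcal{M}_{\Omega,h}$, this makes each $|\Delta_{i,j}|$ a Lipschitz polytope whose geometric constants are controlled independently of $h$, so that the classical local trace and Poincar\'e inequalities carry over with $h$-uniform constants.
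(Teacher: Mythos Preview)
Your construction is precisely the adaptation of Scott--Zhang that the paper has in mind; the paper itself gives no proof beyond citing \cite{scott1990finite} and asserting that ``the analysis extends with only minor adaptations''. Your choice of averaging simplices (triangles in $\mathcal{M}_{\Gamma,h}$ for generalized vertices on $\Gamma$, boundary triangles on $\partial\Omega$, tetrahedra elsewhere), the projection argument via biorthogonality, and the verification that $H^1_{0,\Gamma}(\R^3)$ is preserved are all correct and constitute exactly the ``minor adaptations'' needed.

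There is one gap in your stability sketch. Literally combining the three displayed scaling estimates on an element $K$ yields
\[
\norm{\nabla \mathscr{Z}_h u}_{L^2(K)} \lesssim h^{-1}\norm{u}_{L^2(\omega_K)} + \norm{\nabla u}_{L^2(\omega_K)}\,,
\]
and the $h^{-1}$ factor on the $L^2$ term does not vanish after summation; this would not give a uniform $H^1$ bound. The missing step is the standard Scott--Zhang device: since your $\mathscr{Z}_h$ reproduces affine functions locally on each patch (the biorthogonal functionals return nodal values of any $p\in P_1$), you can subtract a suitable $p$ and apply a Bramble--Hilbert or Poincar\'e inequality on the patch $\omega_K$ to convert $h^{-1}\norm{u-p}_{L^2(\omega_K)}$ into $\norm{\nabla u}_{L^2(\omega_K)}$. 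Near $\partial\Omega$ one uses Poincar\'e--Friedrichs instead, which is precisely why your choice of boundary triangles there is the right one. Your remark that the fractured patches $|\Delta_{i,j}|$ are uniformly Lipschitz under shape-regularity is then exactly what is needed to make these local Poincar\'e inequalities $h$-uniform, and with this step inserted the argument closes.
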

One can construct $\mathscr{Z}_h$ as in \cite{scott1990finite}. The analysis extends with only minor adaptations to deal with the more complex domain $\R^3 \setminus \Gamma$. Note that, by applying to $\mathscr{Z}_h$ the same reasoning as to $\Pi_h$ in the proof of \Cref{thmQI} below, one can show that the combination of the projection property and the stability of the space $H^1_{0,\Gamma}(\R^3)$ implies that $\mathscr{Z}_h$ preserves piecewise linear point traces, in other words, $\gamma \mathscr{Z}_h u = \gamma u$ if $\gamma u \in \mathbb{V}_h(\Gamma)$.
\begin{proposition}[{Jump aware quasi-interpolant on $\mathbb{H}^{1/2}(\Gamma)$}]
	There exists a constant $C > 0$ such that, for each $h > 0$, there exists a projection $\Phi_h: \mathbb{H}^{1/2}(\Gamma) \to \mathbb{H}^{1/2}(\Gamma)$ onto $\mathbb{V}_h(\Gamma)$ which preserves $H^{1/2}([\Gamma])$ and satisfies
	\[\norm{\Phi_h u}_{\mathbb{H}^{1/2}(\Gamma)} \leq C \norm{u}_{\mathbb{H}^{1/2}(\Gamma)} \quad \forall u \in \mathbb{H}^{1/2}(\Gamma)\,.\]
\end{proposition}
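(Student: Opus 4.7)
The plan is to define $\Phi_h$ as a lift--and--interpolate composition, reusing the Scott--Zhang operator $\mathscr{Z}_h$ from \Cref{SZ}. Given $u \in \mathbb{H}^{1/2}(\Gamma)$, I would pick any $U \in H^1(\R^3 \setminus \Gamma)$ with $\gamma U = u$ and set
\[\Phi_h u \isdef \gamma(\mathscr{Z}_h U).\]
Four properties must be verified. \emph{Well-definedness}: since $\mathscr{Z}_h$ preserves $\ker \gamma = H^1_{0,\Gamma}(\R^3)$, the image $\gamma \mathscr{Z}_h U$ does not depend on the choice of representative $U$. \emph{Projection onto $\mathbb{V}_h(\Gamma)$}: for $u \in \mathbb{V}_h(\Gamma) = \gamma(V_h(\Omega \setminus \Gamma))$, picking $U \in V_h(\Omega \setminus \Gamma)$ gives $\mathscr{Z}_h U = U$, hence $\Phi_h u = u$. \emph{Continuity}: pick a near-minimising representative, $\norm{U}_{H^1(\R^3 \setminus \Gamma)} \leq 2\,\norm{u}_{\mathbb{H}^{1/2}(\Gamma)}$, and combine the $H^1$-stability of $\mathscr{Z}_h$ with the quotient-norm inequality $\norm{\gamma W}_{\mathbb{H}^{1/2}(\Gamma)} \leq \norm{W}_{H^1(\R^3 \setminus \Gamma)}$.

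The delicate point is the \emph{preservation of} $H^{1/2}([\Gamma])$. My strategy is to arrange that $\mathscr{Z}_h$ additionally preserves $H^1(\R^3)$; once this holds, for $u \in H^{1/2}([\Gamma])$ I pick a representative $V \in H^1(\R^3)$ and obtain $\mathscr{Z}_h V \in V_h(\Omega \setminus \Gamma) \cap H^1(\R^3) = V_h(\Omega)$, so that $\Phi_h u = \gamma \mathscr{Z}_h V \in \gamma(H^1(\R^3)) = H^{1/2}([\Gamma])$. I would enforce this extra property through a careful choice of Scott--Zhang dual facets: for each generalized vertex $\vec{x}_{i,j}$ with $\vec{x}_i \in \Gamma$, select the dual facet $F_{i,j} \subset \mathrm{int}(\abs{\Delta_{i,j}}) \cap \Gamma$, i.e.\ lying on a triangle of $\mathcal{M}_{\Gamma,h}$ contained in the correct connected sector, and build the dual shape function $\mu_{i,j}$ biorthogonal to $\{\phi_{i',j'}\}$ on this surface facet. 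Because the trace of $V \in H^1(\R^3)$ is single-valued on $\Gamma$, the moment $\lambda_{i,j}(V) = \int_{F_{i,j}} V\, \mu_{i,j}\, d\sigma$ then depends only on $\vec{x}_i$ and not on $j$; invoking the partition-of-unity identity $\sum_j \phi_{i,j} = \phi_i$ collapses the Scott--Zhang expansion into $\sum_i \lambda_i(V)\, \phi_i \in V_h(\Omega)$.

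The main obstacle is to check that this reinforced construction of $\mathscr{Z}_h$ still satisfies the conclusions of \Cref{SZ}. Preservation of $H^1_{0,\Gamma}(\R^3)$ is not jeopardised, since $F_{i,j}$ still sits inside the support of $\phi_{i,j}$, so test functions supported away from $\Gamma$ produce zero moments. The $H^1$-stability follows from the standard local scaling argument of \cite{scott1990finite}, provided the surface facets $F_{i,j}$ remain shape-regular faces of tetrahedra of $\mathcal{M}_{\Omega,h}$ and the biorthogonality between $\{\mu_{i,j}\}$ and the split shape functions can be imposed on facets of $\Gamma$; both points are consequences of the uniform shape-regularity of $\mathcal{M}_{\Omega,h}$ and of the inclusion $\mathcal{M}_{\Gamma,h} \subset \mathcal{F}(\mathcal{M}_{\Omega,h})$. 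Modulo this adaptation of \Cref{SZ}, the four properties above assemble into the statement.
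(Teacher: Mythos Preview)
The paper does not supply a proof of this proposition; it defers entirely to the external reference \cite{averseng2022stable}. Your attempt to construct $\Phi_h$ directly by quotienting a modified Scott--Zhang operator is therefore an independent route, and the overall scheme---build a volume projector onto $V_h(\Omega\setminus\Gamma)$ that simultaneously preserves $H^1_{0,\Gamma}(\R^3)$ and $H^1(\R^3)$, then push it down by $\gamma$---is sound in spirit; indeed it would directly yield the $\Pi_h$ of \Cref{thmQI} and make the paper's two-step construction there redundant.

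However, the key step ``the moment $\lambda_{i,j}(V)$ depends only on $\vec{x}_i$ and not on $j$'' has a genuine gap at non-manifold vertices. Single-valuedness of the trace of $V\in H^1(\R^3)$ on $\Gamma$ only tells you that $\int_{F_{i,j}} V\,\mu_{i,j}\,d\sigma$ is independent of which \emph{side} of $F_{i,j}$ the trace is taken from; it says nothing when the facets $F_{i,j}$ themselves differ across $j$. At a vertex $\vec{x}_i$ with $q_i\geq 3$ sectors (any vertex on a junction line of the multiscreen, precisely the situation your construction must handle), no single triangle of $\mathcal{M}_{\Gamma,h}$ can serve all sectors: a triangle on $\Gamma$ is a face of exactly two tetrahedra of $\mathcal{M}_{\Omega,h}$, hence borders at most two of the $\Delta_{i,j}$. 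You are thus forced to choose distinct $F_{i,j}$ and $\mu_{i,j}$ for different $j$, and then the integrals $\int_{F_{i,j}}V\,\mu_{i,j}$ have no reason to agree even for continuous $V$. Without equality of all sector moments, $\sum_{j}\lambda_{i,j}(V)\phi_{i,j}$ is discontinuous across $\Gamma$ near $\vec{x}_i$, so preservation of $H^1(\R^3)$ fails exactly at the points that distinguish a multiscreen from an ordinary screen. A repair needs a different mechanism---e.g.\ replacing the per-sector moments by a common one and then correcting to restore the projection property, or building the quasi-interpolant intrinsically on the inflated boundary mesh $\mathcal{M}^*_{\Gamma,h}$---which is presumably what \cite{averseng2022stable} supplies.
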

The construction can be found in \cite{averseng2022stable}.

\begin{corollary}[Condition {\bf (A)}]
	\label{thmQI}
	There exists a constant $C > 0$ such that, for each $h > 0$, there exists a projection $\Pi_h: H^1(\R^3 \setminus \Gamma) \to H^1(\R^3 \setminus \Gamma)$ onto  ${V}_h(\Omega \setminus \Gamma)$ which preserves $H^1(\R^3)$ and satisfies
	\[\norm{\Pi_h u}_{H^1(\R^3 \setminus \Gamma)} \leq C \norm{u}_{H^1(\R^3 \setminus \Gamma)} \quad \forall u \in H^1(\R^3 \setminus \Gamma)\,.\]
\end{corollary}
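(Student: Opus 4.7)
The plan is to construct $\Pi_h$ as a hybrid of the bulk Scott--Zhang operator $\mathscr{Z}_h$ and the surface jump-aware quasi-interpolant $\Phi_h$: $\mathscr{Z}_h$ fixes the bulk degrees of freedom, while a correction term driven by $\Phi_h$ adjusts the trace so that single-valued traces are preserved. Concretely, given any $h$-uniformly bounded linear operator $\mathbb{E}_h\colon \mathbb{V}_h(\Gamma)\to V_h(\Omega\setminus\Gamma)$ satisfying $\gamma\,\mathbb{E}_h v = v$ for all $v\in\mathbb{V}_h(\Gamma)$, I set
\[
\Pi_h u \isdef \mathscr{Z}_h u \,+\, \mathbb{E}_h\bigl(\Phi_h \gamma u - \gamma \mathscr{Z}_h u\bigr)\,.
\]

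Assuming such an $\mathbb{E}_h$ is available, the four required properties are verified in turn. Membership in $V_h(\Omega\setminus\Gamma)$ is immediate by construction. For the projection property, if $u\in V_h(\Omega\setminus\Gamma)$ then $\mathscr{Z}_h u = u$ and $\gamma u\in \mathbb{V}_h(\Gamma)$, so the projection property of $\Phi_h$ yields $\Phi_h \gamma u = \gamma u = \gamma\mathscr{Z}_h u$, and the correction vanishes. For preservation of $H^1(\R^3)$: if $u\in H^1(\R^3)$ then $\gamma u \in H^{1/2}([\Gamma])$, and since $\Phi_h$ preserves $H^{1/2}([\Gamma])$ we get $\gamma\Pi_h u = \Phi_h \gamma u \in H^{1/2}([\Gamma])$, equivalently $[\Pi_h u]_\Gamma = 0$; combined with $\Pi_h u\in V_h(\Omega\setminus\Gamma)$, this gives $\Pi_h u \in V_h(\Omega)\subset H^1(\R^3)$. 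Finally, the uniform $H^1(\R^3\setminus\Gamma)$-bound follows from the triangle inequality, using the continuity of $\gamma\colon H^1(\R^3\setminus\Gamma)\to \mathbb{H}^{1/2}(\Gamma)$ and the $h$-uniform bounds for $\mathscr{Z}_h$, $\Phi_h$, and $\mathbb{E}_h$.

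The real work is the construction of $\mathbb{E}_h$; once that is settled, everything above is essentially formal. I propose the composition $\mathbb{E}_h := \mathscr{Z}_h \circ \mathcal{R}$, where $\mathcal{R}\colon \mathbb{H}^{1/2}(\Gamma)\to H^1(\R^3\setminus\Gamma)$ is any bounded linear right-inverse of $\gamma$. Such an $\mathcal{R}$ exists on general Hilbert-quotient grounds: by definition $\mathbb{H}^{1/2}(\Gamma) = H^1(\R^3\setminus\Gamma)/H^1_{0,\Gamma}(\R^3)$, so one may take $\mathcal{R}$ to be the inverse of the restriction of $\gamma$ to the $H^1(\R^3\setminus\Gamma)$-orthogonal complement of $H^1_{0,\Gamma}(\R^3)$, which is an isometry independent of $h$. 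The nontrivial check is that $\gamma \mathbb{E}_h v = v$ for $v\in \mathbb{V}_h(\Gamma)$. Since $\gamma\mathcal{R}v = v \in \mathbb{V}_h(\Gamma)$, the trace-preservation property recalled immediately after \Cref{SZ} (namely, $\gamma\mathscr{Z}_h w = \gamma w$ whenever $\gamma w \in \mathbb{V}_h(\Gamma)$, which follows from the projection property of $\mathscr{Z}_h$ on $V_h(\Omega\setminus\Gamma)$ and the stability of $H^1_{0,\Gamma}(\R^3)$) gives $\gamma\mathscr{Z}_h\mathcal{R}v = \gamma\mathcal{R}v = v$. Uniform boundedness of $\mathbb{E}_h$ is inherited from the $h$-uniform bound of $\mathscr{Z}_h$ and the fixed bound of $\mathcal{R}$, completing the proof.
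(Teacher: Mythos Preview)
Your proof is correct. Both your construction and the paper's use the same three ingredients --- the Scott--Zhang operator $\mathscr{Z}_h$, the jump-aware surface interpolant $\Phi_h$, and the harmonic lifting (your $\mathcal{R}$, the paper's $\mathcal{L}$) --- and both hinge on the same key fact that $\mathscr{Z}_h$ preserves piecewise-linear traces. The paper, however, defines $\Pi_h$ as the pure composition $\mathscr{Z}_h\circ\mathcal{L}\circ\Phi_h\circ\gamma$, which discards all bulk information of $u$ and retains only its multi-trace; your formula instead keeps $\mathscr{Z}_h u$ and adds a correction term to fix the trace. Both operators end up with the same trace $\gamma\Pi_h u = \Phi_h\gamma u$, so the verifications of the projection property and of the preservation of $H^1(\R^3)$ follow the same logic, and stability is immediate either way. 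The paper's composition is slightly cleaner to write; your additive form is arguably more natural if one later wants $\Pi_h$ to inherit local approximation properties of $\mathscr{Z}_h$ in the bulk away from $\Gamma$.
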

\begin{proof}[Proof of \Cref{thmQI}]
	Given $u \in H^1(\R^3 \setminus \Gamma)$, we define $\Pi_h u$ as follows. Let $v_h = \Phi_h (\gamma u)$, and let $V$ be the harmonic lifting of $v_h$, i.e. the element of $H^1(\R^3 \setminus \Gamma)$ with minimal $H^1(\R^3 \setminus \Gamma)$ norm such that $\gamma V = v_h$. Finally, let 
	\[\Pi_h u \isdef \mathscr{Z}_h (V)\,.\]
	It is clear that $\Pi_h$ satisfies the required stability, since all operations used to define it are continuous uniformly in $h$.  To prove that it is a projection, it is convenient to write
	\[\Pi_h = \mathscr{Z}_h \circ \mathcal{L} \circ \Phi_h \circ \gamma\]
	where $\mathcal{L}: \mathbb{H}^{1/2}(\Gamma) \to H^1(\R^3 \setminus \Gamma)$ is the harmonic lifting operator. We claim that
	\begin{equation}
		\label{claimProj}
		\gamma \circ \mathscr{Z}_h \circ \mathcal{L} \circ \Phi_h = \Phi_h
	\end{equation}
	If this holds, then one deduces easily that $\Pi_h$ is a projection writing
	\begin{equation*}
		\Pi_h^2 = \mathscr{Z}_h \circ \mathcal{L} \circ \Phi_h \circ (	\gamma \circ \mathscr{Z}_h \circ \mathcal{L} \circ \Phi_h)  \circ \gamma
		 = \mathscr{Z}_h \circ \mathcal{L} \circ \Phi_h \circ \Phi_h  \circ \gamma
		 = \Pi_h\,,
	\end{equation*}
	since $\Phi_h$ is a projection. To prove eq.~\eqref{claimProj}, we fix $\varphi \in \mathbb{H}^{1/2}(\Gamma)$ and let $\varphi_h = \Phi_h(\varphi)$. Then, by \Cref{lem:StabDisc}, one has 
	\[\gamma (\mathscr{Z}_h \mathcal{L} (\varphi_h)) = \varphi_h\,.\]
	Since this holds for all $\varphi \in \mathbb{H}^{1/2}(\Gamma)$, the claim in eq.~\eqref{claimProj} follows.

	Finally, let us show that $\Pi_h$ preserves $H^1(\R^3)$. Assume that $u \in H^1(\R^3)$, let $v_h = \Phi_h(\gamma u)$ and $V = \mathcal{L}v_h$ the harmonic lifting of $v_h$ as before. Note that $v_h \in H^{1/2}([\Gamma])$ since $\Phi_h$ preserves this space and $\gamma u \in H^{1/2}([\Gamma])$. Let $w_h \in V_h(\Omega \setminus \Gamma)$ be such that $\gamma w_h = v_h$ (the existence of $w_h$ is guaranteed by the fact that $v_h \in \mathbb{V}_h(\Gamma) = \gamma (V_h(\Omega \setminus \Gamma))$. Then we have 
	\[\gamma(V - w_h) = v_h - v_h = 0 \,,\]
	i.e. $V - w_h \in H^1_{0,\Gamma}(\R^3)$. Therefore, since $\mathscr{Z}_h$ preserves $H^1_{0,\Gamma}(\R^3)$, we conclude
	\[ 0 = \gamma(\mathscr{Z}_h (V - w_h)) = \gamma \Pi_h u - \gamma \mathscr{Z}_h w_h = \gamma\Pi_h u - \gamma w_h = \gamma \Pi_h u - v_h\,,\]
	recalling that $\mathscr{Z}_h V = \mathscr{Z}_h \mathcal{L} \Phi_h \gamma  u = \Pi_h u$ by definition of $\Pi_h$, and using that $\mathscr{Z}_h w_h = w_h$. In other words, $\gamma \Pi_h u = v_h \in H^{1/2}([\Gamma])$, i.e. $\Pi_h u \in H^1(\R^3)$. This concludes the proof.
\end{proof}

\paragraph{Proof of {\bf (B)}} We show that the condition {\bf (B)} holds in the following lemma.
\begin{lemma}[Condition {\bf (B)}]
	\label{lem:satB}
	The condition $\textup{\textbf{(B)}}$ is satisfied by every space $V_n$ except the wire-basket space $\mathbf{V}_{\mathcal{W}}$ in the splitting \eqref{eq:proposedSplit}, with a constant $\kappa_n = O(1)$.
\end{lemma}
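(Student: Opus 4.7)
The plan is to verify condition \textbf{(B)} case-by-case for each subspace $V_n$ in the splitting~\eqref{eq:proposedSplit} other than the wire-basket. The trivial cases are the non-$\Gamma$ face spaces $\mathbf{V}_{\mathcal{F}^k}$ (with $\mathcal{F}^k \not\subset \Gamma$) and the interior spaces $V_{h,0}(\Omega_j)$ with $\partial\Omega_j \cap \Gamma = \emptyset$: for the former, a vertex $\vec x_i$ interior to $\mathcal{F}^k$ satisfies $q_i = 1$, so $\phi_{i,1} = \phi_i$ and $\omega_{i,1} = \phi_i - \sum_k \varepsilon_{i,1}^k$ are continuous; for the latter, all functions vanish on $\partial\Omega_j$. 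In both cases $V_n \subset H^1(\R^3)$ and \textbf{(B)} holds trivially with $\kappa_n = 1$.

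For the coarse space $V_H(\Omega\setminus\Gamma)$, the plan is to apply a coarse-mesh analog $\Pi_H$ of the quasi-interpolant of~\Cref{thmQI}, obtained by repeating that construction on $\mathcal{M}_{\Omega, H}$ instead of $\mathcal{M}_{\Omega, h}$. This $\Pi_H$ is a projection onto $V_H(\Omega\setminus\Gamma)$, preserves $H^1(\R^3)$, and is uniformly bounded. For $u \in V_H$ and any $v_0 \in V_h \cap H^1(\R^3)$, choosing $v_{0,n} \isdef \Pi_H v_0 \in V_H \cap H^1(\R^3)$ and using $\Pi_H u = u$ gives $\|u - v_{0,n}\|_{H^1(\R^3\setminus\Gamma)} = \|\Pi_H(u - v_0)\|_{H^1(\R^3\setminus\Gamma)} \le \|\Pi_H\|\,\|u - v_0\|_{H^1(\R^3\setminus\Gamma)}$, yielding \textbf{(B)} with $\kappa_n = \|\Pi_H\|$.

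The nontrivial remaining cases are the face-jump spaces $\mathbf{V}_{\mathcal{F}^k,\nu}$ (with $\mathcal{F}^k \subset \Gamma$) and the interior spaces $V_{h,0}(\Omega_j)$ with $\partial\Omega_j \cap \Gamma \neq \emptyset$. For $u \in V_n$, with $\Omega_\ell$ the coarse tetrahedron containing the support of $u$, I decompose $u = u_{\rm harm} + u_{\rm bub}$ where $u_{\rm harm}$ is the $H^1(\Omega_\ell)$-orthogonal projection of $u$ onto the discrete harmonics in $\Omega_\ell$ and $u_{\rm bub}$ is the bubble remainder; since $u_{\rm bub}$ has zero trace on $\partial\Omega_\ell$, it lies in $V_n \cap H^1(\R^3)$, and therefore
\[\min_{v_{0,n}\in V_n\cap H^1(\R^3)} \|u - v_{0,n}\|_{H^1(\R^3\setminus\Gamma)} \le \|u_{\rm harm}\|_{H^1(\R^3\setminus\Gamma)}\,.\]
From the quotient-norm identity
\[\|[u]_\Gamma\|_{\widetilde H^{1/2}([\Gamma])} = \inf_{w \in H^1(\R^3)} \|u - w\|_{H^1(\R^3\setminus\Gamma)}\,,\]
which follows from the definition of the quotient norm together with $H^1_{0,\Gamma}(\R^3) \subset H^1(\R^3)$, one has $\|[u]_\Gamma\|_{\widetilde H^{1/2}([\Gamma])} \le \min_{v_0 \in V_h\cap H^1(\R^3)} \|u - v_0\|_{H^1(\R^3\setminus\Gamma)}$, so \textbf{(B)} reduces to the one-sided-to-quotient bound
\[\|u_{\rm harm}\|_{H^1(\R^3\setminus\Gamma)} \le C\,\|[u]_\Gamma\|_{\widetilde H^{1/2}([\Gamma])}\,.\]
Since $u_{\rm harm}$ is the one-sided discrete harmonic extension in $\Omega_\ell$ of its trace $g \isdef u_{\rm harm}|_{\Gamma \cap \partial\Omega_\ell}$, the scale-invariant discrete extension/trace estimates of~\cite{pechstein2013shape} give $\|u_{\rm harm}\|_{H^1(\Omega_\ell)} \sim \|g\|_{\widetilde H^{1/2}(\Gamma \cap \partial\Omega_\ell,\, \partial\Omega_\ell)}$. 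Conversely, any continuous representative of $[u]_\Gamma$ has traces $g_1, g_2$ on the two sides of each coarse face of $\Gamma \cap \partial\Omega_\ell$ with $g_1 - g_2 = g$, and minimizing $\|g_1\|^2_{H^{1/2}(\partial\Omega_\ell)} + \|g_2\|^2_{H^{1/2}(\partial\Omega_m)}$ (the semi-norms of~\Cref{defH12norms}) over this affine constraint, using scale invariance once more, delivers the matching $H$-uniform lower bound. I expect the principal technical obstacle to be this last estimate — reconciling the global $\widetilde H^{1/2}([\Gamma])$ quotient norm with local $\widetilde H^{1/2}$ norms on individual coarse faces, with constants independent of $H$ — which is exactly where the shape regularity of $\mathcal{M}_{\Omega,H}$ and the scale invariance from~\cite{pechstein2013shape} enter decisively.
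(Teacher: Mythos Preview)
Your treatment of the trivial cases and of the coarse space $V_H(\Omega\setminus\Gamma)$ matches the paper's argument. One minor over-complication: the interior spaces $V_{h,0}(\Omega_j)$ are all contained in $H^1(\R^3)$ regardless of whether $\partial\Omega_j$ meets $\Gamma$, since their elements vanish on \emph{all} of $\partial\Omega_j$ (this is how the paper uses them, cf.\ the line after \Cref{def:omegaij} asserting $\gamma(\omega_{i,j})=\gamma(\phi_{i,j})$). So your harmonic/bubble splitting for those spaces is unnecessary --- the condition is vacuous there.

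The substantive gap is in the $\Gamma$-face case $\mathbf{V}_{\mathcal{F}^k,\nu}$. You correctly reduce \textbf{(B)} to the inequality $\norm{u_h}_{H^1(\R^3\setminus\Gamma)}\le C\,\norm{[u_h]_\Gamma}_{\widetilde H^{1/2}([\Gamma])}$, and you correctly estimate the left side by a local $\widetilde H^{1/2}$-type norm of the trace on $\partial\Omega_\ell$ via scale-invariant extension results. But your proposed bridge from that local trace norm to the global quotient norm $\norm{[u_h]_\Gamma}_{\widetilde H^{1/2}([\Gamma])}$ --- minimising $\norm{g_1}^2_{H^{1/2}(\partial\Omega_\ell)}+\norm{g_2}^2_{H^{1/2}(\partial\Omega_m)}$ over $g_1-g_2=g$ --- does not match the actual quotient structure (which is an infimum over $w\in H^1(\R^3)$ in the volume norm, not over surface pairs), and you do not explain how to control the constants uniformly in $H$ at this step. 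This is not a presentational issue; it is the crux of the argument, and you leave it open.

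The paper bypasses this difficulty entirely by using the hypersingular bilinear form $a$ as the bridge. Since $u_h$ is supported in a single subdomain $\Omega_\ell$, the integral representation of \Cref{thmWweak} collapses to the classical hypersingular form on $\partial\Omega_\ell$:
\[
a([u_h]_\Gamma,[u_h]_\Gamma)=(W_\ell\varphi_h,\varphi_h)_{L^2(\partial\Omega_\ell)},\qquad \varphi_h=\gamma_\ell u_h.
\]
Then \cite[Lemma~6.5]{pechstein2013shape} gives $\abs{\varphi_h}^2_{H^{1/2}(\partial\Omega_\ell)}\le c_\ell(W_\ell\varphi_h,\varphi_h)$ with $c_\ell$ bounded by shape regularity, and the coercivity estimate of \Cref{lemequivInner} gives $a([u_h]_\Gamma,[u_h]_\Gamma)\le C_W\norm{[u_h]_\Gamma}^2_{\widetilde H^{1/2}([\Gamma])}$. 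Chaining these with a Poincar\'e inequality on $\Omega_\ell$ yields the desired bound directly, with no need to compare local and global $\widetilde H^{1/2}$ norms. This is the key idea you are missing.
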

\begin{proof}
	The condition is vacuous for the subspaces $\mathbf{V}_{\mathcal{F}^k}$ with $\mathcal{F}^k \cap \Gamma = \emptyset$, because in this case, $\mathbf{V}_{\mathcal{F}^k} \subset H^1(\R^3)$, hence the left-hand side of the inequality is $0$. For the same reason, the condition is satisfied for the spaces $V_{h,0}(\Omega_j)$. 
	
	\begin{itemize}
		\item For the space $V_H(\Omega \setminus \Gamma)$, we have by  \Cref{lemDiscreteTraceNorm} and \Cref{thmQI} 
		\[\min_{v_{H,0} \in V_H(\Omega \setminus \Gamma) \cap H^1(\Omega)} \norm{u - v_{H,0}}_{H^1(\R^3 \setminus \Gamma)} \leq \norm{\Pi_H}_{\mathcal{L}(H^1(\Omega \setminus \Gamma))} \norm{[u]_\Gamma}_{\widetilde{H}^{1/2}([\Gamma])} \quad \forall u \in \mathbb{H}^{1/2}(\Gamma)\,.\]
		This implies condition \textbf{(B)} for this space by the quotient definition of the $\widetilde{H}^{1/2}([\Gamma])$ norm. 
		\item If $\mathcal{F}^k \subset \Gamma$, then we have $\mathbf{V}_{\mathcal{F}^k,\nu} \cap H^1(\R^3) = \{0\},$ for $\nu = 1,2$, so it suffices to show
		\[\norm{u_h}_{H^1(\R^3 \setminus \Gamma)} \leq C \norm{[u_h]_\Gamma}_{\widetilde{H}^{1/2}([\Gamma])}\,, \quad \forall u_h \in \mathbf{V}_{\mathcal{F}^k,\nu}\,. \]
		Let $u_h \in \mathbf{V}_{\mathcal{F}^k,\nu}$ and $\varphi_h = \gamma_\ell u_h$, where $\ell$ is the index such that $\textup{supp}\,u_h \subset \Omega_\ell$. Let $U \in H^1(\Omega_\ell)$ be the unique element of $H^1(\Omega_\ell)$ with $\gamma_\ell U = \varphi_h$ and 
		\[\norm{\nabla U}^2_{L^2(\Omega_\ell)} = \abs{\varphi_h}^2_{H^{1/2}(\partial \Omega_\ell)}\,,\]       
		in view of \Cref{defH12norms}. With $\mathscr{Z}_\ell$ a Scott-Zhang interpolant onto $V_h(\Omega_\ell)$, let $U_h = \mathscr{Z}_\ell U$. One has, on the one hand,
		\[\norm{U_h}^2_{H^1(\Omega_\ell)} \leq C \norm{U}^2_{H^1(\Omega_\ell)}\,,\]
		by the mapping properties of $\mathscr{Z}_\ell$, and, on the other hand,
		\[\norm{u_h}^2_{H^1(\R^3 \setminus \Gamma)}  = \norm{u_h}^2_{H^1(\Omega_\ell)} \leq \norm{U_h}^2_{H^1(\Omega_\ell)}\,.\] 
		Here we used the minimizing property of discrete harmonics, and the fact that $\gamma_\ell(U_h) = \varphi_h$, since $\mathscr{Z}_\ell$ preserves piecewise linear boundary values. Since $\varphi_h$ vanishes on the faces of $\Omega_\ell$ distinct from $\mathcal{F}^k$, we can find a face $\mathcal{F}$ of $\Omega_\ell$ such that $U_h$ vanishes on $\mathcal{F}$, hence, by \cite[Lemma 2.57]{pechstein2013finite},
		\[\norm{U}^2_{L^2(\Omega_\ell)} \leq C H^2 \norm{\nabla U}^2_{L^2(\Omega_\ell)} \leq C \norm{\nabla U}^2_{L^2(\Omega_\ell)}\,,\]
		since $H$ is bounded. Combining these estimates, we arrive at 
		\[\norm{u_h}^2_{H^1(\R^3 \setminus \Gamma)} \leq C \abs{\varphi_h}^2_{H^{1/2}(\Gamma)}\,.\]
		By \cite[Lemma 6.5]{pechstein2013shape}, there holds
		\[\abs{\varphi_h}^2_{H^{1/2}(\partial \Omega_\ell)}\leq c_\ell (W_{\ell} \varphi_h,\varphi_h)_{L^2(\partial \Omega_\ell)}\]
		where $W_{\ell}$ is the classical hypersingular operator on $\partial \Omega_\ell$, and where the constant $c_\ell$ is uniformly bounded, due the shape-regularity of the coarse mesh. Since $u_h$ vanishes on all $\Omega_m$ for $m \neq \ell$, by \Cref{thmWweak}, we have 
		\[a([u_h]_\Gamma,[u_h]_\Gamma) = \iint_{\partial \Omega_\ell \times \partial \Omega_\ell} \frac{\curl_\ell \varphi_h(x) \cdot \curl_\ell \varphi_h(y) }{4\pi \norm{x - y}} = (W_\ell \varphi_h,\varphi_h)_{L^2(\partial \Omega_\ell)}\,.\]
		Therefore, 
		\[\norm{u_h}^2_{H^1(\R^3 \setminus \Gamma)} \leq C a([u_h]_\Gamma,[u_h]_\Gamma) \leq C_W \norm{[u_h]_\Gamma}^2_{\widetilde{H}^{1/2}([\Gamma])}\,,\]
		which implies condition {\bf (B)} for the face space $\mathbf{V}_{\mathcal{F}^k,\nu}$. 
	\end{itemize}
\end{proof}

	\bibliographystyle{plain}
	\bibliography{biblio.bib}

	\appendix

	\section{Proof of \Cref{thmBVP}}
	\label{sec:appPoincare}
\begin{proof}
	The existence and uniqueness of $\varphi^*$ follows from the Riesz representation theorem since $a(\cdot,\cdot)$ defines a scalar product on $\widetilde{H}^{1/2}(\Gamma)$. In turn, the properties of the double-layer potential stated below \Cref{def:DL} imply that $\mathcal{U} = \DL \varphi^*$ satisfies $\Delta \mathcal{U} = 0$ on $\R^3 \setminus \Gamma$ and $\mathcal{U} = O\left(\frac{1}{\norm{x}}\right)$. Moreover, by definition of the bilinear form $a(\cdot,\cdot)$ and using the jump relation $[\DL \varphi^*] = [\varphi^*]$,  
	\[a([v]_\Gamma,\varphi^*) = \dduality{v}{\pi_n(\nabla \mathcal{U})}\qquad \forall v \in \mathbb{H}^{1/2}(\Gamma)\,.\]
	Notice that for all $v \in \mathbb{H}^{1/2}(\Gamma)$, one has $a(\varphi^*,[v]_\Gamma) = l_g([v]_\Gamma) = \dduality{v}{\mathbf{g}}$ by definition of $\varphi^*$ and $l_g$ and using that $a(\cdot,\cdot)$ is symmetric. Therefore, 
	\[\dduality{v}{\pi_n (\mathbf{g}) - \pi_n(\nabla \mathcal{U})} = 0\, \quad \forall v \in \mathbb{H}^{1/2}(\Gamma)\,,\]
	implying that $\pi_n(\mathbf{g}) = \pi_n(\nabla \mathcal{U})$ in $\mathbb{H}^{-1/2}(\Gamma)$ by eq.~\eqref{eq:isometricPairing}. It remains to prove the uniqueness of the solution $\mathcal{U}$. Let $\mathcal{U}$ be a solution of the PDE with $\mathbf{g} = 0$. The boundary condition $\pi_n (\nabla \mathcal{U})$ can then be rephrased as $\nabla \mathcal{U} \in H_{0,\Gamma}(\div,\R^3)$. Let $\rho_0 >0$ be sufficiently large so that $\Gamma$ is contained in a ball $\subset B_{\rho_0} \isdef \enstq{x \in \R^3}{\norm{x} < \rho_0}$. Observe that by the representation theorem, there holds 
	\begin{equation}
		\mathcal{U}(x) = \int_{\partial B_\rho} \frac{\vec n_\rho(y) \cdot (x - y)}{4\pi\norm{x - y}^3}\mu(y)dy-\int_{\partial B_\rho} \frac{1}{4\pi\norm{x - y}}\lambda(y)dy \quad \forall x \in (\overline{B_\rho})^c
	\end{equation}
	where $\mu(y) = \mathcal{U}(y) \in H^{1/2}(\partial B_\rho)$ and $\lambda(y) = \frac{y}{\norm{y}} \cdot \nabla \mathcal{U}(y) \in H^{-1/2}(\partial B_\rho)$. In particular, one has $\nabla \mathcal{U} = O\left(\frac{1}{\norm{x}^2}\right)$ uniformly as $x \to \infty$. Given $\rho \geq \rho_0$, integrating by parts on $B_\rho \cap (\R^3 \setminus \Gamma)$ and using that $\Delta \mathcal{U} = 0$ on each $\R^3 \setminus \Gamma$ and $\nabla \mathcal{U} \in H_{0,\Gamma}(\div,\R^3)$, we obtain
	\begin{equation}
		\label{eq:IBPuniqueness}
		\int_{B_\rho \cap (\R^3 \setminus \Gamma)} \abs{\nabla \mathcal{U}}^2 = \int_{\partial B_\rho} \mathcal{U} \left(\frac{x}{\norm{x}} \cdot \nabla \mathcal{U}\right)dx\,.
	\end{equation}
	The decay conditions for $\mathcal{U}$ and $\nabla \mathcal{U}$ imply that the right-hand in eq.~\eqref{eq:IBPuniqueness} tends to $0$ as $\rho \to \infty$. We conclude that $\nabla \mathcal{U} = 0$ on $\R^3 \setminus \Gamma$ hence $\mathcal{U} = 0$ since $\R^3\setminus \Gamma$ is connected. 
\end{proof}
	
	\section{A dense subspace of $H^1(\R^3 \setminus \Gamma)$}
	\label{sec:proofDensity}
	\begin{definition}[Space $X^\infty$, see also \cite{claeys2021quotient}]
		Let $X^\infty$ be the space defined by 
		\[X^\infty \isdef \enstq{u \in H^1(\R^3\setminus \Gamma) \cap C^\infty(\R^3 \setminus \Gamma)}{u_j \in C^\infty(\overline{\Omega_j}) \textup{ for all } j = 0,\ldots, J}\,,\]
		where, for any open set $U \subset \R^3$, the set $C^\infty(\overline{U})$ is the set of restrictions to $U$ of elements of $C^\infty(\R^3)$. 
	\end{definition}
	The goal of this section is to prove the following result, stated in \cite{claeys2021quotient}:
	\begin{theorem}[Density of $X^\infty$]
		\label{thm:densityXinfty}
		The set $X^\infty$ is dense in $H^1(\R^d \setminus \Gamma)$. 
	\end{theorem}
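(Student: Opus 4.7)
My plan is to reduce the density statement to the classical theorem that $C^\infty(\overline U)$ is dense in $H^1(U)$ for every Lipschitz domain $U$, via a partition-of-unity argument adapted to the multi-screen structure. The starting observation is that, through the restriction map $u \mapsto (u|_{\Omega_j})_{j=0}^J$, $H^1(\R^3 \setminus \Gamma)$ identifies isometrically with the closed subspace of $\prod_{j=0}^J H^1(\Omega_j)$ whose entries have matching traces on every non-$\Gamma$ piece of the skeleton $\widetilde{\Sigma} \isdef (\partial\Omega_0 \cup \ldots \cup \partial\Omega_J) \setminus \Gamma$, while jumps are freely allowed across $\Gamma$. Since each $\Omega_j$ is a Lipschitz polyhedron, Meyers-Serrin provides density of $C^\infty(\overline{\Omega_j})$ in $H^1(\Omega_j)$ piece by piece, so the whole task is to assemble local smooth approximants into a global element of $X^\infty$ while preserving trace-matching on $\widetilde{\Sigma}$.

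I would first truncate $u$ by a smooth compactly supported bump in $\R^3$ to reduce to the compactly supported case. Next I construct a finite open cover $\{W_\alpha\}$ of $\mathrm{supp}(u)$ with subordinate smooth partition of unity $\{\chi_\alpha\}$, where each $W_\alpha$ is of one of two types. \textbf{Type (i):} $W_\alpha \cap \Gamma = \emptyset$. Then $W_\alpha$ is an open subset of $\R^3$, $u|_{W_\alpha}$ is a bona fide $H^1$-function (already continuous across any piece of $\widetilde{\Sigma}$ it contains), and standard mollification in $\R^3$ produces approximants in $C^\infty(\R^3)$, in particular smooth across $\widetilde{\Sigma}$ and up to each $\overline{\Omega_j}$. \textbf{Type (ii):} $W_\alpha$ meets $\Gamma$. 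Then, using the polyhedral structure of $\Gamma$ inherited from $\mathcal{M}_\Gamma$, the open set $W_\alpha \cap (\R^3 \setminus \Gamma)$ splits into finitely many connected components $\mathcal{S}_\beta$, each a Lipschitz domain obtained by fusing adjacent $\Omega_j$'s across the portions of $\widetilde{\Sigma}$ that lie interior to $\mathcal{S}_\beta$. On each $\mathcal{S}_\beta$, Meyers-Serrin yields smooth approximants in $C^\infty(\overline{\mathcal{S}_\beta})$, which are restrictions of $C^\infty(\R^3)$-functions, hence smooth across the enclosed pieces of $\widetilde{\Sigma}$ and up to every $\overline{\Omega_j}$ they meet. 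Summation through the partition of unity then yields the required element of $X^\infty$ approximating $u$ in $H^1(\R^3 \setminus \Gamma)$.

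The main obstacle is to verify that the components $\mathcal{S}_\beta$ inside type-(ii) balls are indeed Lipschitz domains, particularly near points where $\partial\Gamma$ meets $\widetilde{\Sigma}$, since there $\Gamma$ and $\widetilde{\Sigma}$ cannot be separated by any small ball. This is where the polyhedral hypothesis on the multi-screen is essential: in a sufficiently small ball about any point of $\Gamma$, $\R^3 \setminus \Gamma$ is a finite arrangement of polyhedral sectors glued across their non-$\Gamma$ faces, and the assumption in \Cref{def:polygonalMS} that $\Gamma \cap \partial \Omega_j$ is a Lipschitz screen for each $j$ keeps the local geometry tame enough to guarantee that each $\mathcal{S}_\beta$ is Lipschitz. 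Once this local Lipschitz property is established, the global density reduces to a finite gluing of classical density results on Lipschitz domains via the partition of unity.
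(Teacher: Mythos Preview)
Your argument has a genuine gap at the boundary $\partial\Gamma$ of the multi-screen. Consider the simplest case: $\Gamma$ a single flat triangle, and $p$ a point in the relative interior of one of its edges. Any small ball $W_\alpha$ about $p$ meets $\Gamma$, so it is a type-(ii) ball, and $W_\alpha \setminus \Gamma$ is \emph{connected} (one can go around the edge). Hence there is a single component $\mathcal{S}_\beta = W_\alpha \setminus \Gamma$, which is a ball with a planar slit reaching into its interior. This is the prototypical non-Lipschitz domain: it fails even the segment property at the crack tip, and more decisively, $C^\infty(\overline{\mathcal{S}_\beta})$ (restrictions of global $C^\infty(\R^3)$ functions) is \emph{not} dense in $H^1(\mathcal{S}_\beta)$, because such restrictions necessarily have matching traces on the two faces of the slit, whereas a general element of $H^1(\R^3\setminus\Gamma)$ restricted to $\mathcal{S}_\beta$ may jump across $\Gamma$. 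So the step ``Meyers--Serrin yields smooth approximants in $C^\infty(\overline{\mathcal{S}_\beta})$'' fails precisely where the difficulty of the problem lies. The polyhedral hypothesis and the assumption that each $\Gamma\cap\partial\Omega_j$ is a Lipschitz screen do not rescue this: those conditions control the geometry of $\Gamma$ and of the $\Omega_j$, not of $\R^3\setminus\Gamma$ near $\partial\Gamma$.

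The paper's proof confronts exactly this obstruction and removes it in a preliminary step: it invokes \cite[Prop.~8.11]{claeys2013integral} (restated as \Cref{prop:density2013}) to reduce to functions $u$ that vanish in a neighborhood of $\bigcup_j \partial\Gamma_j$. Once the crack tips are excised, a cutoff argument (\Cref{decomp}) writes $u=u_1+u_2$ with $u_2\in H^1(\R^3)$ (approximated by $C^\infty_c(\R^3)$) and $u_1$ vanishing near each $\overline{\Sigma_j}=\overline{\partial\Omega_j\setminus\Gamma_j}$; then $u_1$ is approximated separately on each Lipschitz domain $\Omega_j$ by smooth functions that still vanish near $\Sigma_j$, so the pieces glue across $\Sigma_j$ into an element of $X^\infty$. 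Your partition-of-unity strategy would work \emph{after} this reduction (indeed, away from $\partial\Gamma$ your type-(i)/type-(ii) dichotomy is fine), but the reduction itself is the nontrivial ingredient you are missing.
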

	In what follows, for $j = 0,\ldots,J$, let
	\[\Sigma_j \isdef \partial \Omega_j \setminus \overline{\Gamma_j}\]
	where $\Gamma_j = \Gamma \cap \partial \Omega_j$. Recall that both $\Gamma_j$ and $\Sigma_j$ are simple Lipschitz screens with $\partial \Gamma_j = \partial \Sigma_j$. We recall a result from \cite{claeys2013integral} (the statement therein is weaker than the one below, but the proof given in that reference actually proves the stronger statement).
	\begin{proposition}[{\cite[Prop. 8.11]{claeys2013integral}}]
		\label{prop:density2013}
		The set of functions $u \in H^1(\R^d \setminus \Gamma)$ which vanish in a neighborhood of $\cup_{j = 0}^J \partial \Gamma_j$ is dense in $H^1(\R^d \setminus \Gamma)$. 
	\end{proposition}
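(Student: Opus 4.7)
The plan is to use \Cref{prop:density2013} as a preliminary reduction, and then approximate by elements of $X^\infty$ using a partition of unity adapted to the local structure of $\Gamma$ away from its non-manifold set. By \Cref{prop:density2013}, it suffices to show that any $v \in H^1(\R^3 \setminus \Gamma)$ which vanishes on an open neighborhood $V$ of $\cup_{j=0}^J \partial \Gamma_j$ can be approximated arbitrarily well in the $H^1$ norm by elements of $X^\infty$. The crucial geometric point is that $\cup_j \partial \Gamma_j$ contains all of the non-manifold edges, triple junctions, and free edges of $\Gamma$; consequently, at any point of $\Gamma \setminus \overline{V}$, the set $\Gamma$ locally coincides with a single Lipschitz $2$-manifold separating two adjacent subdomains $\Omega_{k_+}$ and $\Omega_{k_-}$.

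Given such a $v$, I would cover $\mathrm{supp}(v)$ by a finite collection of open balls $\{B_\ell\}_{\ell=1}^N$, chosen sufficiently small that each ball falls into one of two cases: (a) $B_\ell \cap \Gamma = \emptyset$; or (b) $B_\ell$ meets $\Gamma$ only in a piece of a single Lipschitz $2$-manifold contained in $\Gamma \setminus \overline{V}$, and $B_\ell \setminus \Gamma$ decomposes into exactly two open half-balls $B_\ell^{\pm}$, each entirely contained in a single domain $\Omega_{k_{\pm}}$, with $B_\ell \cap \partial \Omega_{k_{\pm}} \subset \Gamma$. Let $\{\psi_\ell\}$ be a smooth partition of unity subordinate to $\{B_\ell\}$ on $\mathrm{supp}(v)$, so that $v = \sum_\ell \psi_\ell v$, and it suffices to construct a local approximation $w_\ell \in X^\infty$ of each $\psi_\ell v$.

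In case (a), $\psi_\ell v$ has no jumps inside $B_\ell$, so its extension by zero lies in $H^1(\R^3)$; a standard mollification yields $w_\ell \in C^\infty_c(\R^3) \subset X^\infty$ arbitrarily close to $\psi_\ell v$ in $H^1$. In case (b), I would approximate the two restrictions $(\psi_\ell v)|_{\Omega_{k_{\pm}}}$ independently using density of $C^\infty(\overline{\Omega_{k_{\pm}}})$ in $H^1(\Omega_{k_{\pm}})$ (valid since each $\Omega_{k_{\pm}}$ is a Lipschitz domain), then multiply each resulting smooth function by a cutoff $\chi_\ell$ equal to $1$ on $B_\ell$ and supported in a slightly enlarged ball $B'_\ell$ with $B'_\ell \cap \partial \Omega_{k_{\pm}} \subset \Gamma$. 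Define $w_\ell$ globally by setting $w_\ell|_{\Omega_{k_{\pm}}}$ equal to the cutoff-truncated approximation and $w_\ell|_{\Omega_m} = 0$ for $m \notin \{k_+, k_-\}$. The choice of $\chi_\ell$ guarantees that $w_\ell$ vanishes in a neighborhood of every $\partial \Omega_m \setminus \Gamma$ meeting its support, so across any such interface the traces from both sides are zero and agree in the $C^\infty$ sense; hence $w_\ell \in C^\infty(\R^3 \setminus \Gamma)$ with each $(w_\ell)_j \in C^\infty(\overline{\Omega_j})$, i.e. $w_\ell \in X^\infty$.

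Summing, $w = \sum_\ell w_\ell \in X^\infty$ and by the triangle inequality $\norm{v - w}_{H^1(\R^3 \setminus \Gamma)} \leq \sum_\ell \norm{\psi_\ell v - w_\ell}_{H^1(\R^3 \setminus \Gamma)}$ can be made arbitrarily small. The main obstacle is the geometric step underlying case (b): one must verify rigorously that, for every point $x \in \Gamma \setminus \overline{V}$, sufficiently small balls centered at $x$ touch only the boundaries of the two adjacent subdomains $\Omega_{k_{\pm}}$ (and only along $\Gamma$), so that the cutoff localization does not introduce spurious mismatches across other $\Sigma_m$ where continuity of the approximation would otherwise be required. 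This in turn relies on the essential fact that the whole non-manifold skeleton of $\Gamma$ is captured by $\cup_j \partial \Gamma_j$, hence lies inside the neighborhood $V$ where $v$ (and by construction also $w$) vanishes.
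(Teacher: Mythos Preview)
Your proposal does not prove \Cref{prop:density2013} at all. The statement you were asked to prove is that functions in $H^1(\R^3\setminus\Gamma)$ vanishing in a neighborhood of $\cup_{j}\partial\Gamma_j$ are dense in $H^1(\R^3\setminus\Gamma)$. Instead, your argument takes this proposition as its \emph{starting point} (``The plan is to use \Cref{prop:density2013} as a preliminary reduction\ldots'') and then goes on to establish that such functions can be approximated by elements of $X^\infty$. In other words, you have written a proof of \Cref{thm:densityXinfty} (density of $X^\infty$), not of \Cref{prop:density2013}, and your argument is circular with respect to the target statement.

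The paper itself does not supply a proof of \Cref{prop:density2013}; it is quoted from \cite[Prop.~8.11]{claeys2013integral}. A self-contained proof would need to show directly that an arbitrary $u\in H^1(\R^3\setminus\Gamma)$ can be approximated by functions vanishing near the one-dimensional set $\cup_j\partial\Gamma_j$. This is a capacity-type argument: because $\cup_j\partial\Gamma_j$ is a finite union of closed line segments in $\R^3$, it has zero $H^1$-capacity, and one constructs a sequence of Lipschitz cutoffs $\chi_\varepsilon$ equal to $0$ on a shrinking tubular neighborhood of $\cup_j\partial\Gamma_j$ and $1$ outside a slightly larger one, with $\|\nabla\chi_\varepsilon\|_{L^2}\to 0$; then $\chi_\varepsilon u\to u$ in $H^1(\R^3\setminus\Gamma)$. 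None of that reasoning appears in your proposal.

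If your intention was actually to prove \Cref{thm:densityXinfty}, then your partition-of-unity approach is a legitimate alternative to the paper's strategy (which splits $u=u_1+u_2$ globally via \Cref{decomp} and then approximates each piece separately using \Cref{lemtmpDensityXinfty}). But as written, the proposal addresses the wrong target.
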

	
	\begin{lemma}
		Let $K$ be a compact set of $\R^n$ and $V \subset \R^n$ such that $d(K,V) > 0$. Then there exists a function $f \in W^{1,\infty}(\R^d)$ such that $f = 0$ on $K$ and $f = 1$ on $V$. 
	\end{lemma}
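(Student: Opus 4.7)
The plan is to build $f$ explicitly from the distance function to $K$. Set $\delta \isdef d(K,V) > 0$ and define
\[g: \R^n \to \R\,, \quad g(x) \isdef d(x,K) = \inf_{y \in K}\norm{x - y}\,.\]
By a standard argument (triangle inequality), $g$ is $1$-Lipschitz on $\R^n$, hence $g \in W^{1,\infty}(\R^n)$ with $\norm{\nabla g}_{L^\infty} \leq 1$. Moreover, since $K$ is compact (so closed), $g(x) = 0$ if and only if $x \in K$, and $g(x) \geq \delta$ for every $x \in V$ by definition of $\delta$.

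Next, I would truncate and rescale. Set
\[f(x) \isdef \min\!\left(\frac{g(x)}{\delta},\,1\right)\,.\]
Then $f$ is a composition of Lipschitz functions ($\min(\cdot,1)$ is $1$-Lipschitz, and $g/\delta$ is $(1/\delta)$-Lipschitz), so $f$ is Lipschitz with constant at most $1/\delta$. By Rademacher's theorem (or by direct inspection using that $\nabla \min(h,1) = \nabla h\cdot \mathbf{1}_{\{h < 1\}}$ a.e. for Lipschitz $h$), this gives $f \in W^{1,\infty}(\R^n)$ with $\norm{f}_{L^\infty} \leq 1$ and $\norm{\nabla f}_{L^\infty} \leq 1/\delta$. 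Finally, $f = 0$ on $K$ (because $g = 0$ there) and $f = 1$ on $V$ (because $g \geq \delta$ on $V$, so the truncation activates).

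There is no real obstacle; the only subtle point is verifying that truncation preserves the $W^{1,\infty}$ property, which follows either from the chain rule for Lipschitz functions or from the observation that a composition of Lipschitz maps is Lipschitz and that $W^{1,\infty}(\R^n)$ coincides with the space of bounded Lipschitz functions on $\R^n$. Note that compactness of $K$ is used only to ensure that $g$ is finite-valued (any nonempty $K$ would suffice) and that $d(K,V)$ is actually attained/strictly positive in the natural setting where $V$ is also closed; the hypothesis $d(K,V) > 0$ is what is really needed in the construction.
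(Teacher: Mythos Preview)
Your proof is correct and follows essentially the same idea as the paper: both constructions exploit the fact that the distance function $x \mapsto d(x,K)$ is $1$-Lipschitz and use the hypothesis $d(K,V)>0$ to rescale. The only difference is cosmetic: the paper takes the Urysohn-type ratio $f(x)=d(x,K)/(d(x,K)+d(x,V))$ and checks directly that it is $(1/\delta)$-Lipschitz, whereas you take the truncation $f(x)=\min(d(x,K)/\delta,1)$; both yield the same Lipschitz constant and the same conclusion.
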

	\begin{proof}
		It suffices to put $f(x) = \frac{d(x,K)}{d(x,K) + d(x,V)}$. First notice that $$g(x) \isdef d(x,K) + d(x,V) \geq d(K,V) =: \alpha > 0$$
		since $K$ is compact. Moreover,
		\begin{align*}
			\abs{f(x) - f(y)} &= \abs{\frac{d(y,V) \left[d(x,K) - d(y,K)\right] + d(y,K)\left[d(y,V) - d(x,V)\right]}{g(x) g(y)}}\\
			& \leq \frac{d(y,V)}{g(y)} \frac{d(x,y)}{\alpha} + \frac{d(y,K)}{g(y)} \frac{d(x,y)}{\alpha}\\
			& = \frac{d(x,y)}{\alpha} \quad \forall x,y \in \R^n\,.
		\end{align*} 
	\end{proof}
	\begin{corollary}
		\label{petitCor}
		In the previous result, one can also choose $f$ such that $f = 1$ in a neighborhood of $K$ and $0$ in a neighborhood of $V$.
	\end{corollary}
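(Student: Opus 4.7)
The plan is to reduce the corollary to the preceding lemma by applying that lemma to slightly fattened closed neighborhoods of $K$ and $V$, and then replacing the resulting function by its complement $1 - g$ to swap the roles of the two sets (since the corollary demands $f = 1$ near $K$ and $f = 0$ near $V$, opposite to the lemma).

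Concretely, I would set $\varepsilon \isdef d(K,V)/3 > 0$ and introduce the closed $\varepsilon$-thickenings
\[\widetilde K \isdef \enstq{x \in \R^n}{d(x,K) \leq \varepsilon}\,, \qquad \widetilde V \isdef \enstq{x \in \R^n}{d(x,V) \leq \varepsilon}\,.\]
The key observations are that $\widetilde K$ remains compact, being closed and bounded (boundedness because $K$ itself is bounded), and that the triangle inequality yields $d(\widetilde K, \widetilde V) \geq d(K,V) - 2\varepsilon = \varepsilon > 0$. Hence the previous lemma applies to the pair $(\widetilde K, \widetilde V)$ and produces $g \in W^{1,\infty}(\R^n)$ with $g = 0$ on $\widetilde K$ and $g = 1$ on $\widetilde V$.

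Setting $f \isdef 1 - g$, we obtain $f \in W^{1,\infty}(\R^n)$ with $f = 1$ on $\widetilde K$ and $f = 0$ on $\widetilde V$. Since $\widetilde K$ contains the open set $\enstq{x \in \R^n}{d(x,K) < \varepsilon}$, which is a neighborhood of $K$, and $\widetilde V$ contains the analogous open neighborhood of $V$, the required strengthening of the lemma follows. There is no real obstacle in this argument; the only point to check is that the thickening radius $\varepsilon$ is small enough to keep the fattened sets at positive distance, which is automatic for any $\varepsilon < d(K,V)/2$.
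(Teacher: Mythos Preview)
Your argument is correct. The fattening by $\varepsilon = d(K,V)/3$ is the right idea: $\widetilde K$ is closed and bounded hence compact, the triangle inequality gives $d(\widetilde K,\widetilde V) \geq d(K,V) - 2\varepsilon = \varepsilon > 0$, and the final swap $f \isdef 1 - g$ delivers the values in the orientation the corollary asks for.

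The paper takes a different but equally elementary route. Rather than pre-processing the \emph{sets} by thickening, it post-processes the \emph{function}: it applies the lemma directly to the original pair $(K,V)$, obtaining a Lipschitz $f$ with $f=0$ on $K$ and $f=1$ on $V$, and then composes with a piecewise-affine map that clamps values below $1-\eta$ to $0$ and above $\eta$ to $1$ (after rescaling). Continuity of $f$ then guarantees that the preimages $f^{-1}((-\infty,1-\eta))$ and $f^{-1}((\eta,\infty))$ are open neighborhoods of $K$ and $V$ on which the new function is constant. Your approach uses the lemma as a pure black box and is arguably cleaner; the paper's approach avoids verifying compactness of the thickened set and the distance estimate, at the price of writing down an explicit truncation. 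Both yield $W^{1,\infty}$ functions and either is perfectly adequate here.
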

	\begin{proof}
		Take $f$ as before. Note that $K \subset f^{-1}({0})$ and $V \subset f^{-1}({1})$. Therefore, for any neighborhoods $U_0$ and $U_1$ of $0$ and $1$ respectively in $\R$, $f^{-1}(U_0)$ and $f^{-1}(U_1)$ are neighborhoods of $K$ and $V$ respectively in $\R^n$. Based on this idea, for some $\eta \in (\frac{1}{2},1)$, let $f_\eta$ be defined by 
		\[f_\eta(x) = \begin{cases}
			\eta & \textup{if } f(x) \geq \eta\\
			f(x) & \textup{if } 1 - \eta \leq f \leq \eta\\
			1 - \eta & \textup{if } f(x) \leq 1 - \eta
		\end{cases}\]
		Set $g = \frac{f_\eta - (1 - \eta)}{2\eta - 1}$. Then $g$ is equal to $0$ in the set $f^{-1}(]-\infty,1-\eta[)$ and $g = 1$ on $f^{-1}(]\eta,+\infty[)$, so $g$ satisfies the required property. 
	\end{proof}
	
	\begin{lemma}
		\label{decomp}
		Let $u \in H^1(\R^d \setminus \Gamma)$ be such that $u$ vanishes in the neighborhood of $\cup_{j}\partial \Gamma_j$. Then 
		\[u = u_1 + u_2\]
		where $u_1 \in H^1(\R^d \setminus \Gamma)$ vanishes in a neighborhood of $\overline{\Sigma}_j$ for all $j$ and $u_2$ is the restriction to $\R^d \setminus \Gamma$ of a function $U_2 \in H^1(\R^d)$. 
	\end{lemma}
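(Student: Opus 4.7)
The plan is to use a Lipschitz cutoff function that separates the two ``forbidden'' sets. Let $W$ be the open neighborhood of $\cup_j\partial\Gamma_j$ on which $u$ vanishes, and set $K \isdef \cup_j \overline{\Sigma_j}$. Since every $\partial\Omega_j$ is bounded (including $\partial\Omega_0 = \partial\Omega$), the set $K$ is compact, and $\Gamma$ is compact as well. The first step is to verify the crucial separation property
\[ (K \setminus W) \cap (\Gamma \setminus W) = \emptyset. \]
This follows because $\overline{\Sigma_j} \subset \partial\Omega_j$, hence $\overline{\Sigma_j} \cap \Gamma \subset \Gamma \cap \partial\Omega_j = \Gamma_j$; combined with $\overline{\Sigma_j} \cap \Gamma_j \subset \partial\Gamma_j$ (since $\Sigma_j$ and $\Gamma_j$ are complementary screens on $\partial\Omega_j$ sharing the boundary $\partial\Gamma_j$), we obtain $K \cap \Gamma \subset \cup_j \partial\Gamma_j \subset W$, which gives the disjointness after removing $W$.

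Since $K\setminus W$ and $\Gamma\setminus W$ are disjoint compact subsets of $\R^d$, their distance is strictly positive. I can therefore invoke \Cref{petitCor} to produce a function $\chi \in W^{1,\infty}(\R^d)$ such that $\chi \equiv 0$ in an open neighborhood $U_\Gamma$ of $\Gamma\setminus W$ and $\chi \equiv 1$ in an open neighborhood $U_K$ of $K\setminus W$. The decomposition is then simply
\[ u_1 \isdef (1-\chi)\, u, \qquad u_2 \isdef \chi\, u, \]
both of which lie in $H^1(\R^d\setminus\Gamma)$ because multiplication by a Lipschitz function preserves this space.

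It remains to check the two required properties. For $u_1$: on $U_K$ one has $\chi=1$, so $u_1=0$; on $W$ one has $u=0$, so $u_1=0$. Hence $u_1$ vanishes on the open set $U_K \cup W$, which contains $K = \cup_j \overline{\Sigma_j}$. For $u_2$: on $U_\Gamma$ one has $\chi=0$, so $u_2=0$; on $W$ one has $u=0$, so $u_2=0$. Therefore $u_2$ vanishes on the open set $U_\Gamma \cup W$, which contains all of $\Gamma$. Since $u_2$ is zero in an open neighborhood of the (measure-zero) set $\Gamma$, extending it by zero across $\Gamma$ yields a function $U_2 \in H^1(\R^d)$ whose restriction to $\R^d \setminus \Gamma$ equals $u_2$, concluding the proof.

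The only delicate point is the geometric observation $K \cap \Gamma \subset \cup_j \partial\Gamma_j$, which is what makes the cutoff construction available; once this is in hand, everything reduces to an essentially standard partition-of-unity argument.
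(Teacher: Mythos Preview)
Your proof is correct and follows essentially the same approach as the paper's: build a Lipschitz cutoff (via \Cref{petitCor}) separating $\Gamma$ from $\cup_j\overline{\Sigma_j}$ outside the neighborhood where $u$ already vanishes, then split $u$ as $(1-\chi)u + \chi u$. The only cosmetic difference is that the paper removes the neighborhood only from $\Gamma$ (taking $K=\Gamma\setminus U$ compact and $V=\cup_j\overline{\Sigma_j}$ merely closed), while you remove it from both sets; and you make the geometric fact $\big(\cup_j\overline{\Sigma_j}\big)\cap\Gamma\subset\cup_j\partial\Gamma_j$ explicit, which the paper uses without spelling out.
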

	\begin{proof}
		Let $U$ be an open neighborhood of $\cup_{j = 1}^J \partial \Gamma_j$ in which $u$ vanishes. Define the compact set $K = \Gamma \setminus U$, and let $V = \cup_{j} \overline{\Sigma_j}$. Note that $V$ is closed and disjoint from $K$, so since $K$ is compact, it is at a positive distance of $K$. Thus we can apply \autoref{petitCor} to fix a function $\chi \in W^{1,\infty}(\R^d)$ such that $\chi = 0$ in a neighborhood of $K$ and $\chi = 1$ in a neighborhood of $V$. Then we define $u_1 = (1 - \chi) u$ and $u_2 = \chi u$. By the chain rule, it is immediate that $u_1$ and $u_2$ are in $H^1(\R^d\setminus \Gamma)$ since $\chi \in W^{1, \infty}(\R^d \setminus \Gamma)$. Clearly, $u_1$ satisfies the required property by definition of $\chi$. It remains to show that $u_2$ can be extended to a $H^1$ function on $\R^d$. For this, we set $U_2(x) = 0$ for $x \in \Gamma$ and $U_2(x) = u_2(x)$ for $x \in \R^d \setminus \Gamma$. Let $U_K$ be a neighborhood of $K$ on which $\chi$ vanishes. Then $U_K \cup U$ is a neighborhood of $\Gamma$ where $U_2$ vanishes. Thus $U_2$ is in $H^1(\R^d)$ since
		\begin{align*}
			\int_{\R^d} |U_2(x)|^2 + |\nabla U_2(x)|^2 &= \int_{\R^d \setminus (U_K \cup U)}|U_2(x)|^2 + |\nabla U_2(x)|^2 dx\\
			& = \int_{\R^d \setminus (U_k \cup U)}|u(x)|^2 + |\nabla u(x)|^2dx
			 \leq \norm{u}^2_{H^1(\R^d \setminus \Gamma)}\,.
		\end{align*}
	\end{proof}
	
	\begin{lemma}
		\label{lemtmpDensityXinfty}
		Let $\Omega$ be a Lipschitz domain and let $\Sigma \subset \partial \Omega$ be a Lipschitz screen. If $u \in H^1(\Omega)$ vanishes in a neighborhood of $\Sigma$, then for all $\varepsilon > 0$ there exists $\varphi_\varepsilon \in C^\infty(\overline{\Omega}) \cap H^1(\Omega)$ such that $\varphi = 0$ in a neighborhood of $\Sigma$ and 
		\[\norm{u - \varphi_\varepsilon}_{H^1(\Omega)} \leq \varepsilon\,.\]
	\end{lemma}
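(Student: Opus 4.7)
The plan is to reduce the problem to the standard density of $C^\infty(\overline{\Omega}) \cap H^1(\Omega)$ in $H^1(\Omega)$ (valid for Lipschitz $\Omega$) by multiplying the approximants by a suitable smooth cutoff that neutralises them near $\Sigma$. The hypothesis that $u$ already vanishes in a neighbourhood of $\Sigma$ provides the slack needed to insert such a cutoff without altering $u$.

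First, I would use the hypothesis to fix an open neighbourhood $U$ of $\Sigma$ in $\R^d$ with $u \equiv 0$ on $U \cap \Omega$. Since $\Sigma$ is compact, I can choose a slightly smaller open neighbourhood $U'$ of $\Sigma$ with $\overline{U'} \subset U$, and then build a smooth cutoff $\chi \in C^\infty(\R^d)$ satisfying $\chi \equiv 0$ on $U'$, $\chi \equiv 1$ on $\R^d \setminus U$, and $0 \le \chi \le 1$ (this is a standard Urysohn-type construction obtained, e.g., by mollifying the characteristic function of $\R^d \setminus V$ for an intermediate neighbourhood $V$ of $\Sigma$). In particular $\chi \in W^{1,\infty}(\R^d)$ with bounded derivatives. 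By construction, $u = \chi u$ almost everywhere in $\Omega$, because the two functions agree outside $U$ and both vanish on $U \cap \Omega$.

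Next, I would invoke the classical density result that $C^\infty(\overline{\Omega}) \cap H^1(\Omega)$ is dense in $H^1(\Omega)$ whenever $\Omega$ is a Lipschitz domain, to obtain a sequence $(u_n) \subset C^\infty(\overline{\Omega}) \cap H^1(\Omega)$ with $u_n \to u$ in $H^1(\Omega)$. I then set
\[
\varphi_\varepsilon \isdef \chi\, u_n
\]
for $n$ sufficiently large. Since $\chi$ is smooth on $\R^d$ and $u_n$ is smooth on $\overline{\Omega}$, their product lies in $C^\infty(\overline{\Omega}) \cap H^1(\Omega)$; moreover $\varphi_\varepsilon \equiv 0$ on $U' \cap \Omega$, which is a neighbourhood of $\Sigma$ in $\Omega$.

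It remains to bound $\norm{u - \varphi_\varepsilon}_{H^1(\Omega)}$. Using $u = \chi u$, I write
\[
u - \varphi_\varepsilon \;=\; \chi(u - u_n),
\]
and since $\chi \in W^{1,\infty}(\R^d)$ with, say, $\norm{\chi}_{W^{1,\infty}} \le M$, the product rule yields
\[
\norm{u - \varphi_\varepsilon}_{H^1(\Omega)} \;\le\; C\, M\, \norm{u - u_n}_{H^1(\Omega)}
\]
for some absolute constant $C$. Choosing $n$ so that the right-hand side is at most $\varepsilon$ concludes the proof. There is no real obstacle here; the only subtlety is making sure the cutoff $\chi$ is genuinely smooth (not merely Lipschitz as in \Cref{petitCor}), so that $\chi u_n$ remains in $C^\infty(\overline{\Omega})$, and this is handled by a standard mollification of the Urysohn-type construction around the compact set $\Sigma$.
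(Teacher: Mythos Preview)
Your proof is correct and follows essentially the same route as the paper's own argument: both fix a smooth cutoff $\chi$ that vanishes near $\Sigma$ and equals $1$ outside the neighbourhood where $u$ already vanishes, approximate $u$ in $H^1(\Omega)$ by some $\varphi \in C^\infty(\overline{\Omega})$, and then use $u = \chi u$ to estimate $\norm{u - \chi\varphi}_{H^1(\Omega)} = \norm{\chi(u - \varphi)}_{H^1(\Omega)} \leq M\norm{u - \varphi}_{H^1(\Omega)}$. Your remark that $\chi$ must be genuinely smooth (not merely Lipschitz) so that $\chi\varphi \in C^\infty(\overline{\Omega})$ is apt and matches the paper's choice.
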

	\begin{proof}
		First let $U$ be a neighborhood of $\Sigma$ where $u = 0$. Introduce a smooth cutoff function $\chi$ which is identically $1$ outside $U$ and vanishes in a smaller neighborhood $V$ of $\Sigma$. This is possible since $\Sigma$ is closed \cite[Cor 16.4]{treves2006topological}. Let $M = \norm{\chi}_{W^{1,\infty}(\Omega)}$. Fix $\varepsilon > 0$. Since $C^\infty(\overline{\Omega}) \cap H^1(\Omega)$ is dense in $H^1(\Omega)$ (because $\Omega$ is a Lipschitz domain, see \cite[Thm. 3.29]{McLean}), we can find $\varphi \in C^\infty(\overline{\Omega})$ such that $\norm{u - \varphi}_{H^1(\Omega)} \leq \frac{\varepsilon}{M}$. Then $\chi \varphi$ vanishes in a neighborhood of $\Sigma$ and 
		\[\norm{\chi \varphi - u}_{H^1(\Omega)} = \norm{\chi\varphi - \chi u} \leq M \norm{\varphi - u} \leq \varepsilon\]
		concluding the proof. 
	\end{proof}

	\begin{proof}[Proof of \Cref{thm:densityXinfty}]
		By \Cref{prop:density2013}, we can first assume that $u$ vanishes in a neighborhood of $\cup_{j = 1}^J\partial \Gamma_j$ and therefore represent it as $u_1 + u_2$ as in \Cref{decomp}. Fix $\varepsilon > 0$. By density of $C^\infty_c(\R^d)$ in $H^1(\R^d)$ (\cite[Lemma 3.24]{McLean}), there is $\varphi_2 \in C^\infty_c(\R^d)$ such that $\norm{u_2 - \varphi_2}_{H^1(\R^d)} \leq \varepsilon$. On the other hand, by \Cref{lemtmpDensityXinfty} for each $j$, there exists $\psi_j \in C^\infty(\overline{\Omega_j}) \cap H^1(\Omega_j)$ such that $\psi_j$ vanishes in a neighborhood of $\overline{\Sigma}_j$ and 
		\[\norm{u_{1|\Omega_j} - \psi_j}_{H^1(\Omega_j)} \leq \varepsilon\,.\] 
		Let $\psi$ be defined by 
		\[\psi(x) = \begin{cases}
			\psi_j(x) & \textup{if } x \in \Omega_j\,,\\
			0 & \textup{if } x \cup_{j} \Sigma_j\,.
		\end{cases}\]
		We claim that $\psi \in X^\infty$. Indeed, $\psi$ is $C^\infty$ at any $x \in \Omega_j$, and if $x \in \Sigma_j$ for some $j$, then $\psi$ is identically $0$ in a neighborhood of $x$. We furthermore have 
		\[\norm{u_1 - \psi}_{H^1(\R^d \setminus \Gamma)} \leq J\varepsilon\,.\]
		In conclusion, letting $\varphi = \varphi_1 + \varphi_2$, we can write
		\[\norm{u - \varphi}_{H^1(\R^d\setminus \Gamma)} \leq (J+1)\varepsilon\,,\]
		concluding the proof.
	\end{proof}

	\section{Properties of the Hypersingular bilinear form}
	
	\begin{lemma}[Poincaré-type inequality]
		\label{lem:poincaré}
		Let $\Gamma$ be a polygonal multiscreen such that $\R^3 \setminus \Gamma$ is connected and let $\Omega_0,\ldots,\Omega_j$ be as in \Cref{def:polygonalMS}. There exists a positive constant $C = C(\Gamma,\Omega_0,\ldots,\Omega_j) > 0$ such that
		\[\norm{u}^2_{H^1(\R^3 \setminus \Gamma)} \leq C \Big(\norm{u}^2_{H^1(\Omega_0)} + \sum_{j = 1}^J \norm{\nabla u}^2_{L^2(\Omega_j)}\Big) \qquad \forall u \in H^1(\R^3 \setminus \Gamma)\,.\]
	\end{lemma}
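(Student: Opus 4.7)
The plan is to argue by contradiction using a compactness argument in the spirit of the Peetre--Tartar lemma. Suppose the inequality fails: then there exists a sequence $(u_n) \subset H^1(\R^3 \setminus \Gamma)$ with
\[
\norm{u_n}_{H^1(\R^3 \setminus \Gamma)} = 1 \quad \text{and} \quad \norm{u_n}^2_{H^1(\Omega_0)} + \sum_{j=1}^J \norm{\nabla u_n}^2_{L^2(\Omega_j)} \xrightarrow[n\to\infty]{} 0.
\]
The goal will be to derive $\norm{u_n}_{H^1(\R^3 \setminus \Gamma)} \to 0$, a contradiction.

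\textbf{Step 1: Extraction of subsequences.} On $\Omega_0$ we already have $u_n \to 0$ strongly in $H^1(\Omega_0)$. For each $j \in \{1,\ldots,J\}$, the set $\Omega_j$ is a bounded Lipschitz domain (in fact a finite union of tetrahedra with manifold geometry), and $(u_n)$ is bounded in $H^1(\Omega_j)$. By the Rellich--Kondrachov theorem, I would extract a (diagonal) subsequence such that $u_n \rightharpoonup u^{(j)}$ weakly in $H^1(\Omega_j)$ and $u_n \to u^{(j)}$ strongly in $L^2(\Omega_j)$ for every $j \geq 1$. Since $\nabla u_n \to 0$ strongly in $L^2(\Omega_j)$, the weak limit satisfies $\nabla u^{(j)} = 0$, so $u^{(j)}$ is constant on every connected component of $\Omega_j$. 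Write $c_j$ for these constants (I treat $\Omega_j$ as connected for brevity; otherwise the argument is run component-wise). Moreover $c_0 = 0$ from the strong convergence on $\Omega_0$.

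\textbf{Step 2: Matching constants across interfaces.} For any pair $(j,j')$ such that $\Sigma_{j,j'} := \partial \Omega_j \cap \partial \Omega_{j'}$ contains a relatively open piece $\Sigma \subset \Sigma_{j,j'} \setminus \Gamma$ of positive two-dimensional measure, I choose a small open neighborhood $U$ of an interior point of $\Sigma$ such that $U \subset \Omega_j \cup \Sigma \cup \Omega_{j'} \subset \R^3 \setminus \Gamma$ and $U$ is Lipschitz. Then $u_{n|U} \in H^1(U)$, uniformly bounded, so a subsequence converges weakly in $H^1(U)$. By continuity of the trace operator from $H^1(U)$ to $L^2(\partial U \cap \Sigma)$ (combined with weak lower-semicontinuity), the traces of the weak limit from both sides of $\Sigma$ coincide. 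Those traces are respectively $c_j$ and $c_{j'}$, so $c_j = c_{j'}$.

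\textbf{Step 3: Propagating the constants to zero.} Using the connectivity of $\R^3 \setminus \Gamma$, I would show that the graph with vertices $\{0,1,\ldots,J\}$ and edges given by the existence of a positive-measure interface in $\Sigma_{j,j'} \setminus \Gamma$ is connected. Indeed, if this graph split into two classes $I_1 \ni 0$ and $I_2$, then any continuous path in $\R^3 \setminus \Gamma$ from a point of $\Omega_0$ into $\Omega_j$ with $j \in I_2$ would have to cross a $2$-dimensional interface contained in $\Gamma$ (since any interface outside of $\Gamma$ connecting the two classes has zero area, hence lower-dimensional boundary intersections with $\Gamma$ would disconnect the path), contradicting $\gamma([0,1]) \subset \R^3 \setminus \Gamma$. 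Combining connectedness of this graph with Step 2 yields $c_j = c_0 = 0$ for all $j$.

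\textbf{Step 4: Contradiction.} Now $u_n \to 0$ strongly in $L^2(\Omega_j)$ for every $j$, and $\nabla u_n \to 0$ strongly in $L^2(\Omega_j)$ by assumption, so $\norm{u_n}_{H^1(\Omega_j)} \to 0$ for every $j \in \{0,\ldots,J\}$, hence $\norm{u_n}_{H^1(\R^3 \setminus \Gamma)} \to 0$, contradicting $\norm{u_n}_{H^1(\R^3 \setminus \Gamma)} = 1$.

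\textbf{Main obstacle.} The most delicate point is the topological argument in Step~3: connectivity of $\R^3 \setminus \Gamma$ is a statement about continuous paths, whereas what I need is connectivity through positive-measure smooth interfaces. The argument goes through because $\Gamma$ is polygonal: any continuous path between two subdomains can, by a small perturbation avoiding the lower-dimensional skeleton of $\Gamma$ and the edges of the Lipschitz partition, be made to cross interfaces transversally at interior points, realizing each crossing as a ``positive-measure interface not in $\Gamma$'' in the required sense. Step~2 is then essentially standard once an appropriate Lipschitz straddling neighborhood of such a crossing point is constructed.
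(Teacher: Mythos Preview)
Your argument is correct, but the paper's proof is shorter because it avoids Steps~2 and~3 entirely. The key observation you miss is that one can take the weak limit \emph{globally} in $H^1(\R^3\setminus\Gamma)$ rather than subdomain by subdomain. Since the normalized sequence is bounded in $H^1(\R^3\setminus\Gamma)$, a subsequence converges weakly there to some $u_\infty\in H^1(\R^3\setminus\Gamma)$; Rellich on each bounded $\Omega_j$ upgrades this to strong $L^2(\Omega_j)$ convergence, and the hypotheses force $\nabla u_\infty=0$ in $L^2(\R^3\setminus\Gamma)$. But a function in $H^1$ of a \emph{connected} open set with vanishing weak gradient is constant, so $u_\infty$ is constant on all of $\R^3\setminus\Gamma$, and vanishing on $\Omega_0$ forces $u_\infty=0$. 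The connectivity of $\R^3\setminus\Gamma$ is thus used directly, with no need to build an interface graph or match traces.

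Your route through trace-matching and a graph-connectivity argument is a valid alternative and would be the natural approach if one only knew $u_n\in \prod_j H^1(\Omega_j)$ rather than $u_n\in H^1(\R^3\setminus\Gamma)$. What it buys is robustness in settings where the ambient ``cracked'' Sobolev space is not available; what it costs is the topological Step~3, which---as you note---requires a perturbation argument specific to the polygonal structure. Since here the global space is given from the outset, the paper's direct argument is preferable.
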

	\begin{proof}
		It suffices to show that there exists $C > 0$ such that 
		\[\sum_{j = 1}^J \norm{u}^2_{L^2(\Omega_j)} \leq C \Big(\norm{u}^2_{H^1(\Omega_0)} + \sum_{j = 1}^N \norm{\nabla u}^2_{L^2(\Omega_j)}\Big)\,.\]
		Assuming that it is not true, one may construct a sequence $(u_n)_{n \in \N}$ of functions in $H^1(\R^3 \setminus \Gamma)$ such that 
		\begin{equation}
			\label{eq:contra1}
			\sum_{j = 1}^J \norm{u_n}^2_{L^2(\Omega_j)} = 1 \qquad \forall n \in \N
		\end{equation}
		\begin{equation}
			\label{eq:contra2}
			\lim_{n \to 0} \Big(\norm{u_n}^2_{H^1(\Omega_0)} + \sum_{j = 1}^N \norm{\nabla u_n}^2_{L^2(\Omega_j)}\Big) = 0\,.
		\end{equation}
		Extracting a subsequence, we can assume that $u_n$ converges weakly in $H^1(\R^3 \setminus \Gamma)$ to some $u_\infty \in H^1(\R^3 \setminus \Gamma)$. By eq.~\eqref{eq:contra2}, $u_\infty = 0$ on $\Omega_0$. 
		Moreover, the conditions in eqs.~\eqref{eq:contra1} and \eqref{eq:contra2} imply that the sequences $(\norm{u_n}_{H^1(\Omega_j)})_{n\in \N}$ are bounded bounded for $j = 1,\ldots,J$. Using the compact embedding $H^1(\Omega_j) \subset\subset L^2(\Omega)$ (since $\Omega_j$ is bounded for $j = 1,\ldots,J$) and extracting a new subsequence, one can further assume that 
		\begin{equation}
			\label{eq:strongConvL2}
			\lim_{n \to \infty}\norm{u_n - u_\infty}_{L^2(\Omega_j)} = 0\,.
		\end{equation}
		We now show that $u_\infty$ is locally constant by computing the quantity $\ell:=\lim_{n \to \infty}\int_{\R^3 \setminus \Gamma} u_n u_\infty + \nabla u_n \cdot \nabla u_\infty$ in two different ways. On the one hand, by weak convergence, $\ell = \norm{u_\infty}^2_{H^1(\R^3 \setminus \Gamma)}$. On the other hand, using eqs.~\eqref{eq:contra2} and \eqref{eq:strongConvL2},
		$\ell = \norm{u_\infty}^2_{L^2(\R^3 \setminus \Gamma)}$. Thus, 
		$$\norm{\nabla u_\infty}^2_{L^2(\R^3 \setminus \Gamma)} = \norm{u}^2_{H^1(\R^3 \setminus \Gamma)} - \norm{u}^2_{L^2(\R^3 \setminus \Gamma)} = \ell - \ell = 0\,,$$ i.e., $u_\infty$ is locally constant. Since $\R^3 \setminus \Gamma$ is connected, it follows that $u_\infty = 0$, which contradicts eqs.~\eqref{eq:contra1} and \eqref{eq:strongConvL2}.
	\end{proof}
	\label{sec:proofPropertiesW}
	\begin{lemma}
		\label{idDLDL}
		For all $u,v \in \mathbb{H}^{1/2}([\Gamma])$, one has the identity
		\begin{equation}
			\dduality{u}{\W v} = \sum_{j = 0}^{J} \int_{\Omega_j} \nabla \DL u \cdot \nabla \DL v\,dx\,.
		\end{equation}
	\end{lemma}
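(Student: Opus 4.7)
The plan is to compute $\dduality{u}{\W v}$ by substituting into the integral definition of the duality pairing the natural representatives $f \isdef \DL u$ for $u$ and $\vec p \isdef \nabla \DL v$ for $\W v$, and then use the harmonicity of the double-layer potential in each $\Omega_j$ to kill the lower-order term.

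First I would check that these are admissible representatives. By \Cref{def:Hypersingular} and the harmonicity of $\DL v$ on every $\Omega_j$, one has $\nabla \DL v \in H(\div, \R^3 \setminus \Gamma)$ with $\pi_n(\nabla \DL v) = \W v$, so $\vec p$ is a legitimate representative. For $\DL u$, local $H^1$-regularity in $\R^3 \setminus \Gamma$ is immediate from $\DL u \in H^1_{\rm loc}(\Delta, \R^3 \setminus \Gamma)$; global integrability on the unbounded component $\Omega_0$ follows from the dipole-type decay $\DL u = O(\norm{x}^{-2})$ and $\nabla \DL u = O(\norm{x}^{-3})$ at infinity, which can be read off from the explicit kernel in \Cref{lemrepDL} via a first-order Taylor expansion of $1/\norm{x - y}$ as $\norm{x} \to \infty$.

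Next I would account for the fact that $\gamma(\DL u)$ need not equal $u$ in $\mathbb{H}^{1/2}(\Gamma)$. By the jump relation \eqref{jumpRel}, $[\gamma(\DL u) - u]_\Gamma = 0$, i.e., $\gamma(\DL u) - u \in H^{1/2}([\Gamma])$. Since $\W v \in H^{-1/2}([\Gamma])$ by \Cref{def:Hypersingular}, the polarity relation \eqref{eq:polarity1} yields $\dduality{\gamma(\DL u) - u}{\W v} = 0$, hence
\[\dduality{u}{\W v} = \dduality{\gamma(\DL u)}{\pi_n(\nabla \DL v)}.\]
Unfolding the integral definition of $\dduality{\cdot}{\cdot}$ with $f = \DL u$ and $\vec p = \nabla \DL v$ gives
\[\dduality{u}{\W v} = \sum_{j=0}^{J} \int_{\Omega_j} \nabla \DL u \cdot \nabla \DL v + \DL u \cdot \Delta(\DL v)\,dx,\]
and the second integrand vanishes pointwise on every $\Omega_j$ because $\DL v$ is harmonic on $\R^3 \setminus \Gamma$, yielding the claim.

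The hard part will be the justification of the decay of $\DL u$ at infinity, hence of $\DL u \in H^1(\Omega_0)$: only the weak $O(\norm{x}^{-1})$ decay is stated in general for the exterior BVP, so the sharper dipole rate must be extracted carefully from the kernel representation in \Cref{lemrepDL}, exploiting the cancellation built into the integrand $\vec n_{\vec t}\cdot(y - x)/\norm{x-y}^3$ as $\norm{x}\to\infty$. Once this is granted, the rest is a short formal calculation combining the definition of $\dduality{\cdot}{\cdot}$, the polarity of single-trace spaces, and harmonicity.
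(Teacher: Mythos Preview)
Your argument is correct and follows the same route as the paper: use the jump relation \eqref{jumpRel} together with $\W v \in H^{-1/2}([\Gamma])$ and the polarity \eqref{eq:polarity1} to replace $u$ by $\gamma(\DL u)$, then expand the definition of $\dduality{\cdot}{\cdot}$ with $f = \DL u$, $\vec p = \nabla \DL v$, and drop the $\Delta(\DL v)$ term by harmonicity. The only difference is that you spend effort justifying $\DL u \in H^1(\Omega_0)$, which the paper's proof passes over; note, however, that no subtle cancellation is needed here---each term in the kernel representation of \Cref{lemrepDL} already carries the dipole decay $\vec n_{\vec t}\cdot(y-x)/\norm{x-y}^3 = O(\norm{x}^{-2})$, so the $O(\norm{x}^{-2})$ bound on $\DL u$ (and $O(\norm{x}^{-3})$ on its gradient) is immediate.
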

	\begin{proof}
		We first notice that 
		\[\dduality{\gamma(\DL u)}{\W v} = \dduality{u}{\W v}\]
		due to the jump relation \eqref{jumpRel}, the polarity of the single trace spaces $H^{\pm 1/2}([\Gamma])$, and the fact that $\W v \in H^{-1/2}([\Gamma])$. Moreover, by definition 
		\[\dduality{\gamma(\DL u)}{\W v} = \sum_{j = 0}^{J} \int_{\Omega_j} \nabla \DL u(x) \cdot \nabla \DL v(x) + \DL u(x) \Delta(\DL v)(x)\,dx\,.\]
		The second term vanishes since $\DL u$ is harmonic in $\R^3\setminus \Gamma$, proving the result. 
	\end{proof}
	
	\begin{theorem}
		\label{thmWpos}
		There exists a constant $c_W > 0$ such that
		\begin{equation}
			\forall u \in \mathbb{H}^{1/2}(\Gamma)\,, \quad \dduality{u}{\W u} \geq c_W\norm{[u]_\Gamma}^2_{\widetilde{H}^{1/2}([\Gamma])}
		\end{equation}
	\end{theorem}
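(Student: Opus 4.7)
The plan is to combine the energy identity of \Cref{idDLDL} with a compactly supported representative of $[u]_\Gamma$ derived from the double-layer potential, the Poincaré-type bound of \Cref{lem:poincaré}, and a compactness argument to absorb a lower-order remainder.

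\emph{Setup and representative.} From \Cref{idDLDL}, $\dduality{u}{\W u} = \norm{\nabla \DL u}^2_{L^2(\R^3\setminus\Gamma)}$, so $\nabla \DL u \in L^2(\R^3 \setminus \Gamma)$. I fix once and for all a cutoff $\chi \in C^\infty_c(\Omega)$ with $\chi \equiv 1$ on an open neighborhood of $\Gamma$, and set $V \isdef \chi \DL u$. Since $\DL u \in H^1_{\rm loc}(\R^3 \setminus \Gamma)$ and $\chi$ is compactly supported in $\Omega$, we have $V \in H^1(\R^3\setminus\Gamma)$ with $V \equiv 0$ on $\Omega_0$. The jump relation \eqref{jumpRel} together with $\chi \equiv 1$ near $\Gamma$ yields $[V]_\Gamma = [\DL u]_\Gamma = [u]_\Gamma$, so $V$ is a representative of $[u]_\Gamma$ and by the quotient norm definition
\[\norm{[u]_\Gamma}^2_{\widetilde{H}^{1/2}([\Gamma])} \leq \norm{V}^2_{H^1(\R^3\setminus\Gamma)}.\]

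\emph{Intermediate estimate.} Because $V$ vanishes on $\Omega_0$, \Cref{lem:poincaré} reduces to $\norm{V}^2_{H^1(\R^3\setminus\Gamma)} \leq C \sum_{j=1}^J \norm{\nabla V}^2_{L^2(\Omega_j)}$, and the product rule $\nabla V = \chi \nabla \DL u + \DL u\, \nabla \chi$ gives
\[\norm{V}^2_{H^1(\R^3\setminus\Gamma)} \leq C'\Bigl(\norm{\nabla \DL u}^2_{L^2(\R^3\setminus\Gamma)} + \norm{\DL u}^2_{L^2(K)}\Bigr),\]
where $K \isdef \textup{supp}(\nabla \chi)$ is a compact subset of $\Omega$ disjoint from $\Gamma$. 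Setting $\varphi = [u]_\Gamma$ (so that $\DL u$ depends only on $\varphi$ by \eqref{kerDL}), the estimate becomes
\[\norm{\varphi}^2_{\widetilde{H}^{1/2}([\Gamma])} \leq C_1 \Bigl(a(\varphi,\varphi) + \norm{\DL \varphi}^2_{L^2(K)}\Bigr).\]

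\emph{Absorbing the remainder.} The main obstacle is to discard the lower-order term $\norm{\DL \varphi}^2_{L^2(K)}$; the plan is a classical compactness argument. The operator $T : \widetilde{H}^{1/2}([\Gamma]) \to L^2(K)$ defined by $T\varphi := \DL \varphi|_K$ is compact, since $K$ lies at positive distance from $\Gamma$ and interior regularity for harmonic functions yields $\norm{\DL\varphi}_{C^1(K)} \leq C \norm{\varphi}_{\widetilde{H}^{1/2}([\Gamma])}$, while $C^1(K) \hookrightarrow L^2(K)$ compactly by Arzelà--Ascoli and Rellich. The map $A : \varphi \mapsto \nabla \DL \varphi \in L^2(\R^3\setminus\Gamma)^3$ is continuous and injective: if $\nabla \DL \varphi \equiv 0$, then $\DL\varphi$ is constant on the connected set $\R^3\setminus\Gamma$ and so vanishes identically by its decay at infinity, which forces $\varphi = [\DL\varphi]_\Gamma = 0$ via \eqref{jumpRel}. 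A standard contradiction argument combining the intermediate estimate, compactness of $T$ and injectivity of $A$ then upgrades the inequality to $\norm{\varphi}^2_{\widetilde{H}^{1/2}([\Gamma])} \leq C_2\, a(\varphi,\varphi)$, yielding the stated bound with $c_W = 1/C_2$.
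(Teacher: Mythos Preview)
Your proof is correct, but the route differs from the paper's. Both arguments start from the energy identity of \Cref{idDLDL} and pick a cut-off representative $\chi\DL u$ of $[u]_\Gamma$, then invoke \Cref{lem:poincaré}. The divergence is in how the remaining $L^2$ term is handled. The paper takes $\chi\equiv 1$ on all of $\Omega$, so the left-over contribution lives on the exterior domain $\Omega_0$; it is then controlled by the Poincar\'e inequality in the Beppo--Levi space \cite[Thm.~2.10.10]{sauter2011boundary}, namely $\int_{\Omega_0}\abs{\DL u}^2/(1+\norm{x}^2)\,dx\le C\int_{\Omega_0}\abs{\nabla\DL u}^2\,dx$, which immediately closes the estimate. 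You instead localize $\chi$ inside $\Omega$, so the remainder lives on a compact set $K$ away from $\Gamma$, and you remove it by a Peetre-type compactness argument (compactness of $\varphi\mapsto \DL\varphi|_K$ plus injectivity of $\varphi\mapsto\nabla\DL\varphi$ via the jump relation and decay). The paper's route is more direct and yields an explicit constant at the price of importing the Hardy-type inequality; your route is self-contained and uses only soft tools, but the constant $c_W$ is non-constructive.
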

	\begin{proof}
		Let $\chi \in C^\infty_c(\R^3)$ be a compactly supported
		function such that $\chi \equiv 1$ in a neighborhood of
		$\Omega$. Because the support of $\chi$ is bounded, we have
		$C_{\chi} = \sup_{x\in\R^{3}}(\vert \chi(x)\vert +\vert\nabla\chi(x)\vert)(1+\Vert x\Vert^2)<\infty$.          
		By \Cref{lem:poincaré}, the jump relation
		\eqref{jumpRel}, using the quotient definition of the
		$\widetilde{H}^{1/2}([\Gamma])$ norm and using that $\gamma
		(\DL u) = \gamma(\chi \DL u)$,
		\begin{equation*}
			\begin{aligned}
				\norm{[u]_\Gamma}_{\widetilde{H}^{1/2}([\Gamma])}
				& = \norm{[\DL u]_\Gamma}_{\widetilde{H}^{1/2}([\Gamma])}\\
				& \leq C_P \sum_{j = 1}^J \int_{\Omega_j}\abs{\nabla \DL u}^2 + 2\int_{\Omega_0} (\abs{\chi}^2 + \abs{\nabla \chi}^2)(\abs{ \DL u}^2 + \abs{\nabla \DL u}^2)\\
				& \leq C_p' \sum_{j = 0}^J \int_{\Omega_j}\abs{\nabla \DL u}^2 + C_P'\int_{\Omega_0} \abs{ \DL u}^2/(1+\Vert x\Vert^2)\;dx
			\end{aligned}
		\end{equation*}
		where we applied the Leibniz rule to the term $\nabla (\chi \DL u)$, and introduced the
		constant $C_p' = \max(C_P, C_{\chi})$. To conclude, we may apply the Poincaré inequality in the Beppo Levi space 
		\cite[Thm 2.10.10]{sauter2011boundary} which shows that
		$\int_{\Omega_0} \vert\DL u\vert^2/(1 + \norm{x}^2) dx \leq C\int_{\Omega_0} \abs{\nabla \DL u}^2 dx$ for some fixed
		constant $C>0$ that does not depend on $u$. This finishes the proof.
		
	\end{proof}
	
	To prove \Cref{thmWweak}, we start by two elementary technical lemmas. 
	\begin{lemma}[Almost all points of the skeleton are on exactly two boundaries]
		\label{lemnjnk}
		Let $j,k \in \{0,\ldots,J\}$, $j \neq k$. Then, for $\sigma_j$-almost all $x \in \partial \Omega_j \cap \partial \Omega_k$, there holds 
		\begin{equation}
			\label{oppVecs}
			\vec n_j(x) = -\vec n_k(x)\,.
		\end{equation}
		Moreover, if $j,k,l$ are three distinct indices of $\{0,\ldots,J\}$, then $\partial \Omega_j \cap \partial \Omega_k \cap \partial \Omega_l$ is of $\sigma_j$-measure $0$. 
	\end{lemma}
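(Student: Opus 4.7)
The plan is to exploit the polyhedral nature of the subdomains $\Omega_0,\ldots,\Omega_J$. Each boundary $\partial \Omega_j$ decomposes into a finite union of closed planar polygons (its faces), and at $\sigma_j$-almost every point of $\partial \Omega_j$—namely, every point lying in the relative interior of a face—the outward unit normal $\vec n_j$ is well-defined and locally constant, while $\Omega_j$ coincides, inside a small ball centered at the point, with one of the two open half-balls determined by that face. The complementary set of exceptional points, consisting of the edges and vertices of the polyhedron, is contained in a finite union of line segments and is therefore $\sigma_j$-null.

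For the first assertion, I would take $x \in \partial \Omega_j \cap \partial \Omega_k$ that is regular in the above sense for both $\partial \Omega_j$ and $\partial \Omega_k$; the exceptional set is $\sigma_j$-null, being contained in the union of the 1-skeletons of the two polyhedra. In a small ball $B_r(x)$, the sets $\Omega_j \cap B_r(x)$ and $\Omega_k \cap B_r(x)$ are open half-balls cut out by affine planes $P_j, P_k$ through $x$ with unit normals $\vec n_j(x)$ and $\vec n_k(x)$. Because $\Omega_j \cap \Omega_k = \emptyset$, these are disjoint open subsets of $B_r(x)$ each of Lebesgue measure $\tfrac12|B_r(x)|$, which forces $P_j = P_k$ and the two halves to be opposite. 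Consequently $\vec n_j(x) = -\vec n_k(x)$, as claimed.

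For the second assertion, I would apply the first pairwise: if $x \in \partial \Omega_j \cap \partial \Omega_k \cap \partial \Omega_l$ were a regular point for the three boundaries simultaneously, the identities $\vec n_j(x) = -\vec n_k(x)$, $\vec n_j(x) = -\vec n_l(x)$, and $\vec n_k(x) = -\vec n_l(x)$ would all have to hold, which is impossible for three unit vectors. Hence the triple intersection is contained in the union of the non-regular points of the three boundaries, i.e., in the 1-skeletons of the polyhedra, which is a finite union of line segments and therefore has vanishing two-dimensional surface measure. The only slightly delicate point is the local half-ball identification in the first step; this is elementary given the flatness of the polygonal faces, but must be spelled out to justify rigorously that disjointness of the two half-balls forces $P_j = P_k$ with opposite orientations.
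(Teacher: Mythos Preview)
Your argument is correct, but it proceeds differently from the paper. The paper exploits the simplicial mesh structure directly: since $\mathcal{M}_{\Omega}$ is a regular tetrahedral mesh, every triangle $T \in \mathcal{F}(\mathcal{M}_\Omega)$ in the interior of $\Omega$ is a facet of exactly two tetrahedra $K_j \in \mathcal{M}_{\Omega_j}$ and $K_k \in \mathcal{M}_{\Omega_k}$, so on the relative interior of $\abs{T}$ the relation $\vec n_j = -\vec n_k$ is immediate; the leftover (edges and vertices) has measure zero. For the triple intersection, the same combinatorial fact (each triangle lies in exactly two tetrahedra, hence on at most two subdomain boundaries) forces $\partial \Omega_j \cap \partial \Omega_k \cap \partial \Omega_l$ into the $1$-skeleton.

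Your route is instead a local measure-theoretic argument: at a point regular for both boundaries, the two domains fill disjoint half-balls of equal volume, which pins down the common tangent plane and opposite orientations; the triple-intersection statement then follows by bootstrapping the first part into a contradiction among three unit normals. This is a bit more work to set up (the half-ball identification), but it is more robust: it does not rely on the domains coming from a conforming simplicial mesh and would apply verbatim to any Lipschitz partition with piecewise flat boundaries. The paper's proof is shorter precisely because it cashes in on the mesh hypothesis built into Definition~\ref{def:polygonalMS}.
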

	\begin{proof}
		Let us first assume that $j,k \neq 0$. We then decompose $\partial \Omega_j \cap \partial \Omega_k$ in triangles, using the meshes $\partial \mathcal{M}_{\Omega_j}$ and $\partial \mathcal{M}_{\Omega_k}$, which are both regular. A triangle $T$ of $\partial \mathcal{M}_{\Omega_j}$ is incident to two tetrahedrons exactly, $K_j \in \mathcal{M}_{\Omega_j}$ and $K_k \in \mathcal{M}_{\Omega_k}$. For $x$ in the relative interior of $\abs{T}$, the relation \eqref{oppVecs} is obvious. What remains, i.e. (the convex hulls of) the edges of $\partial \mathcal{M}_{\Omega_j} \cap \partial \mathcal{M}_{\Omega_k}$, is a set of surface measure $0$. The case where one of the indices $j,k$ is $0$ is treated similarly. The last statement is also obtained by reasoning on the decomposition in triangles, edges and vertices.
	\end{proof}

	\begin{lemma}[All points of the skeleton are on at least two boundaries]
		\label{lem2bound}
		For each $j \in \{0\,,\ldots\,,J\}$, there holds
		\[\partial \Omega_j = \bigcup_{k \in \{0\,,\ldots J\} \setminus \{j\}} \partial \Omega_j \cap \partial \Omega_k\,.\]
	\end{lemma}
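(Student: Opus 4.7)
The plan is to fix $j \in \{0,\ldots,J\}$ and $x \in \partial \Omega_j$, and explicitly produce an index $k \neq j$ with $x \in \partial \Omega_k$. All ingredients are elementary point-set topology combined with the Lipschitz regularity of the partition.

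The first step I would carry out is the auxiliary observation that $\Omega_k \cap \overline{\Omega_j} = \emptyset$ for every $k \neq j$. Indeed, $\Omega_j$ and $\Omega_k$ are disjoint open sets in $\R^3$, so if some $y$ belonged to $\Omega_k \cap \overline{\Omega_j}$, the open set $\Omega_k$ would be an open neighborhood of $y$ meeting $\Omega_j$, contradicting disjointness. In particular, $\partial \Omega_j \cap \Omega_k = \emptyset$ for every $k \neq j$, which will provide the final piece of the argument.

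Next, I would use the Lipschitz regularity of $\Omega_j$ granted by \Cref{def:polygonalMS} (recall that $\{\Omega_0,\ldots,\Omega_J\}$ is a Lipschitz partition of $\R^3$ in the sense of \cite{claeys2013integral}) to produce a sequence $(y_n)_{n \in \N}$ in $\R^3 \setminus \overline{\Omega_j}$ converging to $x$. This is the standard characterization of boundary points of a Lipschitz domain: in a local chart $\Omega_j$ coincides with the upper half-space and $\R^3 \setminus \overline{\Omega_j}$ with the lower one, and the latter accumulates at every point of the common boundary. Since the partition satisfies $\R^3 = \bigcup_{k = 0}^{J} \overline{\Omega_k}$, each $y_n$ lies in some $\overline{\Omega_{k_n}}$, and the condition $y_n \notin \overline{\Omega_j}$ forces $k_n \neq j$.

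To conclude, I would invoke the pigeonhole principle on the finite index set $\{0,\ldots,J\} \setminus \{j\}$ to extract a subsequence along which $k_n$ equals a fixed $k \neq j$. Since $\overline{\Omega_k}$ is closed, passing to the limit yields $x \in \overline{\Omega_k}$. Combined with $x \in \partial \Omega_j \subset \overline{\Omega_j}$ and the first step, which gives $x \notin \Omega_k$, one obtains $x \in \partial \Omega_k$, proving the claim. The only substantive ingredient in this proof is the Lipschitz property, which is solely used to guarantee the existence of an exterior-approaching sequence; everything else is point-set topology, and I do not anticipate any real obstacle.
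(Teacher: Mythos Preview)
Your proof is correct and follows essentially the same approach as the paper's: both use the Lipschitz regularity of $\Omega_j$ (in the equivalent forms ``$\textup{int}(\overline{\Omega_j}) = \Omega_j$'' versus ``$\R^3 \setminus \overline{\Omega_j}$ accumulates at every boundary point''), the covering $\R^3 = \bigcup_k \overline{\Omega_k}$, and the disjointness of the open partition sets. The only cosmetic difference is that you argue directly via a sequence and pigeonhole to locate $k$, whereas the paper argues by contradiction using a single point in a small ball; the underlying ideas are identical.
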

	\begin{proof}
		Fix $j \in \{0\,,\ldots\,, J\}$, let $x \in \partial \Omega_j$ and, seeking a contradiction, assume that $x$ is not in $\partial \Omega_k$ for any $k \neq j$. We first deduce that $x$ is not in the set
		\[\bigcup_{k \in \{0,\ldots,J\}\setminus \{j\}} \overline{\Omega_k}\,.\]
		Indeed, it is impossible for $x$ to be in $\Omega_k$ for any $k \neq j$, because otherwise, there would be a ball $B_{x}$ centered at $x$ such that $B_{x} \subset \Omega_k$. But since $x \in \partial \Omega_j$, the ball $B_{x}$ contains at least a point of $\Omega_j$, implying that $\Omega_j \cap \Omega_k \neq \emptyset$, contradicting the fact that $\Omega_0\,,\ldots\,,\Omega_J$ is a Lipschitz partition of $\R^3$. We now construct a point $y$ which is not in the union 
		\[\bigcup_{k \in \{0,\ldots\,J\}} \overline{\Omega_k}\,.\]
		To do this, we remark that $x$ is at a positive distance of $\overline{\Omega_k}$ for every $k \neq j$, so there exists $\varepsilon$ small enough so that, for all $k \neq j$,  $B(x,\varepsilon)\cap \overline{\Omega_k} = \emptyset$. In this same ball, we claim that there must be a point $y \notin \overline{\Omega_i}$: if there were not, we would then have $B(x,\varepsilon) \subset \overline{\Omega_i}$, i.e. $x \in \textup{int}(\overline{\Omega})$. But, since every Lipschitz domain $\Omega$ satisfies the property $\textup{int}(\overline{\Omega}) = \Omega$, we would have $x \in \Omega_i$ which is impossible since $x$ was chosen in $\partial \Omega_i$ to begin with. The existence of $y$ is proven, yet impossible since 
		\[\bigcup_{k = 1}^J \overline{\Omega_k} = \R^3\,,\]
		which is the desired contradiction.	
	\end{proof}
	\begin{remark}
		The previous proof does not require a polygonal multi-screens, but can be applied to general, Lipschitz multi-screens.
	\end{remark}
	
	We now prove that the weak representation identity holds when $u$ and $v$ are sufficiently smooth, so that all integrations by parts make sense. We then obtain \Cref{thmWweak} using a density argument.
	\begin{lemma}[Weakly singular representation of the bilinear form $a$ on $X^\infty \times X^\infty$]
		\label{thmW}
		For $u,v \in X^\infty$, there holds 
		\begin{equation}
			\label{identity}
			a\left([u]_\Gamma,[v]_\Gamma\right) = \sum_{j,k = 1}^J \iint_{\Gamma_j\times \Gamma_k} \frac{ \curl_j u_j(x) \cdot \curl_k u_k(x')}{4\pi \norm{x - x'}}\,d\sigma_j(x) d\sigma_k(x')\,.
		\end{equation}
	\end{lemma}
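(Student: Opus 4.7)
My plan is to start from the volume representation of Lemma~\ref{idDLDL}, which gives
\[
a([u]_\Gamma,[v]_\Gamma) \;=\; \sum_{j=0}^J \int_{\Omega_j} \nabla \DL u \cdot \nabla \DL v\,dx,
\]
and transform the right-hand side into the desired double-surface integral via Green's identities followed by a Maue-type integration by parts. Since $u,v \in X^\infty$, both $\DL u$ and $\DL v$ are $C^\infty$ up to $\overline{\Omega_j}$ for every bounded $\Omega_j$ and harmonic inside $\Omega_j$. Applying Green's first identity on each such $\Omega_j$ converts its volume contribution into the surface integral $\int_{\partial \Omega_j}\gamma_j(\DL u)_j\,(\vec n_j \cdot \gamma_j\nabla \DL v)_j\,d\sigma_j$. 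The same identity holds for the unbounded $\Omega_0$ after truncation by a ball of radius $R$ and passage to the limit $R \to \infty$, relying on the decay $\DL u = O(\norm{x}^{-1})$ and $\nabla \DL v = O(\norm{x}^{-2})$ to kill the outer boundary contribution.

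Next I reorganise $\sum_{j=0}^J \int_{\partial \Omega_j}(\cdots)$ by pairing contributions on each face shared by two adjacent subdomains. By Lemma~\ref{lemnjnk}, outward normals of such a shared face are almost everywhere opposite. On portions lying off $\Gamma$, both $\DL u$ and $\vec n \cdot \nabla \DL v$ are continuous across the face, so the two contributions cancel pairwise. On portions lying on $\Gamma$, only $\vec n \cdot \nabla \DL v$ is continuous (reflecting $\W v \in H^{-1/2}([\Gamma])$), and combined with the jump relation $[\DL u]_\Gamma = [u]_\Gamma$ this collapses the two contributions on each $\Gamma_j$ into one integral of $[u]_\Gamma$ against a one-sided normal trace of $\nabla \DL v$.

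The final step is a classical Maue integration by parts. After substituting the explicit formula of Lemma~\ref{lemrepDL} for $\DL v$, the remaining kernel takes the form $\vec n_j(x) \cdot \nabla_x(\vec n_k(y) \cdot \nabla_y G(x,y))$ with $G(x,y) = 1/(4\pi\norm{x-y})$. Off the diagonal, using $\Delta_y G = 0$ together with standard tangential-differential identities on the closed Lipschitz surfaces $\partial\Omega_j$ and $\partial \Omega_k$, one rewrites this kernel as $\curl^x_j \cdot \curl^y_k\,G(x,y)$, up to tangential-divergence terms which vanish after Stokes' theorem on these closed Lipschitz surfaces. This transfers the tangential derivatives back onto $u_j$ and $v_k$; the integration range shrinks to $\Gamma_j \times \Gamma_k$ because contributions coming from $\Sigma_j$ combine in pairs across adjacent subdomains and cancel, using Lemma~\ref{lem2bound} and the compatibility of conormals.

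The main obstacle will be the careful justification of the Maue identity in the presence of the singular kernel $G$: the standard route is to remove an $\varepsilon$-tube around the diagonal, apply Stokes' theorem on the truncated domain, and pass to the limit $\varepsilon \to 0$. Controlling the lower-dimensional boundary contributions that can appear at the edges $\partial \Gamma_j \subset \Sigma_j$---and showing that they cancel upon summation over $j$---is the principal subtlety, and again uses Lemma~\ref{lem2bound} together with the compatibility between outward conormals of adjacent subdomains.
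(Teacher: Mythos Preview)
Your approach is viable but differs from the paper's in an essential way, and one claim needs correction. First, the assertion that $\DL u,\DL v \in C^\infty(\overline{\Omega_j})$ for $u,v\in X^\infty$ is false: double-layer potentials of smooth densities on screens have edge singularities at $\partial\Gamma_j$, so at best $\DL u\in H^1(\Delta,\Omega_j)$. This is enough for Green's first identity in the weak form, so the step survives, but the justification as written does not.

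The more interesting comparison is structural. You start from $\sum_j\int_{\Omega_j}\nabla\DL u\cdot\nabla\DL v$, push to the boundary via Green, then invoke a surface Maue identity, which forces you to manage boundary terms on $\partial\Gamma_j$ (or to re-extend the integrals from $\Gamma_j$ back to the closed $\partial\Omega_j$ before applying Stokes, a back-and-forth your writeup leaves ambiguous). The paper sidesteps all of this: it keeps $v$ itself (not $\DL v$) in the volume integral, decomposes $\nabla\DL u=\sum_k\vec F_k$ with $\vec F_k$ the piecewise gradient of the classical double-layer $D_k$ on $\partial\Omega_k$, and then invokes McLean's three-dimensional identity $\vec F_k=-\curl\vec A_k$ with $\vec A_k=\mathrm{SL}_k(\curl_k\varphi_k)$. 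Since $\curl\vec A_k\cdot\nabla v=\div(\vec A_k\times\nabla v)$, a single application of the divergence theorem on each $\Omega_j$ yields $\int_{\partial\Omega_j}\gamma_j\vec A_k\cdot\curl_j\psi_j$ directly, and the weakly singular formula drops out after substituting the single-layer expression for $\gamma_j\vec A_k$. No surface Stokes, no Maue, no edge-boundary cancellations, no hypersingular kernel ever appears. Your route gets there too, but the paper's argument is shorter and avoids precisely the ``principal subtlety'' you identify.
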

	\begin{proof}
		We adapt the approach of McLean \cite[Chap. 9]{McLean}. 
		In view of \Cref{lemnjnk}, it is not difficult to see that \eqref{identity} can be equivalently written as
		\begin{equation}
			\label{identity2}
			a(\varphi,\psi) = \sum_{j= 0}^J \sum_{k = 0}^J \int_{\partial \Omega_j}\int_{\partial \Omega_k}\frac{ \curl_{j} \varphi_j(x) \cdot \curl_{k} \psi_k(y)}{4\pi\norm{x - y}}\,d\sigma_j(x) \,d\sigma_k(y)\,,
		\end{equation} 
		where $\varphi = [u]_{\Gamma}$, $\psi = [v]_{\Gamma}$. Hence in what follows we prove that eq.~\eqref{identity2} holds. From now on, we fix $u$ and $v$ satisfying the hypothesis of the theorem. Furthermore, let us fix $x \notin \Sigma$, and let $\chi_{x}$ be a smooth compactly supported function that equals $0$ near $x$ and $1$ near $\Sigma$. Then the function $y \mapsto \mathcal{G}_{x}(y)$ (with this choice of $\chi$) is infinitely differentiable on $\partial \Omega_j$ for each $j$, so we may write
		\begin{align*}
			\DL \left(\gamma\,u\right)(x) &= \dduality{\gamma(u)}{\pi_n (\nabla \mathcal{G}_{x})} = -\sum_{j = 0}^J \int_{\partial \Omega_j} \vec n_j \cdot \vec \nabla \mathcal{G}_{x}\, \varphi_j \,d\sigma_j = \sum_{j=0}^J \int_{\partial \Omega_j} \vec n_j \cdot \vec \nabla \mathcal{G}_{x}\, \varphi_j\,d\sigma_j\,. 
		\end{align*}
		Hence, for every $x \in \R^3 \setminus \Sigma$, we have the formula 
		\begin{equation}
			\label{sumDLj}
			\DL \left( \gamma\,u \right)(x) = \sum_{j = 0}^J D_j(x)\,
		\end{equation}	
		where
		$$D_j(x) = \DL_j\,\varphi_j(x) \isdef -\int_{\partial \Omega_j} \vec n(y) \cdot \vec \nabla_{y}\left(\frac{1}{4\pi\norm{x - y}}\right) \varphi_j(y) d\sigma_j(y)$$
		is the classical double-layer potential associated to the domain $\Omega_j$ with density $\varphi_j$. 
		Since $(D_j)_{|\Omega_j}$ is in $H^{1}_{loc}(\Omega_j)$ and $(D_j)_{|\Omega_j^c}$ in $H^1_{loc}(\Omega_j^c)$, we can define a locally integrable vector field $\vec F_j$ by  
		\[\vec F_j(x) = \begin{cases}
			\vec \nabla [(D_{j})_{|\Omega_j}](x) & \textup{for } x \in  \Omega_j \,,\\
			\vec \nabla [(D_{j})_{|\Omega_j^c}](x) & \textup{for } x \in  (\overline{\Omega_j})^c\,.
		\end{cases}\]
		We introduce the single-layer potential $\textup{SL}_j$ associated to the Lipschitz domain $\Omega_j$. For any smooth vector field $\vec u_j$ on $\partial \Omega_j$, it is defined by
		\[\forall x \in \R^3 \setminus \partial \Omega_j\,, \quad  \textup{SL}_j \vec u_j(x) \isdef \int_{\partial \Omega_j} \frac{\vec u_j(y)}{4\pi \norm{x - y}} d\sigma_j(y)\,.\]
		Let $\vec A_j = \textup{SL}_j\, (\curl_j \,\varphi_j)$. For each $k \in \{1\,\ldots\,J\}$, the trace of $\vec A_j$ on $\partial \Omega_k$ is well-defined and given by 
		\begin{equation}
			\label{gammakAj}
			\gamma_k \vec A_j(x) = \int_{\partial \Omega_j} \frac{\curl_j \varphi_j(y)}{4\pi||x - y||} d\sigma_j(y)\,,
		\end{equation}
		since this integral is at most weakly singular for $x \in \partial \Omega_k$. Observe that, by the assumption that $u \in X^\infty$, we can choose $\widetilde{u}_j \in C^\infty_c(\R^3)$ such that $u_j$ coincides with $\widetilde{u}_j$ in a neighborhood of $\partial \Omega_j$, and thus, $\curl_j \varphi_j = \vec n_j \times \gamma_j \nabla \widetilde{u}_j$. The central argument is then the following identity, obtained via integration by part:
		\[{\curl \vec A_j = -\vec F_j\quad \textup{on } \mathbb{R}^3\,,}\]
		see \cite[Lem. 9.14]{McLean} (the difference of sign with respect to \cite{McLean} comes from opposite conventions in the definition of $\DL$). 
		By definition, we have
		\begin{align*}
			\dduality{\W \gamma(u)}{\gamma(v)} &=\dduality{\pi_n(\vec\nabla \,\DL \gamma(u))}{\gamma(v)}\\
			&=\sum_{j = 0}^J \int_{\Omega_j} \vec \nabla \DL (\gamma \,u) \cdot \vec \nabla v + v\left\{\Delta \DL(\gamma\, u)\right\}\,dx\,. 
		\end{align*}
		The second term vanishes, so that, using eq.~\eqref{sumDLj},
		\begin{align*}
			\dduality{\W \gamma(u)}{\gamma(v)}  &= \sum_{j,k = 0}^J \int_{\Omega_j}\vec \nabla D_k \cdot \vec \nabla v \,dx\,,\\
			&=\sum_{j,k = 0}^J \int_{\Omega_j} \vec F_k \cdot \vec \nabla v \,dx\,,\\
			&=\sum_{j,k = 0}^J \int_{\Omega_j}  -\curl(\vec A_k)\cdot \vec \nabla v\,dx = -\sum_{j,k = 0}^J \int_{\Omega_j} \div(\vec A_k \times \vec \nabla v)\,dx\,,
		\end{align*}
		in view of the identities $\div(\vec A \times \vec B) = \curl \,\vec A \cdot \vec B -\vec A \cdot  \curl \,\vec B$ and $\curl \vec \nabla = 0$. Applying the divergence theorem in each $\Omega_j$, we get 
		\begin{align*}
			\dduality{\W \gamma(u)}{\gamma(v)} & = -\sum_{j,k = 0}^J \int_{\partial \Omega_j} \vec n_j \cdot (\gamma_j \vec A_k \times \vec \nabla_j\, \psi_j)d\sigma_j\,.
		\end{align*}
		Permuting the triple product, 
		\begin{equation}
			\label{eq:endProofW}
			\dduality{\W \gamma(u)}{\gamma(v)} = \sum_{j,k = 0}^J \int_{\partial \Omega_j} \gamma_j\vec A_k \cdot \curl_j\, \psi_j\,d\sigma_j\,.
		\end{equation}
		We obtain \eqref{identity2} after replacing $\gamma_k \vec A_j$ with eq.~\eqref{gammakAj}. This proves the Theorem, since $a([u]_\Gamma,[v]_\Gamma) = \dduality{W(\gamma (u))}{\gamma (v)}$. 
	\end{proof}
	\begin{proof}[Proof of \Cref{thmWweak}]
		The proof involves material from  \cite{buffa2002traces,buffa2002boundary}, which we recall here. Let 
		\[\vec L^2_t(\partial \Omega_j) \isdef \enstq{u \in (L^2(\Gamma))^3}{\vec u \cdot \vec n_j = 0 \textup{ a.e. on } \partial \Omega_j}\,.\] 
		Let $\pi_{\tau,j}: L^2(\partial \Omega_j)^3 \to \vec L^2_t(\Gamma)$ be the operator defined, for $\vec u = \vec U_{|\Gamma}$, $\vec U \in \mathcal{D}(\R^3)$, by  
		\[\pi_{\tau,j} \vec u(\vec x)= \vec n \times (\vec U(\vec x) \times \vec n)\,, \quad \forall x \in \partial \Omega_j\]
		and extended to $(L^2(\Gamma))^3$ by density. Let $V_\pi(\partial \Omega_j) \subset \vec L^2_t(\partial \Omega_j)$ be the Hilbert space defined by
		\[V_\pi(\partial \Omega_j) \isdef \enstq{\pi_{\tau,j} \vec u}{\vec u \in (H^{1/2}(\partial \Omega_j))^3}\,,\]
		with the graph norm, and let $V'_\pi(\partial \Omega_j)$ be the dual of $V_\pi(\partial \Omega_j)$. The space $V_\pi$ is dense in $\vec L^2_t(\partial \Omega_j)$, hence one can identify $\vec L^2_t(\partial \Omega_j)$ with a dense subspace of $V'_\pi(\partial \Omega_j)$, and the duality pairing $_{V'_\pi}\langle \cdot,\cdot\rangle_{V_\pi}$ is the unique continuous extension of the $\vec L^2_t$ pairing. Let $\textup{div}_j: \vec L^2_t(\partial \Omega_j) \to H^{-1}(\partial \Omega_j)$ be the adjoint of $\nabla_j$, 
		where $H^{-s}(\partial \Omega_j)$ is the dual of $H^s(\partial \Omega_j)$ for $0 \leq s \leq 1$. Finally, define 
		\[\vec H^{-1/2}(\div_{j},\partial \Omega_j) \isdef \enstq{\vec u \in V'_\pi(\partial \Omega_j)}{\div_j \vec u \in H^{-1/2}(\partial \Omega_j)}\,,\] 
		equipped with the graph norm. Since $u_k \in H^1(\Omega_k)$, we have $\nabla u_j \in H(\curl,\Omega_j)$ hence, by \cite[Thm 4.1]{buffa2002traces}, $\vec n_j \times \gamma_j (\nabla u_j) \in \vec H^{-1/2}(\div_{j},\partial \Omega_j)$ with
		\[\norm{\vec n_j \times \gamma_j (\nabla u_j)}_{\vec H^{-1/2}(\div_{j},\partial \Omega_j)} \leq C \norm{u_j}_{H^1(\Omega_j)}\,.\] 
		Furthermore, by [Prop. 2] and [Thm. 4] of \cite{buffa2002boundary} the map $\textup{SL}_j$ defined for $\varphi_j \in \vec L^2_t(\Gamma)$ by 
		\[\textup{SL}_j \vec \varphi_j \isdef \int_{\partial \Omega_j} \frac{\vec \varphi_j(y)d\sigma_j(y)}{4\pi \norm{\vec x - y}}d\sigma_j(y)\]
		admits a unique linear continuous extension into a mapping $\textup{SL}_j: \vec H^{-1/2}(\div_j,\partial \Omega_j) \to H^1_{\rm loc}(\R^3)$. Namely, this extension reads $V'_\pi \ni \vec \lambda \mapsto \textup{SL}_j (i_\pi(\vec \lambda))$, where $i_\pi: V'_\pi \to (H^{-1/2}(\partial \Omega_j))^3$ is defined in eq. (10) of \cite{buffa2002boundary}. Therefore, the bilinear form $M: H^1(\R^3 \setminus \Gamma) \times {H^1(\R^3 \setminus \Gamma)}$ defined by
		\begin{equation}
			\label{eq:defMuv}
			M(u,v) \isdef \sum_{j,k= 0}^J \duality{\pi_{\tau,k}\,\textup{SL}_j (\vec n_j \times \gamma_j \nabla u_j)}{\vec n_k \times \gamma_k \nabla u_k}_{V_\pi(\partial \Omega_k) \times V'_\pi(\partial \Omega_k)}
		\end{equation} 
		is continuous. Moreover, when $\gamma_j u \in H^1(\partial \Omega_j)$, both terms in the duality pairing appearing in eq.~\eqref{eq:defMuv} are in $\vec L^2_t(\partial \Omega_j)$ so, using the commuting property $\gamma_j \nabla = \nabla_j$, the expression becomes (simplifying the integrals over $\partial \Omega_j \setminus \Gamma$ as pointed out in the proof of \Cref{thmW})
		\begin{equation*}
			M(u,v) = \sum_{j,k = 0}^J \iint_{\Gamma_j \times \Gamma_k} \frac{\curl_j u_j(x) \cdot \curl_k u_k(x')}{\norm{x - x'}}d\sigma_j(x)d\sigma_k(x') \quad \forall u,v  \textup{ s.t. } \gamma_j u, \gamma_j v \in H^1(\partial \Omega_j)\,.
		\end{equation*}
		For $u,v \in X^\infty$, we deduce that that $M(u,v) = a([u]_\Gamma,[v]_\Gamma)$ by \Cref{thmW}. Hence, the continuous bilinear forms $M(\cdot,\cdot)$ and $a(\cdot,\cdot)$ agree on $X^\infty \times X^\infty \subset H^1(\R^3 \setminus \Gamma) \times H^1(\R^3 \setminus \Gamma)$, therefore, by the density result of \Cref{thm:densityXinfty}, $a([u],[v]) = M(u,v)$ for all $u,v \in H^1(\R^3 \setminus \Gamma)$, concluding the proof. 
	\end{proof}

	\section{Convergence of the Galerkin solution}
	\label{sec:proofGalerkConv}
	We first prove a technical result. Let
	$$X^0 \isdef \enstq{u \in C^0(\R^3 \setminus \Gamma)}{u_{|_{\Omega_j}} \in C^0(\overline{\Omega_j}) \textup{ for all $j = 0,\ldots,J$}}$$
	where, for any open set $U \subset \R^3$,   $C^0(\overline{U})$ is the set of uniformly continuous functions on $U$.
	\begin{lemma}
		\label{pwInterp}
		For $u \in X^0$, let $I_h u$ be the element of $L^2(\R^3)$ defined by 
		$$(I_h u)_{|\Omega_j} = I_{h,j} u_j\,,$$
		where $I_{h,j} :C^0(\overline{\Omega_j}) \to V_h(\Omega_j)$ is the standard Lagrange interpolant. Then $I_h u \in V_h(\Omega \setminus \Gamma)$. 
	\end{lemma}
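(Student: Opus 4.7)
The plan is to verify the three defining properties of $V_h(\Omega\setminus\Gamma)$ for $I_hu$: piecewise linearity on $\mathcal{M}_{\Omega,h}$, the vanishing condition on $\Omega_0$, and $H^1$-regularity on $\R^3\setminus\Gamma$. The first is immediate, since on each tetrahedron $K\in\mathcal{M}_{\Omega_j,h}$ the restriction $(I_hu)_{|K}=(I_{h,j}u_j)_{|K}$ is a Lagrange interpolant of a continuous function on $K$ and is therefore linear. The vanishing condition will follow under the natural convention that $I_{h,0}u_0$ is identified with the zero function when $u_{|\Omega_0}=0$, which is the relevant setting for the intended application of the lemma.

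The main content is the $H^1$-regularity. I would use the standard fact that a piecewise polynomial function on $\mathcal{M}_{\Omega,h}$ lies in $H^1(\R^3\setminus\Gamma)$ if and only if it has no jump across any face $F\in\mathcal{F}(\mathcal{M}_{\Omega,h})\setminus\mathcal{M}_{\Gamma,h}$; jumps across faces of $\mathcal{M}_{\Gamma,h}$ are irrelevant because these faces lie in $\Gamma$ and are removed from the domain. Across faces internal to a single region $\Omega_j$, continuity is built into the definition of the single-piece Lagrange interpolant $I_{h,j}u_j$. The nontrivial situation is when $F\subset\Sigma\setminus\Gamma$, so that $F$ is shared by two tetrahedra $K\in\mathcal{M}_{\Omega_j,h}$ and $K'\in\mathcal{M}_{\Omega_{j'},h}$ with $j\neq j'$.

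In this case I plan to show that $(I_{h,j}u_j)_{|F}$ and $(I_{h,j'}u_{j'})_{|F}$ agree at each of the three vertices of $F$; being linear on $F$, they will then coincide on all of $F$. The key geometric observation is that $\textup{int}(|F|)\subset\R^3\setminus\Gamma$: indeed, if $\textup{int}(|F|)$ met $|F'|$ for some $F'\in\mathcal{M}_{\Gamma,h}$, the mesh compatibility condition $|F\cap F'|=|F|\cap|F'|$ would force $F=F'$, contradicting $F\notin\mathcal{M}_{\Gamma,h}$. Since $u\in C^0(\R^3\setminus\Gamma)$, the continuous extensions $u_j\in C^0(\overline{\Omega_j})$ and $u_{j'}\in C^0(\overline{\Omega_{j'}})$ must both coincide with $u$ on $\textup{int}(|F|)$. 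For any vertex $\vec x$ of $F$, which may itself lie on $\Gamma$, I would pick a sequence $y_n\in\textup{int}(|F|)$ with $y_n\to\vec x$ and pass to the limit using continuity of the extensions to obtain $u_j(\vec x)=\lim u_j(y_n)=\lim u(y_n)=\lim u_{j'}(y_n)=u_{j'}(\vec x)$, which is the required agreement.

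I expect the main subtlety to be precisely this vertex-value matching at vertices of $F$ that lie on $\Gamma$: without the $X^0$ hypothesis supplying continuous extensions $u_j$ up to $\overline{\Omega_j}$, the pointwise values at such vertices could be ambiguous, and the two interpolants could jump across $F$. Once continuity across every non-$\Gamma$ face is established, the three defining properties of $V_h(\Omega\setminus\Gamma)$ are all verified and the lemma follows.
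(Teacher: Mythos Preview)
Your argument is correct and rests on the same core step as the paper's: for a face $F\notin\mathcal{M}_{\Gamma,h}$ shared by tetrahedra on either side, the continuous extensions $u_j$ and $u_{j'}$ agree at every vertex of $F$, proved by a limit through points of $\textup{int}(|F|)\subset\R^3\setminus\Gamma$ using the $X^0$ hypothesis. The organization differs slightly. The paper works vertex by vertex: it shows that $u_K(\vec x_i)=u_{K'}(\vec x_i)$ for face-adjacent $K,K'$ via the same limit argument, extends this to all of $\Delta_{i,j}$ by chaining along face-connected paths, and then exhibits $I_hu$ explicitly as $\sum u_{i,j}\phi_{i,j}$ in the split basis, which lies in $V_h(\Omega\setminus\Gamma)$ by construction. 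You instead check the defining properties of $V_h(\Omega\setminus\Gamma)$ directly, reducing $H^1$-regularity to continuity across each non-$\Gamma$ face; this is a touch more economical because it avoids the split-basis machinery and the path argument altogether, while the paper's route has the minor advantage of producing an explicit basis expansion of $I_hu$. Your caveat about the vanishing condition on $\Omega_0$ is appropriate: the lemma as stated tacitly assumes $u$ vanishes on $\Omega_0$ (which is how it is applied in the proof of \Cref{thmConvPhiStar}), and under that assumption continuity across $\partial\Omega$ also follows since $\partial\Omega\cap\Gamma=\emptyset$.
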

	\begin{proof}
		Let $u \in X^0$ and let $\vec x_i$ be a vertex of $\mathcal{M}_{\Omega,h}$. Let $K$ and $K'$ be two tetrahedra of $\Delta_{i,j}$ with a common face $F$ (in particular, the relative interior of $|F|$ is disjoint from $\Gamma$). The definition of $X^0$ implies that $u$ is uniformly continuous on $\textup{int}(\abs{K})$ for each tetrahedron $K$ of $\mathcal{M}_{\Omega,h}$. Hence, we can define the continuous extension $u_K$ of $u_{|\textup{int}(\abs{K})}$ to the whole $\abs{K}$ (recall that this is the {\em closed} convex hull of $K$). Let $u_{K'}$ be defined similarly on $K'$. We start by showing that $u_{K}(\vec x_i) = u_{K'}(\vec x_i)$. 
		Indeed, assume that those values differ by a positive quantity $\delta \isdef \abs{u_K(\vec x_i) - u_{K'}(\vec x_i)} > 0$, and let $\varepsilon = \delta /4$. Using that $u_K$ is uniformly continuous, one can find $\eta > 0$ such that for all $ x,  x' \in \abs{K}$, $\abs{ x -  x'} < \eta$  implies that $\abs{u(x) - u( x')} \leq \varepsilon$. Let $\eta'$ be defined similarly for $K'$. Finally, choose a point $x$ in the common face $F$ such that $x$ is at a positive distance of $\Gamma$ (it suffices to take $x$ in the relative interior of $\abs{F}$) and $\abs{x - \vec x_i} \leq \min(\eta,\eta')$. We then have 
		\begin{align*}
			0 < \delta& = \abs{u_K(\vec x_i) - u_{K'}(\vec x_i)}\\
			&\leq \abs{u_K(x) - u_K(\vec x_i)} + \abs{u_{K'}(x) - u_{K'}(\vec x_i)} + \underbrace{\abs{u_K(x) - u_{K'}(x)}}_{= 0} \\
			& \leq 2\varepsilon = \frac{\delta}{2}\,,
		\end{align*}
		which is a contradiction (we used that $u_K(x) = u_{K'}(x) = u(x)$, since $u$ is continuous on $\R^3 \setminus \Gamma$). We deduce that as soon as $K$ and $K'$ both belong to $\Delta_{i,j}$, then $u_{K}(\vec x_i) = u_{K'}(\vec x_i)$ by considering a face-connected path in $\Delta_{i,j}$ from $K$ to $K'$. 
		
		For each $(i,j) \in \mathcal{H}(\Omega)$, choose an element $K \in \mathcal{M}_{\Omega,h}$ and let $u_{i,j} \isdef u_{K}(\vec x_i)$. Let 
		\[u_h = \sum_{(i,j)\in \mathcal{H}(\Omega)} u_{i,j} \phi_{i,j}\,,\] 
		where $\{\phi_{i,j}\}_{(i,j) \in \mathcal{H}(\Omega)}$ is the set of split basis functions. The function $u_h$ obviously belongs to $V_h(\Omega \setminus \Gamma)$ and we now show that $I_h u = u_h$. Let $K$ be an arbitrary element of $\mathcal{M}_{\Omega,h}$ and let $\vec x_i$ be a vertex of $K$. Let $(z_{n})_{n \in \N}$ be a sequence of points of $\textup{int}{\abs{K}}$ converging to $\vec x_i$. Let $j$ be the element of $\{1,\ldots,q_i\}$ be such that $K \in \Delta_{i,j}$. Recalling that, by definition of $\{\phi_{i,j}\}_{(i,j) \in \mathcal{H}(\Omega)}$, $\lim_{n \to \infty} \phi_{i',j'}(z_n) = \delta_{i,i'}\delta_{j,j'}$, we deduce 
		\[\lim_{n \to \infty} u_h(z_n) = u_{i,j}\,.\]
		Let $\ell$ be such that $K \in \mathcal{M}_{\Omega_\ell,h}$ and let $u_{\ell}$ denote the continuous extension of $u_{|_{\Omega_\ell}}$ to $\overline{\Omega_\ell}$. Then by definition 
		\[I_h u(z_n) = I_{h,\ell} u_\ell (z_n)\,;\]
		therefore, using the continuity of the piecewise linear function $I_{h,\ell} u_\ell$ and the definition of $I_{h,\ell}$, 
		\[\lim_{n \to \infty} I_h u(z_n) = (I_{h,\ell} u_{\ell})(\vec x_i) = u_{\ell}(\vec x_i)\,.\]
		Since $\textup{int}(\abs{K}) \subset \Omega_\ell$, $u_{\ell}$ coincides with $u_K$ on $\textup{int}(\abs{K})$, implying
		\[\lim_{n \to \infty} I_h u(z_n) = u_K(\vec x_i) = u_{i,j}\]
		by what precedes, since $K \in \Delta_{i,j}$. In conclusion, the restrictions to $\textup{int}(\abs{K})$ of $I_h u$ and $u_h$ are linear functions whose limits at every vertex of $K$ coincide, hence they are equal on this set. The functions $I_h u$ and $u_h$ thus agree almost everywhere on $\R^3$, and the proof is concluded. 
	\end{proof}
	\begin{proof}[Proof of \Cref{thmConvPhiStar}]
		By \Cref{thm:densityXinfty}, it is sufficient to prove that for any and $u \in X^\infty$ with $\textup{supp}\, u \subset \Omega$, there exists a sequence $(u_h)_{h > 0}$ of elements $u_h \in V_h(\Omega \setminus \Gamma)$ such that 
		\[\lim_{h \to 0}\norm{u - u_h}_{H^1(\R^3 \setminus \Gamma)} = 0\,.\]
		For each $h > 0$, we define $u_h \isdef I_h u$ where $I_h$ is the Lagrange interpolant of \Cref{pwInterp}. Since $X^\infty \subset X^0$, we indeed have $I_h u \subset V_h(\Omega \setminus \Gamma)$ by \Cref{pwInterp}, and
		\[\norm{u - I_h u}_{H^1(\R^3 \setminus \Gamma)}^2 = \sum_{j = 0}^J \norm{u_j - I_{h,j} u_j}_{H^1(\Omega_j)}^2\,,\]
		and we conclude using the well-known approximation properties of the Lagrange interpolants $I_{h,j}$.
	\end{proof}

\section{Stability conditions for induced splittings}
\label{sec:proofHiptmairMao}
We break the proof of \Cref{mainthm} into several lemmas. 
\begin{lemma}[Stability of the discrete harmonic lifting]
	\label{lem:StabDisc}
	Let $\Pi_h$ be as in \textup{\textbf{(A)}}, and let $\Phi: \mathbb{X} \to \mathbb{V}$ be the {\bf harmonic lifting} (or minimal norm extension) characterized by 
	\[\forall f \in \mathbb{X}\,, \quad T(\Phi f) = f\,, \textup{ and } \norm{\Phi f}_{\mathbb{V}} = \norm{f}_{\mathbb{X}}\,.\]
	Write $E_h \isdef \Pi_h \circ \Phi$. Then
	\[\forall f_h \in X_h\,, \quad T(E_h f_h) = f_h\,,\]
	and one has the bound
	\[\norm{E_h f_h}_{\mathbb{V}} \leq \norm{\Pi_h}_{\mathcal{L}(\mathbb{V})} \norm{f_h}_{\mathbb{X}}\,.\]
\end{lemma}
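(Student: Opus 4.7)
The proof is short; both assertions follow almost directly from the properties of $\Pi_h$ and $\Phi$ combined with the definitions.

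The plan is to handle the norm estimate first, since it is immediate. By definition of $E_h$, for any $f_h\in X_h$ we have
\[
\norm{E_h f_h}_{\mathbb{V}} \;=\; \norm{\Pi_h \Phi f_h}_{\mathbb{V}} \;\leq\; \norm{\Pi_h}_{\mathcal{L}(\mathbb{V})}\,\norm{\Phi f_h}_{\mathbb{V}} \;=\; \norm{\Pi_h}_{\mathcal{L}(\mathbb{V})}\,\norm{f_h}_{\mathbb{X}},
\]
where the last equality uses that $\Phi$ is, by construction, an isometry from $\mathbb{X}$ into $\mathbb{V}$.

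The heart of the proof is the identity $T(E_h f_h) = f_h$. The idea is to exploit that $\Pi_h$ both reproduces elements of $V_h$ and preserves $\mathbb{V}_0$: together these two properties ensure that $\Pi_h$ respects the equivalence classes modulo $\mathbb{V}_0$ whenever the class contains an element of $V_h$. Concretely, fix $f_h \in X_h$ and set $v := \Phi f_h$, so that $T(v) = f_h$. Since $f_h \in X_h = T(V_h)$, there exists $w_h \in V_h$ with $T(w_h) = f_h$, hence $v - w_h \in \ker T = \mathbb{V}_0$. Applying $\Pi_h$ and using linearity together with $\Pi_h w_h = w_h$ (projection onto $V_h$) gives
\[
\Pi_h v \;=\; w_h + \Pi_h(v - w_h).
\]
By assumption \textbf{(A)}, $\Pi_h(v - w_h) \in \mathbb{V}_0$, and therefore
\[
T(E_h f_h) \;=\; T(\Pi_h v) \;=\; T(w_h) + \underbrace{T(\Pi_h(v-w_h))}_{= 0} \;=\; f_h.
\]

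I do not foresee any real obstacle: the only point worth being careful about is not mistaking ``$\Pi_h$ preserves $\mathbb{V}_0$'' for ``$\Pi_h$ preserves the quotient class pointwise.'' The latter does not hold in general for arbitrary $v \in \mathbb{V}$; it holds precisely when $T(v) \in X_h$, which is the situation we are in because $f_h \in X_h$ by hypothesis. This is exactly what makes the choice of $w_h \in V_h$ possible and allows the cancellation above.
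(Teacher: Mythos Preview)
Your proof is correct and follows essentially the same approach as the paper: both pick a representative $w_h\in V_h$ of $f_h$, use $\Phi f_h - w_h \in \mathbb{V}_0$, apply the $\mathbb{V}_0$-preserving projection $\Pi_h$, and then $T$. The only cosmetic difference is that you prove the norm bound first and the identity second, whereas the paper does it in the opposite order.
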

\begin{proof}
	Let $f_h \in X_h$, and fix $u_h \in V_h$ such that $T(u_h) = f_h$. Observe that $u_h - \Phi f_h \in \mathbb{V}_0$. Indeed, 
	\[T(u_h - \Phi f_h) = T(u_h) - T(\Phi f_h) = f_h - f_h = 0\,.\]
	Since $\Pi_h$ preserves $\mathbb{V}_0$, one has $\Pi_h (u_h - \Phi f_h) \in \mathbb{V}_0$. Consequently, 
	\begin{align*}
		0 &= T(\Pi_h (u_h - \Phi f_h)) = T(\Pi_h u_h) - T(E_h f_h) = T(u_h) - T(E_h f_h)= f_h - T(E_h f_h)\,,
	\end{align*}
	where in the third equality, we used hat $\Pi_h u_h = u_h$ by \textbf{(A)}. This proves the first claim. The second claim is obvious since $E_h = \Pi_h \circ \Phi$ and, by definition of $\Phi$, $\norm{\Phi}_{\mathcal{L}(\mathbb{X},\mathbb{V})} = 1$. 
\end{proof}
\begin{lemma}
	\label{lemDiscreteTraceNorm}
	Assume that \textup{\textbf{(A)}} holds.  Then
	\[\forall u_h \in V_h\,, \quad \min \enstq{\norm{u_h - u_{h,0}}_{\mathbb{V}}}{u_{h,0} \in V_h \cap \mathbb{V}_0}\leq \norm{\Pi_h}_{\mathcal{L}(\mathbb{V})} \norm{T(u_h)}_{\mathbb{X}}\,.\]
\end{lemma}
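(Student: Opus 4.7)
The plan is to build an explicit candidate $u_{h,0} \in V_h \cap \mathbb{V}_0$ realizing the bound, using the stable discrete extension $E_h = \Pi_h \circ \Phi$ furnished by the previous lemma. Given $u_h \in V_h$, set $f_h := T(u_h) \in X_h$ and define
\[u_{h,0} := u_h - E_h f_h\,.\]
The key remarks are (i) $E_h f_h \in V_h$ since $\Pi_h$ has range $V_h$ by \textbf{(A)}, so $u_{h,0} \in V_h$; and (ii) by the first conclusion of \Cref{lem:StabDisc}, $T(E_h f_h) = f_h = T(u_h)$, hence $T(u_{h,0}) = 0$, i.e., $u_{h,0} \in \mathbb{V}_0$. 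Therefore $u_{h,0}$ is an admissible element for the minimization on the left-hand side.

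It then remains to estimate the minimum at this particular choice:
\[\min \enstq{\norm{u_h - v}_{\mathbb{V}}}{v \in V_h \cap \mathbb{V}_0} \leq \norm{u_h - u_{h,0}}_{\mathbb{V}} = \norm{E_h f_h}_{\mathbb{V}}\,.\]
The second conclusion of \Cref{lem:StabDisc} gives $\norm{E_h f_h}_{\mathbb{V}} \leq \norm{\Pi_h}_{\mathcal{L}(\mathbb{V})}\norm{f_h}_{\mathbb{X}} = \norm{\Pi_h}_{\mathcal{L}(\mathbb{V})}\norm{T(u_h)}_{\mathbb{X}}$, which is the claimed bound.

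There is no serious obstacle here; the whole content has been packaged into \Cref{lem:StabDisc}. The only point worth checking carefully is that $u_{h,0}$ indeed lies in the intersection $V_h \cap \mathbb{V}_0$ (both memberships must be verified separately, and each follows directly from one of the two defining properties of $E_h$ given by \textbf{(A)} and the lemma). Once this is verified, the desired inequality follows by a single application of the triangle inequality specialized to this explicit choice of $u_{h,0}$.
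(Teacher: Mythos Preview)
Your proof is correct and follows essentially the same approach as the paper: you define the candidate $u_{h,0} = u_h - E_h(T(u_h))$, verify its membership in $V_h \cap \mathbb{V}_0$ via the properties of $E_h$ from \Cref{lem:StabDisc}, and conclude using the norm bound on $E_h$. The only (harmless) slip is the phrase ``triangle inequality'' at the end; no triangle inequality is actually used---you are simply bounding the minimum by the value at the explicit choice $u_{h,0}$.
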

\begin{proof}
	Let $u_h \in V_h$, and put 
	\[v_{h,0} \isdef u_h - E_h(T(u_h))\,.\]
	By the \Cref{lem:StabDisc} above, $T(v_{h,0}) = 0$, hence $v_{h,0} \in V_h \cap \mathbb{V}_0$. Therefore, 
	\begin{align*}
		\min \enstq{\norm{u_h - u_{h,0}}_{\mathbb{V}}}{u_{h,0} \in V_h \cap \mathbb{V}_0}
		&\leq \norm{u_h - v_{h,0}}_{\mathbb{V}}\\
		&= \norm{E_h(T(u_h))}_{\mathbb{V}} \leq \norm{\Pi_h}_{\mathcal{L}(\mathbb{V})} \norm{T(u_h)}_{\mathbb{X}}\,,
	\end{align*}
	where in the last inequality, we used the bound on the norm of $E_h$ from \Cref{lem:StabDisc}.
\end{proof}
\begin{corollary}
	\label{cor:Aprime}
	Consider the statement:\\
	
	\textup{\textbf{(A')}} There exists a linear operator $E_h: X_h \to V_h$ such that 
	\[\forall f_h\in X_h \,, \quad T(E_h f_h) = f_h\,.\]
	Then \textup{\textbf{(A)}} implies \textup{\textbf{(A')}}, with $\norm{E_h}_{\mathcal{L}(\mathbb{X},\mathbb{V})} \leq \norm{\Pi_h}_{\mathcal{L}(\mathbb{V})}$. 	
\end{corollary}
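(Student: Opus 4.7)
The plan is to reuse the operator $E_h \isdef \Pi_h \circ \Phi$ already constructed in the preceding \Cref{lem:StabDisc}, where $\Phi: \mathbb{X} \to \mathbb{V}$ denotes the harmonic lifting characterized by $T \circ \Phi = \textup{Id}_{\mathbb{X}}$ and $\norm{\Phi f}_{\mathbb{V}} = \norm{f}_{\mathbb{X}}$. The content of that lemma already yields, under hypothesis \textbf{(A)}, the lifting identity $T(E_h f_h) = f_h$ for every $f_h \in X_h$; this is the defining property required by \textbf{(A')}. Since $E_h$ is manifestly linear as a composition of two linear maps and takes values in $V_h$ (because $\Pi_h$ projects onto $V_h$), the existence part of \textbf{(A')} is handled verbatim by \Cref{lem:StabDisc}.

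It then remains to read off the operator norm bound. I would simply chain the estimate $\norm{E_h f_h}_{\mathbb{V}} = \norm{\Pi_h (\Phi f_h)}_{\mathbb{V}} \leq \norm{\Pi_h}_{\mathcal{L}(\mathbb{V})} \norm{\Phi f_h}_{\mathbb{V}} = \norm{\Pi_h}_{\mathcal{L}(\mathbb{V})} \norm{f_h}_{\mathbb{X}}$, where the last equality uses the isometry property of the harmonic lifting $\Phi$. Taking the supremum over $f_h \in X_h$ with $\norm{f_h}_{\mathbb{X}} \leq 1$ yields the announced inequality $\norm{E_h}_{\mathcal{L}(\mathbb{X},\mathbb{V})} \leq \norm{\Pi_h}_{\mathcal{L}(\mathbb{V})}$. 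There is no genuine obstacle here: the corollary is essentially a repackaging of the two conclusions of \Cref{lem:StabDisc} into a self-contained hypothesis that can be invoked downstream without reintroducing the quotient map $T$ or the harmonic lifting $\Phi$.
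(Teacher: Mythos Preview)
Your proof is correct and matches the paper's approach: the corollary is stated without proof there, as it follows immediately from \Cref{lem:StabDisc} by taking $E_h = \Pi_h \circ \Phi$, exactly as you do. The only additional remarks you make (linearity of $E_h$ and that it lands in $V_h$) are implicit in the paper's formulation and your justification is sound.
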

\begin{lemma}
	\label{lem:Bprime}
	Consider the statement\\
	
	\textup{\textbf{(B')}} There is a linear operator $E_{h,i}: X_{h,i} \to V_{h,i}$ satisfying
	\[T(E_{h,i} f_{h,i}) = f_{h,i}\,, \quad f_{h,i} \in X_{h,i}\,.\] 
	
	\noindent Assume that \textup{\textbf{(A)}} holds. Then, if \textup{\textbf{(B)}} holds with constants $\kappa_i$, \textup{\textbf{(B')}} holds, with the estimates
	\[\norm{E_{h,i}}_{\mathcal{L}(\mathbb{X},\mathbb{V})} \leq \kappa_i \norm{\Pi_h}_{\mathcal{L}(\mathbb{V})}\,.\]
\end{lemma}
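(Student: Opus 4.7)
The plan is to build $E_{h,i}$ as a right inverse of the restriction $T|_{V_{h,i}} : V_{h,i} \to X_{h,i}$ that selects the element of minimum $\mathbb{V}$-norm among the preimages inside $V_{h,i}$, and then to estimate this norm using \textbf{(B)} together with \Cref{lemDiscreteTraceNorm}.

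First, I would observe that $T|_{V_{h,i}}$ is by construction surjective onto $X_{h,i} = T(V_{h,i})$, and its kernel is $V_{h,i} \cap \mathbb{V}_0$. Since $V_{h,i}$ is finite-dimensional, setting $W_{h,i}$ to be the $(\cdot,\cdot)_{\mathbb{V}}$-orthogonal complement of $V_{h,i} \cap \mathbb{V}_0$ inside $V_{h,i}$, the map $T|_{W_{h,i}}: W_{h,i} \to X_{h,i}$ is a linear isomorphism. I then define $E_{h,i}: X_{h,i} \to V_{h,i}$ as its inverse, so that by construction $T(E_{h,i} f_{h,i}) = f_{h,i}$ for every $f_{h,i} \in X_{h,i}$, which establishes the first part of \textbf{(B')}.

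Next, I would derive the norm bound. Fix $f_{h,i} \in X_{h,i}$ and any $u_{h,i} \in V_{h,i}$ such that $T(u_{h,i}) = f_{h,i}$. Since every element $u_{h,i} - u_{h,i,0}$ with $u_{h,i,0} \in V_{h,i} \cap \mathbb{V}_0$ is another preimage of $f_{h,i}$ in $V_{h,i}$, the minimum-norm selection yields
\[
\norm{E_{h,i} f_{h,i}}_{\mathbb{V}} \leq \min \enstq{\norm{u_{h,i} - u_{h,i,0}}_{\mathbb{V}}}{u_{h,i,0} \in V_{h,i} \cap \mathbb{V}_0}.
\]
Applying \textbf{(B)} to $u_{h,i}$, the right-hand side is at most
\[
\kappa_i \min \enstq{\norm{u_{h,i} - u_{h,0}}_{\mathbb{V}}}{u_{h,0} \in V_h \cap \mathbb{V}_0},
\]
and \Cref{lemDiscreteTraceNorm} (which relies precisely on hypothesis \textbf{(A)}) bounds this last quantity by $\norm{\Pi_h}_{\mathcal{L}(\mathbb{V})} \norm{T(u_{h,i})}_{\mathbb{X}} = \norm{\Pi_h}_{\mathcal{L}(\mathbb{V})} \norm{f_{h,i}}_{\mathbb{X}}$. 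Chaining these three inequalities yields
\[
\norm{E_{h,i} f_{h,i}}_{\mathbb{V}} \leq \kappa_i \norm{\Pi_h}_{\mathcal{L}(\mathbb{V})} \norm{f_{h,i}}_{\mathbb{X}},
\]
which is the announced operator norm estimate.

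The argument is essentially a mechanical chaining of the already established facts, and I do not expect any real obstacle: the only subtle point is to ensure that $E_{h,i}$ is genuinely linear (so that it can be used later as an operator rather than as a mere pointwise selection), which is handled for free by the orthogonal-complement construction in the finite-dimensional space $V_{h,i}$.
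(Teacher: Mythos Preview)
Your proof is correct and follows essentially the same route as the paper: define $E_{h,i}$ as the minimum-$\mathbb{V}$-norm lifting from $X_{h,i}$ into $V_{h,i}$, then chain condition \textbf{(B)} with \Cref{lemDiscreteTraceNorm} to obtain the norm bound. Your explicit orthogonal-complement construction is in fact a bit more careful than the paper's brief assertion that the minimizer defines a linear operator independent of the representative.
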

\begin{proof}
	Let $E_{h,i}$ be the linear operator which maps $f_{h,i} = T(u_{h,i})$ to the minimizer of 
	\[\min \enstq{\norm{u_{h,i} - u_{i,h,0}}_{\mathbb{V}}}{u_{h,i,0} \in \mathbb{V}_0 \cap V_{h,i}}\,.\]
	Notice that this quantity is the one in the left-hand side of condition \textbf{(B)}. Also note that $E_{h,i}$ does not depend on the choice of representative $u_{h,i}$ of $f_{h,i}$ and is indeed a linear operator. We thus have, by combining \textbf{(B)} with \Cref{lemDiscreteTraceNorm}:
	\[\norm{E_{h,i}f_{h,i}}_{\mathbb{V}} \leq \kappa_i \norm{\Pi_h}_{\mathcal{L}(\mathbb{V})} \norm{T(u_{h,i})}_{\mathbb{X}} =  \kappa_i \norm{\Pi_h}_{\mathcal{L}(\mathbb{V})} \norm{f_{h,i}}_{\mathbb{X}}\,.\]
\end{proof}
\begin{proof}[Proof of Theorem \ref{mainthm}]
	The theorem is a consequence of \cite[Thm 2.1]{hiptmair2012stable}, since, by \Cref{cor:Aprime} and \Cref{lem:Bprime} the combination of assumptions \textbf{(A)} and \textbf{(B)} implies \textbf{(A')} and \textbf{(B')}. 
\end{proof}

\begin{proof}[{Proof of \Cref{lem:weakening}}]
	Going inside the proof of \cite[Thm 2.2]{hiptmair2012stable}, one replaces the operator $E_0: X_0 \to V_0$ by the global extension operator $E: X_0 \to V$. In the line of the proof below eq.(2.14) of that reference, we then write
	\begin{align*}
		\sum_{i = 0}^L \norm{\xi_i}^2_D \geq \frac{1}{C_2^2} \sum_{i = 1}^L \norm{E_i \xi_i}^2_{A} + \frac{1}{C_1^2} \norm{E \xi_0}^2_{A} &\geq \min\left(\frac{1}{C_1^2},\frac{1}{C_2^2}\right)\left\{ \norm{\sum_{i = 1}^L E_i \xi_i}^2_{A} + \norm{E\xi_0}^2_A\right\}
		\\
		& \geq \frac{1}{2\max(C_1,C_2)^2} \norm{\sum_{i = 1}^L E_i \xi_i + E \xi_0}^2_A\,.
	\end{align*}
	Te rest of the proof carries over without difficulty. 
\end{proof}

\section{Substructuring estimate in the volume}
\label{sec:proofMain}
%

\begin{proof}[Proof of \Cref{thm:splitVol1}]
	We adapt the approach of \cite[Chap. 5]{toselli2004domain}. Remarking that the spaces $V_{h,0}(\Omega_j)$ are pairwise orthogonal and orthogonal to $\mathbf{V}_h(\Omega \setminus \Gamma)$, it suffices to study the stability of the splitting
	\begin{equation}
		\label{eq:splitHarm}
		\mathbf{V}_{h}(\Omega \setminus \Gamma) = \sum_{\mathcal{F}^k \cap \Gamma = \emptyset }\mathbf{V}_{\mathcal{F}^k} + \sum_{\mathcal{F}^k \subset \Gamma} \sum_{\nu = 1}^2 \mathbf{V}_{\mathcal{F}^k,\nu} + \mathbf{V}_{\mathcal{W}} + V_H(\Omega \setminus \Gamma)\,.
	\end{equation}
	Note that one has indeed $V_{H}(\Omega \setminus \Gamma) \subset \mathbf{V}_h(\Omega \setminus\Gamma)$ since elements of $V_H(\Omega \setminus \Gamma)$ are linear, and thus harmonic, in each $\Omega_j$. Using Poincaré's inequality on $H^1_0(\Omega)$, it suffices to show the stability of the splitting with respect to the norm induced by the bilinear form 
	\[A(u,v) \isdef \sum_{j = 1}^J \int_{\Omega_j} \nabla u \cdot \nabla v\,dx\,,\] 
	i.e., replacing the $H^1(\R^3\setminus \Gamma)$ norms in eq.~\eqref{eq:stableSplitVol} by $A(\cdot,\cdot)^{1/2}$ norms. Hence, in what follows, we study the stability of the splitting in eq.~\eqref{eq:splitHarm} through the following modified norm:
	\begin{equation}
		\label{eq:defNormSplitVolA}
		\vertiii{u_h}_{A}^2 \isdef \inf \enstq{\sum_{i = 0}^N A(u_{h,i},u_ {h,i})}{\sum_{i = 0}^N u_{h,i} = u_h\,,\,\, u_{h,i} \in \mathbf{V}_{i}}\,,
	\end{equation}
	where $\mathbf{V}_0 = V_H(\Omega \setminus \Gamma)$, $\mathbf{V}_1 = \mathbf{V}_{\mathcal{W}}$ and $\mathbf{V}_i$, $i = 2,\ldots, N$ are the discrete harmonic face spaces $\mathbf{V}_{\mathcal{F}^k}$ and $\mathbf{V}_{\mathcal{F}^k,\nu}$, in some order. To this aim, we seek estimates for constants $\theta_h$ and $\Theta_h$ such that 
	\begin{equation}
		\label{eq:thetahThetah}
		\theta_h \norm{u_h}^2_{H^1(\R^3 \setminus \Gamma)} \leq \vertiii{u_h}_{A}^2 \leq \Theta_h \norm{u_h}^2_{H^1(\R^3 \setminus \Gamma)} \quad \forall u_h \in \mathbf{V}_h(\Omega \setminus \Gamma)\,.
	\end{equation}
	For $i = 0,\ldots,N$, define $\mathbf{P}_i: \mathbf{V}_h(\Omega \setminus \Gamma) \to \mathbf{V}_h(\Omega \setminus \Gamma)$ by $\mathbf{P}_i u \in \mathbf{V}_i$ and
	\begin{equation}
		\label{eq:defPi}
		A(\mathbf{P}_i u, v_i) = A(u,v_i)\,, \quad \forall v_i \in \mathbf{V}_i\,.
	\end{equation}
	Let $\mathbf{P}_{\rm ad} = \sum_{i = 0}^N \mathbf{P}_i$. The best possible constants $\theta_h$ and $\Theta_h$ in eq.~\eqref{eq:thetahThetah} are given by $\theta_h = \lambda_{\min}(\mathbf{P}_{\rm ad})$ and $\Theta_h = \lambda_{\max}(P_{\rm ad})$, where $\lambda_{\min}(\mathbf{P}_{\rm ad})$ and $\lambda_{\max}(\mathbf{P}_{\rm ad})$ are the smallest and largest eigenvalues of $\mathbf{P}_{\rm ad}$, (see e.g. \cite[Thm. 16]{oswald1994multilevel}). 
	To bound the eigenvalues of $\mathbf{P}_{\rm ad}$, we follow the general theory of additive Schwarz preconditioning as presented in \cite[Section 2.3]{toselli2004domain}. 
	
	The bound on $\lambda_{\max}(\mathbf{P}_{\rm ad})$ is obtained by a classical coloring argument. More precisely, we show the analogs, in our context, of Assumptions 2.3 and 2.4 in this \cite[Section 2.3]{toselli2004domain}. Suppose that $2 \leq i,j \leq N$ are such that $\mathbf{V}_{i}$ corresponds to the face $\mathcal{F}^k$ and $\mathbf{V}_j$ to the face $\mathcal{F}^\ell$. Define 
	\begin{equation}
		\epsilon_{i,j} \isdef \begin{cases}
			1 & \textup{if } \mathcal{F}^k \textup{ and } \mathcal{F}^\ell \textup{ have a common edge} \\
			0 & \textup{ otherwise}\,. 
		\end{cases}
	\end{equation} 
	One then has the strengthened Cauchy-Schwarz inequality 
	\[A(u_i,v_j) \leq \epsilon_{i,j} A(u_i,u_i)^{1/2} A(v_j,v_j)^{1/2}\,, \quad \forall u_i \in \mathbf{V}_i\,,\,\, v_j \in \mathbf{V}_j\,, \quad 2 \leq i,j \leq N\,.\]
	Indeed, the inequality is the usual Cauchy-Schwarz inequality when $\epsilon_{i,j} = 1$. Conversely, if $\epsilon_{i,j} = 0$, then $\mathcal{F}^k$ and $\mathcal{F}^\ell$ have no edge in common, which implies that no tetrahedral element $\Omega_j$ is incident to both $\mathcal{F}^k$ and $\mathcal{F}^\ell$. It follows that $u_i$ and $v_j$ have disjoint support, hence $A(u_i,v_j) = 0$. 
	
	Let $\mathcal{E} = \{\epsilon_{i,j}\}_{2 \leq i,j \leq N}$ and let $\rho(\mathcal{E})$ be the spectral radius of $\mathcal{E}$. Then, bounding the spectral radius by the $\ell^\infty$-norm of the rows, we have immediately 
	\[\rho(\mathcal{E}) \leq 2N_{\sharp}\]
	where $N_{\sharp}$ is defined as the maximal number of faces $\mathcal{F}^k$ incident to a common edge. 
	
	Noting that \cite[Assumption 2.4]{toselli2004domain} holds with constant $\omega = 1$ here, because the same bilinear form $A$ is used on both sides of the equality in the definition of $\mathbf{P}_i$ in eq.~\eqref{eq:defPi}, we conclude by \cite[Lemma 2.6]{toselli2004domain} (adapting the proof to handle two coarse subspaces instead of one), that 
	\[\lambda_{\max}(\mathbf{P}_{\rm ad}) \leq \omega(\rho(\mathcal{E})+1) \leq 2( N_\sharp+1)\,.\]
	This gives a uniform bound on $\lambda_{\max}(\mathbf{P}_{\rm ad})$ with respect to $H$ and $h$ due to the shape-regularity assumption for the coarse triangulation. 
	
	It remains to prove a lower bound for $\lambda_{\min}(\mathbf{P}_{\rm ad}) \geq c\big((1 + \log H/h)^{-2}\big)$ for some $c > 0$. For this, by \cite[Lemma 2.5]{toselli2004domain}, it suffices to show that, given any $u_h \in \mathbf{V}_h(\Omega \setminus \Gamma)$, there exists a decomposition 
	\begin{equation}
		\label{eq:constraintSum}
		u_h = \sum_{i = 0}^N u_{h,i}\,, \quad u_{h,i} \in \mathbf{V}_i,
	\end{equation}
	such that 	
	\begin{align*}
		\sum_{i = 0}^N A(u_{h,i},u_{h,i}) \leq C(1+ \log H/h)^2 A(u_h,u_h)\,. 
 	\end{align*}
	where here and in the following, the letter $C$ is used to denote a generic constant whose value is independent of the parameters $H$ and $h$. We start with the space $\mathbf{V}_0 = V_H(\Omega \setminus \Gamma)$ and define 
	\[u_{h,0} \isdef \widetilde{I}_H u_h \in V_H(\Omega \setminus \Gamma)\] 
	where $\tilde{I}_H$ may be chosen as any Clément-type quasi-interpolant on the coarse triangulation, with the properties 
	\begin{equation}
		\label{eq:ClementProperties}
		|\tilde{I}_H u|_{H^1(\Omega_j)} \leq C \abs{u}_{H^1(\Omega_j)}\,, \quad \norm{u - \tilde{I}_H u}_{L^2(\Omega_j)} \leq C H \abs{u}_{H^1(\Omega_j)}\,,
	\end{equation}
	for each $j = 1,\ldots,J$. Next, let $i \geq 2$ and suppose that $\mathbf{V}_i$ is associated to the face $\mathcal{F}^k$, shared by the domains $\Omega_\ell$ and $\Omega_m$. For the time being, suppose that $\mathcal{F}^k$ does not belong to $\Gamma$. Let $w_h = u_h - u_{h,0}$, and let $u_{h,i} = \mathcal{H}(\theta_{\mathcal{F}^k} w_h)$. Here, as in \cite{toselli2004domain}, the function $\mathcal{H}(\theta_{\mathcal{F}^k} w)$ is the piecewise discrete harmonic extension of the boundary values of $w_h$ on $\mathcal{F}^k_h$, i.e. the element of $\mathbf{V}_h(\Omega \setminus \Gamma)$ with nodal values equal to $0$ outside $\mathcal{F}^k_h$, and to $u_h(x)$ for $x \in \mathcal{F}^k_h$. Then we have by \cite[Lemma 4.24]{toselli2004domain} and the properties of $\widetilde{I}_H$ above
	\begin{align*}
		A(u_{h,i},u_{h,i}) &\leq C(1 + \log H/h)^2 (\abs{w_h}^2_{H^1(\Omega_k)} + \frac{\norm{w_h}^2_{L^2(\Omega_k)}}{H^2} + \abs{w_h}^2_{H^1(\Omega_\ell)} + \frac{\norm{w_h}^2_{L^2(\Omega_\ell)}}{H^2})\\
		&\leq C(1 + \log H/h)^2 \abs{u_h}^2_{H^1(\Omega_k \cup \Omega_\ell)}\,. 
	\end{align*} 
	The case where $\mathcal{F}^j \subset \Gamma$ is similar, the only difference being that the discrete harmonic extension is only non-zero in one of the two domains $\Omega_k$ and $\Omega_\ell$ incident to $\mathcal{F}^j$. 
	Summing those inequalities for $2 \leq i \leq N$ and using that each tetrahedron $\Omega_j$ has only $4$ faces,  
	\[\sum_{i = 2}^N A(u_{h,i},u_{h,i}) \leq C(1 + \log H/h)^2 A(u_h,u_h)\,.\]
	
	It remains to handle the contribution from the wire-basket, which, in order to respect the constraint of eq.~\eqref{eq:constraintSum}, must be defined by  
	\[u_{h,1} := u_h - u_{h,0} - \sum_{i = 2}^N u_{h,i}\,.\]
	Since the functions $u_{h,i}$, $i = 2,\ldots,N$, vanish at the wire-basket, one has in fact $u_{h,1} = \mathcal{H}(\theta_{\mathcal{W}} w_h)$, where, as before, $\mathcal{H}(\theta_{\mathcal{W}} w_h)$ is the discrete piecewise-harmonic function with boundary values matching those of $w_h$ on the wire-basket generalized vertices, and $0$ on the face generalized vertices. We write 
	\[A(u_{h,1},u_{h,1}) = \sum_{j = 1}^J \abs{\mathcal{H}_j(\theta_{\mathcal{W}_j} w_h)}^2_{H^1(\Omega_j)}\,,\]
	where $\mathcal{H}_j(\theta_{\mathcal{W}_j} w)$ is the discrete harmonic extension in $V_h(\Omega_j)$ of the boundary values of $w_h$ on the wire-basket $\mathcal{W}_j$ of $\Omega_j$. By \cite{toselli2004domain}, [Lemma 4.19] and [Lemma 4.16] (arguments in this order)
	\[ \abs{\mathcal{H}_j(\theta_{\mathcal{W}_j} w_h)}^2_{H^1(\Omega_j)} \leq C \norm{w_h}^2_{L^2(\mathcal{W}_i)} \leq C(1 + \log H/h)^2 \norm{w_h}^2_{H^1(\Omega_i)}\,.\]
	We obtain the lower bound on $\lambda_{\min}(\mathbf{P}_{\rm ad})$ using again the properties in eq.~\eqref{eq:ClementProperties}. This concludes the proof of the Theorem. 
\end{proof}

\end{document}